\theoremstyle{plain}
\newtheorem{thm}{\protect\theoremname}
\newtheorem{thm}{\protect\theoremname}[section]
\theoremstyle{remark}
\newtheorem{rem}{\protect\remarkname}
\newtheorem{rem}{\protect\remarkname}[section]
\theoremstyle{plain}
\newtheorem{lem}{\protect\lemmaname}
\newtheorem{lem}{\protect\lemmaname}[section]
\providecommand{\lemmaname}{Lemma}
\providecommand{\remarkname}{Remark}
\providecommand{\theoremname}{Theorem}
\begin{document}

\begin{frontmatter}{}

\title{Stability and error analysis of the SAV schemes for the inductionless MHD equations}

\author[zzu,zzu1]{Xiaodi Zhang}

\ead{zhangxiaodi@lsec.cc.ac.cn}

\address[zzu]{Henan Academy of Big Data, Zhengzhou University, Zhengzhou 450052,
China.}

\address[zzu1]{School of Mathematics and Statistics, Zhengzhou University, Zhengzhou
450001, China.}

\author[xju]{Xianghai Zhou}
\ead{zxhmath166@163.com}

\address[xju]{College of Mathematics and System Sciences,
	Xinjiang University, Urumqi 830046, China.}

\begin{abstract}
In this paper, we consider numerical approximations for solving the
inductionless magnetohydrodynamic (MHD) equations. By utilizing the
scalar auxiliary variable (SAV) approach for dealing with the convective and coupling terms, we propose some first- and second-order
schemes for this system. These schemes are linear, decoupled, unconditionally
energy stable, and only require solving a sequence of differential
equations with constant coefficients at each time step. We further
derive a rigorous error analysis for the first-order scheme, establishing
optimal convergence rates for the velocity, pressure, current density
and electric potential in the two-dimensional case. Numerical examples
are presented to verify the theoretical findings and show the performances
of the schemes.
\end{abstract}
\begin{keyword}
inductionless MHD equations, SAV, energy stable, decoupled,
error estimates 
\end{keyword}

\end{frontmatter}{}

\section{Introduction}

The incompressible MHD describes the dynamic behavior
of an electrically conducting fluid under the influence of a magnetic
field. It has been widely used in many science and engineering applications,
such as liquid metal cooling for nuclear reactors, and sustained plasma
confinement for controlled thermonuclear fusion, see \citep{Abdou01,Dav01,Gerbeau06}.
Mathematically, the most frequently used model is obtained by coupling
the Navier-Stokes equations for hydrodynamics with the Maxwell equations
for electromagnetism. However, in most terrestrial applications, the
magnetic Reynolds number of MHD flows is small. Consequently, the
magnetic induction can usually be negligible compared with the external
magnetic field and the electric field is considered to be quasi-static.
The MHD equations with this simplification are referred as the inductionless
MHD equations. Much effort has been spent on theoretical analysis
and mathematical modeling of the inductionless MHD equations, see
\citep{Badia14,Zhang2021} and the references therein. 

Let $\Omega\subset\mathbb{R}^{d},\,d=2,3$ be a bounded domain with
Lipschitz-continuous boundary $\Gamma\coloneqq\partial\Omega$. In
this paper, we consider the incompressible inductionless MHD equations
as follows,
\begin{subequations}
\begin{align}
\boldsymbol{u}_{t}-R_{e}^{-1}\Delta\boldsymbol{u}+\boldsymbol{u}\cdot\nabla\boldsymbol{u}+\nabla p-\kappa\boldsymbol{J}\times\boldsymbol{B} & =\boldsymbol{0}\quad\text{ in }\Omega\times(0,T],\label{eq:modelu}\\
\mathrm{div}\boldsymbol{u} & =0\quad\text{ in }\Omega\times(0,T],\label{eq:modeldivu}\\
\boldsymbol{J}+\nabla\phi-\boldsymbol{u}\times\boldsymbol{B} & =\boldsymbol{0}\quad\text{ in }\Omega\times(0,T],\label{eq:modelJ}\\
\mathrm{div}\boldsymbol{J} & =0\quad\text{ in }\Omega\times(0,T],\label{eq:modeldivJ}
\end{align}
\label{eq:modelmhd}
\end{subequations}

\noindent where $T>0$ is the terminal time, \textbf{$\boldsymbol{u}$
}is the fluid velocity, $p$ the hydrodynamic pressure, $\boldsymbol{J}$
the current density, $\phi$ the electric potential, $R_{e}$ is the
fluid Reynolds number, $\kappa$ is the coupling number. Function
$\boldsymbol{B}$ is the applied magnetic field which is assumed to
be given. The system is considered in conjunction with the following
initial and boundary conditions,
\begin{align}
\boldsymbol{u}(x,0)=\boldsymbol{u}^{0}(x) & \text{ in }\Omega,\label{eq:init}\\
\boldsymbol{u}=\boldsymbol{0},\quad\boldsymbol{J}\cdot\boldsymbol{n}=0 & \text{ on }\Gamma\times(0,T],\label{eq:bdy}
\end{align}
where the initial value satisfies $\nabla\cdot\boldsymbol{u}_{0}=0$
and $\boldsymbol{n}$ is the unit outer normal vector on $\Gamma$. 

Numerical solving the inductionless MHD model has drawn a considerable
amount of attention. Since it replaces Maxwell\textquoteright s equations
with Poisson\textquoteright s equation for the electric potential,
numerical solution is more economic compared with the full MHD model.
In particular, the inductionless MHD model is further simplified as
the reduced MHD model by eliminating the current density $\boldsymbol{J}$.
In \citep{Yuk12,Yuk15}, Yuksel et al. studied the error analysis
for both semi-discrete and fully discrete finite element approximate
of the reduced MHD model. In \citep{Ni071,Ni072,Ni12}, Ni and his
collaborators studied a class of consistent and charge-conservative
finite volume scheme for the inductionless MHD equations on both structured
and unstructured meshes. In \citep{Pla11}, Planas et al. proposed
a stabilized finite element method to solve the inductionless MHD
problem, in which the time discretization is based on the Backward-Euler
method. In 2019, Li et al. \citep{Li19} proposed a fully discrete
and charge-conservative finite element method and provided a plain
convergence analysis. Later, Long further presented optimal error
estimates for both the semi-discrete and full-discrete scheme \citep{Long19}.
Most of the schemes mentioned above are coupled-type where they need to assemble
and solve a multi-physics and large system at each time step. Thus,
it could be computationally expensive in numerical computation, especially
in three dimensions. 

To address this issue, some decoupled methods have been investigated
attractive in the literature. Since decoupled methods usually solve
the coupled problem by successively solving the sub-physics problems
at each step and many efficient solvers can be used for each of them.
However, decoupled methods are more computationally economical but may
lose some stability. Thus, it is desirable to design decoupled methods
while preserving the energy stability, in the sense that the discrete
energy dissipation laws hold. For the reduced MHD model, Layton et
al. \citep{Layton2013} introduced two partitioned methods and studied
the stability, where the first order method is shown to be unconditionally
stable and the second order method is shown to be conditionally stable.
In 2021, Zhang et al. \citep{Zhang2020} proposed and analyzed a decoupled,
unconditionally energy stable and charge-conservative finite element
method. By adding a first-order stabilized term to the magnetic problem
and making some subtle implicit-explicit treatments for coupling terms,
their scheme can decouple the computation of the magnetic problem
from the fluid problem while ensuring the energy stability. In these
works, the nonlinear terms are treated either implicitly or semi-implicitly,
so that one needs to solve a nonlinear system or some linear systems with
variable coefficients at each time step. Thus, it is desirable to
be able to treat the nonlinear term explicitly while maintaining energy
stability. With such treatment, the schemes only require the solution
of linear systems with constant coefficients upon discretization,
and thus are very efficient and popular for dynamical simulations. 

Recently, SAV based schemes have gained much attention recently due
to their efficiency, flexibility and accuracy. The SAV approach was
first studied in \citep{Shen2018,Shen2018a} to construct efficient
schemes for gradient flows. Nowadays, it has been a powerful approach to develop energy stable numerical schemes for general dissipative systems,
such as Navier-Stokes equations \citep{Lin2019,Li2020b}, magnetohydrodynamic
equations \citep{Li2021,Yang2021m} and Cahn-Hilliard-Navier-Stokes
equations \citep{Yang2021c,Li2020a}. Based on an auxiliary variable
associated with the total system energy, $q(t)=\sqrt{{\rm E}_{\text{NS}}(t)}$
with ${\rm E}_{\text{NS}}(t)=\frac{1}{2}\left\Vert \boldsymbol{u}\right\Vert ^{2}$,
Dong et al. \citep{Lin2019} constructed a numerical scheme for the
NS equations. Within each time step, the scheme involves the computations
of two generalized Stokes equations with constant coefficient matrices,
together with a nonlinear algebraic equation a for the auxiliary variable.
To address the theoretical and practical issues form the nonlinear
algebraic equation, Li et al. \citep{Li2020b} proposed and analyzed
some first- and second-order pressure correction schemes using the
SAV approach for the NS equations, where $q(t)=\exp(-t/T)$. Later,
they extend the proposed approach and theoretical findings to the
MHD equations in \citep{Li2021} and the Cahn-Hilliard-Navier-Stokes
system in \citep{Li2020a} for dealing with the nonlinear and coupling
terms that satisfy the \textquotedblleft zero-energy-contribution\textquotedblright{}
feature. Meanwhile, Yang \citep{Yang2021c,Yang2021e} designed a series
of linear and energy stable schemes for the flow-coupled phase-field
models. In these works, the auxiliary variable for the NS equations
is simpler, $q(t)=1$. Following the same idea, this approach was
later extended to devise fully decoupled finite element schemes
for the MHD equations in \citep{Yang2021m,Zhang2022a}. 

The purpose of this paper is to propose and analyze some SAV schemes for the inductionless MHD equations. By utilizing the SAV approach
for dealing with the convective term and coupling terms, some first-
and second-order schemes are constructed for this system. These schemes
are linear, decoupled, unconditionally energy stable, and only require
solving a sequence of differential equations with constant coefficients
at each time step. Thus, they are very efficient and easy to implement.
We further establish rigorous unconditional energy stability and error
analysis for the first-order scheme in the two-dimensional case. Some
numerical experiments are provided to confirm the predictions of the
theory and demonstrate the efficiency of the proposed schemes.

While the construction of the SAV schemes for the inductionless MHD
equations is quite straightforward, it is much more difficult to carry
out the error analysis as we have to deal with the issues due to the
non-local coupling between the SAV and other variables, and the explicit
treatment of the convective terms and coupling terms. It is also remarkable
that while the error analysis is somewhat similar to the ones in \citep{Li2020b,Li2021},
the extension of error analysis for the inductionless MHD equations
is still non-trivial. On the one hand, compared to the Navier-Stokes
equations, the error analysis for the inductionless MHD equations
is much more involved due to the coupling terms. On the other hand,
compared to the full MHD equations, the inductionless MHD equations
is a hybrid system. More precisely, the
fluid problem is unsteady while the electromagnetic problem is steady.
The lack of the derivative term to time in electromagnetic
problem makes the error analysis more complicated and tough. Therefore,
more delicate analyses are needed for the error estimates. 

The paper is organized as follows. In Section \ref{sec:Pre}, we introduce
some notations and present the energy estimate for the inductionless
MHD equations. In Section \ref{sec:Schemes}, we propose the SAV schemes
and prove the unconditional stability. In Section \ref{sec:Error},
we carry out a rigorous error analysis for the first-order scheme
in the two-dimensional case. In Section \ref{sec:Num}, we present
some numerical experiments. In Section \ref{sec:Conclud}, we conclude
with a few remarks.

\section{Preliminaries\label{sec:Pre}}

We begin with introducing some notations and Sobolev spaces. For all $1\le q\le \infty$, $L^{q}(\Omega)$ for the $q$-integrable function space with the norm $\left\Vert\cdot\right\Vert_{0,q}$. Particularly, $L^{2}(\Omega)$ is equipped with the inner product
$(\cdot,\cdot)$ and norm $\left\Vert \cdot\right\Vert $. The subspace
of $L^{2}(\Omega)$ with zero mean value over $\Omega$ is further denoted
as $L_{0}^{2}(\Omega)$. For all $m\in\mathbb{N}^{+},1\le q\le\infty$,
let $W^{m,q}(\Omega)$ denote the standard Sobolev space equipped
with the standard Sobolev norm $\left\Vert \cdot\right\Vert _{m,q}$.
For $q=2$, we write $H^{m}(\Omega)$ for $W^{m,2}(\Omega)$ and its
corresponding norm is $\left\Vert \cdot\right\Vert _{m}$. Let $\boldsymbol{H}({\rm div},\Omega)$
be the subspace of $\boldsymbol{L}^{2}(\Omega)$ with square integrable
divergence, the norm is defined by $\left\Vert \cdot\right\Vert _{{\rm div}}$.
Spaces $H_{0}^{1}(\Omega)$ and $\boldsymbol{H}_{0}(\mathrm{div},\Omega)$
denote their sub-spaces with vanishing traces and vanishing normal
traces on $\Gamma$, respectively. For a given Sobolev space $X$,
we write $L^{q}(0,T;X)$ for the Bochner space and its norm is written
by $\left\Vert \cdot\right\Vert _{L^{q}(0,T;X)}$. Here and what follows,
we use $C$ to denote generic positive constants independent of the
discretization parameters, which may take different values at different places.

For convenience, we introduce some notations for function spaces 
\[
\boldsymbol{X}:=\boldsymbol{H}_{0}^{1}(\Omega),\quad\boldsymbol{V}:=\{\boldsymbol{v}\in\boldsymbol{X}:\nabla\cdot\boldsymbol{v}=0\},\quad Y=L_{0}^{2}(\Omega),\quad\boldsymbol{D}:=\boldsymbol{H}_{0}({\rm div},\Omega),\quad S=L_{0}^{2}(\Omega).
\]
We will use the following trilinear form, 
\[
b(\boldsymbol{u},\boldsymbol{v},\boldsymbol{w})=\left(\boldsymbol{u}\cdot\nabla\boldsymbol{v},\boldsymbol{w}\right)\quad\forall\boldsymbol{u},\boldsymbol{v},\boldsymbol{w}\in\boldsymbol{X}.
\]
It is easy to see that the trilinear form $b(\cdot,\cdot,\cdot)$
is a skew-symmetric with respect to its last two arguments,
\begin{equation}
b(\boldsymbol{u},\boldsymbol{v},\boldsymbol{w})=-b(\boldsymbol{u},\boldsymbol{w},\boldsymbol{v})\quad\forall\boldsymbol{u}\in\boldsymbol{V},\quad\boldsymbol{v},\boldsymbol{w}\in\boldsymbol{X},\label{eq:e_skew-symmetric1}
\end{equation}
and 
\begin{equation}
b(\boldsymbol{u},\boldsymbol{v},\boldsymbol{v})=0\quad\forall\boldsymbol{u}\in\boldsymbol{V},\quad\boldsymbol{v}\in\boldsymbol{X}.\label{eq:e_skew-symmetric2}
\end{equation}

Now, we are in a position to establish the energy estimate for the
inductionless MHD system. By taking the $\boldsymbol{L}^{2}$-inner
product of $\boldsymbol{u}$ with (\ref{eq:modelu}) and using the
integration by parts and (\ref{eq:modeldivu}), we get
\begin{equation}
\frac{1}{2}\frac{d}{dt}\left\Vert \boldsymbol{u}\right\Vert ^{2}+R_{e}^{-1}\left\Vert \nabla\boldsymbol{u}\right\Vert ^{2}+\kappa(\boldsymbol{J}\times\boldsymbol{B},\boldsymbol{u})=0.\label{eq:energyu}
\end{equation}
By taking the $\boldsymbol{L}^{2}$-inner product of $\kappa\boldsymbol{J}$
with (\ref{eq:modelJ}) and using the integration by parts and (\ref{eq:modeldivJ}),
we have
\begin{equation}
\kappa\left\Vert \boldsymbol{J}\right\Vert ^{2}-\kappa(\boldsymbol{u}\times\boldsymbol{B},\boldsymbol{J})=0.\label{eq:energyJ}
\end{equation}
By combining (\ref{eq:energyu})-(\ref{eq:energyJ}) and using (\ref{eq:e_skew-symmetric2}),
we obtain the law of energy dissipation that reads as,
\begin{equation}
\frac{d}{dt}\mathrm{E}(t)=-R_{e}^{-1}\left\Vert \nabla\boldsymbol{u}\right\Vert ^{2}-\kappa\left\Vert \boldsymbol{J}\right\Vert ^{2}\quad\text{with}\quad\mathrm{E}(t)=\frac{1}{2}\left\Vert \boldsymbol{u}\right\Vert ^{2}.\label{eq:energy}
\end{equation}
The energy law describes the variation of the total energy caused
by energy conversion. Since the inducted magnetic field is neglected
and the electric field is considered to be quasi-static, the total
energy $\mathrm{E}$ only consists of the fluid kinetic energy $\frac{1}{2}\left\Vert \boldsymbol{u}\right\Vert ^{2}$.
The dissipation of $\mathrm{E}$ stems from the friction losses $R_{e}^{-1}\left\Vert \nabla\boldsymbol{u}\right\Vert ^{2}$
and the Ohmic losses $\kappa\left\Vert \boldsymbol{J}\right\Vert ^{2}$.
The above proof to obtain the law of energy dissipation (\ref{eq:energy})
lies on the following two identities,
\begin{equation}
\left(\boldsymbol{u}\cdot\nabla\boldsymbol{u},\boldsymbol{u}\right)=0,\quad\kappa\left(\boldsymbol{J}\times\boldsymbol{B},\boldsymbol{u}\right)-\kappa\left(\boldsymbol{u}\times\boldsymbol{B},\boldsymbol{J}\right)=0.\label{eq:zero}
\end{equation}
These two equities can be regarded as the contribution of two types
of coupling terms to the total free energy of the system is zero.
These unique \textquotedblleft zero-energy-contribution\textquotedblright{}
property will be used to design decoupling type numerical schemes.

\section{The SAV schemes\label{sec:Schemes} }

In this section, we first reformulate the inductionless MHD model
into an equivalent system with SAV. Then, we construct first- and
second-order SAV schemes and prove that they are unconditionally energy
stable.

Let $\left\{ t^{n}=n\tau:\,n=0,1,\cdots,N\right\} $, $\tau=T/N$,
be an equidistant partition of the time interval $[0,T].$ We denote
$(\cdot)^{n}$ as the variable $(\cdot)$ at time step $n.$ For any
function $v(x,t)$, define 
\[
\delta_{t}v^{n+1}=\frac{v^{n+1}-v^{n}}{\tau},\quad\delta_{t}^{2}v^{n+1}=\frac{3v^{n+1}-4v^{n}+v^{n-1}}{2\tau},\quad\hat{v}^{n+1}=2v^{n}-v^{n-1}
\]
In particular, when $n=0$, we denote 
\begin{equation}
\delta_{t}^{2}v^{1}=\frac{v^{1}-v^{0}}{\tau},\quad\hat{v}^{1}=v^{0}.\label{eq:BDF0}
\end{equation}

\subsection{Reformulated system\label{sec:Reform}}

Inspired by \citep{Li2020b}, we introduce a scalar auxiliary variable
$q(t)$,
\begin{equation}
q(t)=\exp\left(-\frac{t}{T}\right).\label{eq:SAV}
\end{equation}
Note that $q(t)$ is a scalar-valued number, not a field function.
This function will serve as the scalar auxiliary variable. By
taking the derivative of (\ref{eq:SAV}) with respect to $t$, we
obtain
\[
\frac{\mathrm{d}q}{\mathrm{d}t}=-\frac{q}{T}.
\]
Observed that this a linear and dissipative ordinary differential
equation for the scalar auxiliary variable. This feature is vital
for designing unconditionally energy-stable and linear schemes. 

In light of equation $q\exp\left(\frac{t}{T}\right)=1$ and (\ref{eq:zero}),
we rewrite the system (\ref{eq:modelmhd}) into the following form
\begin{subequations}
\begin{align}
\boldsymbol{u}_{t}-R_{e}^{-1}\Delta\boldsymbol{u}+\nabla p+q\exp\left(\frac{t}{T}\right)\left(\boldsymbol{u}\cdot\nabla\boldsymbol{u}-\kappa\boldsymbol{J}\times\boldsymbol{B}\right) & =\boldsymbol{0},\label{modeleq:u}\\
\mathrm{div}\boldsymbol{u} & =0,\label{modeleq:divu}\\
\boldsymbol{J}+\nabla\phi-q\exp\left(\frac{t}{T}\right)\boldsymbol{u}\times\boldsymbol{B} & =\boldsymbol{0},\label{modeleq:J}\\
\mathrm{div}\boldsymbol{J} & =0,\label{modeeq:divJ}\\
\frac{\mathrm{d}q}{\mathrm{dt}}+\frac{q}{T}-\exp\left(\frac{t}{T}\right)\left(\left(\boldsymbol{u}\cdot\nabla\boldsymbol{u},\boldsymbol{u}\right)-\kappa\left(\boldsymbol{u}\times\boldsymbol{B},\boldsymbol{J}\right)-\kappa\left(\boldsymbol{J}\times\boldsymbol{B},\boldsymbol{u}\right)\right) & =0,\label{mdoeleq:q}
\end{align}
\label{eq:SAVmodel}
\end{subequations}
Note that in the reformulated system, $q(t)$ is treated as an approximation
of $\exp\left(-\frac{t}{T}\right)$ and is computed by solving this
system of equations, not by using equation (\ref{eq:SAV}). The initial
condition for $q(t)$ is set as $q(0)=1$. The last term in the equation
for $q$ of (\ref{mdoeleq:q}) is added to balance the nonlinear term
and coupling term in (\ref{eq:SAVmodel}) in the discretized case.
We next focus on this reformulated system, and present unconditionally
energy-stable schemes for this system.

\medskip{}

\begin{thm}
The reformulated system (\ref{eq:SAVmodel}) admits the following
law of energy dissipation,
\begin{equation}
\frac{d}{dt}\mathrm{E}_{\text{SAV}}(t)=-R_{e}^{-1}\left\Vert \nabla\boldsymbol{u}\right\Vert ^{2}-\kappa\left\Vert \boldsymbol{J}\right\Vert ^{2}-\frac{1}{T}\left|q\right|^{2}\quad\text{with}\quad\mathrm{E}_{\text{SAV}}(t)=\frac{1}{2}\left\Vert \boldsymbol{u}\right\Vert ^{2}+\frac{1}{2}\left|q\right|^{2}.\label{eq:SAVenergy}
\end{equation}
\end{thm}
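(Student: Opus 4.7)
The plan is to mimic the derivation of the continuous energy law (\ref{eq:energy}) carried out earlier in Section \ref{sec:Pre}, but now accounting for the extra SAV contributions. I will test the three dynamic equations of the reformulated system against appropriate multipliers, add them, and observe that the "zero-energy-contribution" identities (\ref{eq:zero}) ensure that the convective and coupling terms cancel exactly after we add the scalar ODE for $q$.

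Concretely, first I would take the $\boldsymbol{L}^{2}$-inner product of (\ref{modeleq:u}) with $\boldsymbol{u}$. Using $\mathrm{div}\,\boldsymbol{u}=0$ from (\ref{modeleq:divu}) together with the homogeneous Dirichlet condition on $\boldsymbol{u}$ kills the pressure term, $(\nabla p,\boldsymbol{u})=0$, and yields
\begin{equation*}
\tfrac{1}{2}\tfrac{d}{dt}\|\boldsymbol{u}\|^{2}+R_{e}^{-1}\|\nabla\boldsymbol{u}\|^{2}+q\exp(t/T)\bigl[(\boldsymbol{u}\cdot\nabla\boldsymbol{u},\boldsymbol{u})-\kappa(\boldsymbol{J}\times\boldsymbol{B},\boldsymbol{u})\bigr]=0.
\end{equation*}
Next I would take the $\boldsymbol{L}^{2}$-inner product of (\ref{modeleq:J}) with $\kappa\boldsymbol{J}$; integration by parts combined with $\mathrm{div}\,\boldsymbol{J}=0$ and $\boldsymbol{J}\cdot\boldsymbol{n}=0$ eliminates the potential term $(\nabla\phi,\boldsymbol{J})=0$ and produces
\begin{equation*}
\kappa\|\boldsymbol{J}\|^{2}-\kappa q\exp(t/T)(\boldsymbol{u}\times\boldsymbol{B},\boldsymbol{J})=0.
\end{equation*}
Finally, I would multiply the scalar ODE (\ref{mdoeleq:q}) by $q$ itself to obtain
\begin{equation*}
\tfrac{1}{2}\tfrac{d}{dt}|q|^{2}+\tfrac{1}{T}|q|^{2}-q\exp(t/T)\bigl[(\boldsymbol{u}\cdot\nabla\boldsymbol{u},\boldsymbol{u})-\kappa(\boldsymbol{u}\times\boldsymbol{B},\boldsymbol{J})-\kappa(\boldsymbol{J}\times\boldsymbol{B},\boldsymbol{u})\bigr]=0.
\end{equation*}

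Adding the three identities, every term carrying the prefactor $q\exp(t/T)$ appears with opposite sign in the $q$-equation and thus cancels by inspection — this is exactly the design purpose of the extra term added to (\ref{mdoeleq:q}). What remains is precisely the claimed dissipation law (\ref{eq:SAVenergy}). Note that \textbf{no} use of the skew-symmetry identity (\ref{eq:e_skew-symmetric2}) is needed at the continuous level once the SAV balance term is in place; the cancellation is algebraic rather than structural, which is exactly the feature that will later allow the convective/coupling nonlinearities to be treated fully explicitly in the time-discrete schemes.

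There is no real obstacle: the argument is a direct energy test, and the only small points of care are (i) verifying that both $(\nabla p,\boldsymbol{u})$ and $(\nabla\phi,\boldsymbol{J})$ vanish under the stated boundary and divergence-free conditions, and (ii) tracking signs so that the three copies of $q\exp(t/T)(\boldsymbol{u}\cdot\nabla\boldsymbol{u},\boldsymbol{u})$, $\kappa q\exp(t/T)(\boldsymbol{J}\times\boldsymbol{B},\boldsymbol{u})$, and $\kappa q\exp(t/T)(\boldsymbol{u}\times\boldsymbol{B},\boldsymbol{J})$ cancel cleanly after summation.
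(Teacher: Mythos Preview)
Your proposal is correct and follows essentially the same approach as the paper: test (\ref{modeleq:u}) with $\boldsymbol{u}$, test (\ref{modeleq:J}) with $\kappa\boldsymbol{J}$, multiply (\ref{mdoeleq:q}) by $q$, and sum so that the $q\exp(t/T)$-weighted convective and coupling terms cancel. Your additional remark that the cancellation is algebraic (not relying on (\ref{eq:e_skew-symmetric2})) is a correct and useful observation about the SAV mechanism.
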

\begin{proof}
Taking the $\boldsymbol{L}^{2}$-inner product of $\boldsymbol{u}$
with (\ref{modeleq:u}), using the integration by parts and (\ref{modeleq:divu}),
we obtain 
\begin{equation}
\frac{1}{2}\frac{d}{dt}\left\Vert \boldsymbol{u}\right\Vert ^{2}+R_{e}^{-1}\left\Vert \nabla\boldsymbol{u}\right\Vert ^{2}+q\exp\left(\frac{t}{T}\right)\left(\boldsymbol{u}\cdot\nabla\boldsymbol{u},\boldsymbol{u}\right)+q(t)\exp\left(\frac{t}{T}\right)\kappa(\boldsymbol{J}\times\boldsymbol{B},\boldsymbol{u})=0.\label{eq:SAVenergyu}
\end{equation}
Taking the $\boldsymbol{L}^{2}$-inner product of $\kappa\boldsymbol{J}$
with (\ref{modeleq:J}), using the integration by parts and (\ref{modeeq:divJ}),
we obtain 
\begin{equation}
\kappa\left\Vert \boldsymbol{J}\right\Vert ^{2}-q\exp\left(\frac{t}{T}\right)\kappa(\boldsymbol{u}\times\boldsymbol{B},\boldsymbol{J})=0.\label{eq:SAVenergyJ}
\end{equation}
Multiplying $q$ with (\ref{mdoeleq:q}) leads to
\begin{equation}
\frac{1}{2}\frac{\mathrm{d}}{\mathrm{dt}}\left|q\right|^{2}+\frac{1}{T}\left|q\right|^{2}-q\exp\left(\frac{t}{T}\right)\left(\left(\boldsymbol{u}\cdot\nabla\boldsymbol{u},\boldsymbol{u}\right)-\kappa\left(\boldsymbol{u}\times\boldsymbol{B},\boldsymbol{J}\right)-\kappa\left(\boldsymbol{J}\times\boldsymbol{B},\boldsymbol{u}\right)\right)=0.\label{eq:SAVenergyq}
\end{equation}
By combining (\ref{eq:SAVenergyu})-(\ref{eq:SAVenergyq}), we derive
(\ref{eq:SAVenergy}).
\end{proof}
\medskip{}

\begin{rem}
In this paper, the scalar auxiliary variable is only a time-dependent
function $q(t)=\exp(-t/T)$ not a energy-related function. With this
treatment, the algebraic equation for the scalar auxiliary variable
is linear and uni-solvent. Moreover, the scalar auxiliary variable of this type admits a general form, $q(t)=C_{q,0}\exp(-C_{q,1}t/T)$
with $C_{q,0}\ne0$ and $C_{q,1}\ge0$. We refer to \citep{Zhang2022}
for more details about this extension.
\end{rem}

\subsection{First-order scheme}

A first-order scheme for solving the system (\ref{eq:SAVmodel}) can
be readily derived by the backward Euler method. For all $n\ge0$,
we compute $\left(\boldsymbol{u}^{n+1},p^{n+1},\boldsymbol{J}^{n+1},\phi^{n+1}\right)$
by solving
\begin{subequations}
\begin{align}
\delta_{t}\boldsymbol{u}^{n+1}-R_{e}^{-1}\Delta\boldsymbol{u}^{n+1}+\nabla p^{n+1}+q^{n+1}\exp\left(\frac{t^{n+1}}{T}\right)\left(\boldsymbol{u}^{n}\cdot\nabla\boldsymbol{u}^{n}-\kappa\boldsymbol{J}^{n}\times\boldsymbol{B}^{n+1}\right) & =\boldsymbol{0},\label{weakh:u}\\
\mathrm{div}\boldsymbol{u}^{n+1} & =0,\label{weakh:divu}\\
\boldsymbol{J}^{n+1}+\nabla\phi^{n+1}-q^{n+1}\exp\left(\frac{t^{n+1}}{T}\right)\left(\boldsymbol{u}^{n}\times\boldsymbol{B}^{n+1}\right) & =\boldsymbol{0},\label{weakh:J}\\
\mathrm{div}\boldsymbol{J}^{n+1} & =0,\label{weakh:divJ}\\
\delta_{t}q^{n+1}+\frac{q^{n+1}}{T}-\exp\left(\frac{t^{n+1}}{T}\right)\left(\left(\boldsymbol{u}^{n}\cdot\nabla\boldsymbol{u}^{n},\boldsymbol{u}^{n+1}\right)-\kappa\left(\boldsymbol{u}^{n}\times\boldsymbol{B}^{n+1},\boldsymbol{J}^{n+1}\right)-\kappa\left(\boldsymbol{J}^{n}\times\boldsymbol{B}^{n+1},\boldsymbol{u}^{n+1}\right)\right) & =0.\label{weakh:q}
\end{align}
\label{eq:SAVEuler}
\end{subequations}

\begin{rem}
\label{rem:init} The init data $\boldsymbol{J}^{0}$ is obtained
by solving (\ref{eq:modelJ})-(\ref{eq:modeldivJ}) at $t=0$. Namely, $\boldsymbol{J}^{0}$ is a part of the solution to 
\begin{subequations}
	\begin{align}
		\boldsymbol{J}^{0}+\nabla\phi^{0}-\boldsymbol{u}^0\times\boldsymbol{B}^{0} & =\boldsymbol{0},\label{Init:J}\\
		\mathrm{div}\boldsymbol{J}^{0} & =0.\label{Init:divJ}
	\end{align}
	\label{eq:Init}
\end{subequations}
A more reliable way is to compute the system at $t = t_1$ using a coupled scheme \citep{Li19} or decoupled scheme \citep{Zhang2020} to obtain $\left(\boldsymbol{u}^{1},p^1,\boldsymbol{J}^1,\phi^1\right)$. Then the proposed scheme is implemented from $t = t_2$ and initialized by the solution at $t = t_1$.
\end{rem}
\medskip{}

\begin{rem}
It is worth noting that the function $\boldsymbol{B}$ is a given external magnetic field in this paper. In the numerical scheme, the symbol $\boldsymbol{B}^{n+1}$ used only indicates that its value may change with time, not that it must be computed within. This remark also applies to the second-order scheme \eqref{eq:SAVAB}.
\end{rem}
\medskip{}

First of all, we prove the unconditionally energy stability of the
scheme as follows.
\begin{thm}
\label{thm:SeEnergyLaws} The scheme (\ref{eq:SAVEuler}) is unconditionally
energy stable in the sense that the following energy estimate
\begin{equation}
\delta_{t}\mathrm{E}_{\text{EL}}^{n+1}\le-R_{e}^{-1}\left\Vert \nabla\boldsymbol{u}^{n+1}\right\Vert ^{2}-\kappa\left\Vert \boldsymbol{J}^{n+1}\right\Vert ^{2}-\frac{1}{T}\left|q^{n+1}\right|^{2}\quad\forall n\ge0,\label{eq:SemiEn}
\end{equation}
 holds, where $\mathrm{E}_{\text{EL}}^{n+1}:=\frac{1}{2}\left\Vert \boldsymbol{u}^{n+1}\right\Vert ^{2}+\frac{1}{2}\left|q^{n+1}\right|^{2}.$
\end{thm}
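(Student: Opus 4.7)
The plan is to mimic the continuous energy estimate (\ref{eq:SAVenergy}) at the discrete level, exploiting the fact that the SAV equation (\ref{weakh:q}) was designed precisely to cancel the nonlinear/coupling contributions that appear when one tests the momentum and current equations against $\boldsymbol{u}^{n+1}$ and $\kappa\boldsymbol{J}^{n+1}$. Concretely, I will combine three inner products and the scalar multiplication:
\begin{itemize}
\item take the $\boldsymbol{L}^2$-inner product of (\ref{weakh:u}) with $\boldsymbol{u}^{n+1}$,
\item take the $\boldsymbol{L}^2$-inner product of (\ref{weakh:J}) with $\kappa\boldsymbol{J}^{n+1}$,
\item multiply (\ref{weakh:q}) by $q^{n+1}$,
\end{itemize}
then sum the three identities.

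For the first test, I will use the polarization identity
\[
(\delta_{t}\boldsymbol{u}^{n+1},\boldsymbol{u}^{n+1}) = \tfrac{1}{2}\delta_{t}\|\boldsymbol{u}^{n+1}\|^{2} + \tfrac{\tau}{2}\|\delta_{t}\boldsymbol{u}^{n+1}\|^{2},
\]
integrate the diffusive term by parts to produce $R_{e}^{-1}\|\nabla\boldsymbol{u}^{n+1}\|^{2}$, and drop the pressure term because $\boldsymbol{u}^{n+1}$ is divergence-free by (\ref{weakh:divu}). The surviving "bad" terms will be the two factors multiplied by $q^{n+1}\exp(t^{n+1}/T)$: one convective $(\boldsymbol{u}^{n}\!\cdot\!\nabla\boldsymbol{u}^{n},\boldsymbol{u}^{n+1})$ and one Lorentz $-\kappa(\boldsymbol{J}^{n}\times\boldsymbol{B}^{n+1},\boldsymbol{u}^{n+1})$. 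The second test gives $\kappa\|\boldsymbol{J}^{n+1}\|^{2}$, kills the $\nabla\phi^{n+1}$ contribution using (\ref{weakh:divJ}), and leaves the term $-q^{n+1}\exp(t^{n+1}/T)\kappa(\boldsymbol{u}^{n}\times\boldsymbol{B}^{n+1},\boldsymbol{J}^{n+1})$. Multiplying (\ref{weakh:q}) by $q^{n+1}$ and using the same polarization identity yields
\[
q^{n+1}\delta_{t}q^{n+1} = \tfrac{1}{2}\delta_{t}|q^{n+1}|^{2} + \tfrac{\tau}{2}|\delta_{t}q^{n+1}|^{2},
\]
the dissipative scalar term $\frac{1}{T}|q^{n+1}|^{2}$, together with three coupling contributions that are precisely the negatives of the ones produced by the first two tests.

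The key (and essentially the only nontrivial) observation is that, when the three identities are added, the six coupling/convective contributions cancel pairwise: the convective piece $q^{n+1}\exp(t^{n+1}/T)(\boldsymbol{u}^{n}\!\cdot\!\nabla\boldsymbol{u}^{n},\boldsymbol{u}^{n+1})$ from (\ref{weakh:u}) cancels with the corresponding term in (\ref{weakh:q}), and the two Lorentz-type inner products cancel in the same way. This is the discrete analogue of the "zero-energy-contribution" identities (\ref{eq:zero}), now engineered by the explicit treatment of $\boldsymbol{u}^{n},\boldsymbol{J}^{n}$ inside the nonlinear terms and the matching placement of $\boldsymbol{u}^{n+1},\boldsymbol{J}^{n+1}$ inside the test triplets in (\ref{weakh:q}).

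After cancellation I am left with
\[
\delta_{t}\mathrm{E}_{\text{EL}}^{n+1} + \tfrac{\tau}{2}\|\delta_{t}\boldsymbol{u}^{n+1}\|^{2} + \tfrac{\tau}{2}|\delta_{t}q^{n+1}|^{2} + R_{e}^{-1}\|\nabla\boldsymbol{u}^{n+1}\|^{2} + \kappa\|\boldsymbol{J}^{n+1}\|^{2} + \tfrac{1}{T}|q^{n+1}|^{2} = 0,
\]
and the claimed inequality (\ref{eq:SemiEn}) follows immediately by dropping the two nonnegative numerical-dissipation terms of order $\tau$. I expect no real obstacle: the scheme was tailored so that every nonlinear term admits an exact discrete energy cancellation, and no Gronwall-type bound, no CFL condition on $\tau$, and no assumption on $\boldsymbol{B}^{n+1}$ is needed — which is precisely what "unconditional" means here.
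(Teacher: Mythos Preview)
Your proof is correct and follows essentially the same approach as the paper: test (\ref{weakh:u}) with $\boldsymbol{u}^{n+1}$, (\ref{weakh:J}) with $\kappa\boldsymbol{J}^{n+1}$, multiply (\ref{weakh:q}) by $q^{n+1}$, use the polarization identity (equivalently $2(a-b,a)=|a|^{2}-|b|^{2}+|a-b|^{2}$), observe that the six nonlinear/coupling contributions cancel pairwise by design, and drop the nonnegative numerical-dissipation remainders. There is no gap and no meaningful difference from the paper's argument.
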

\begin{proof}
Taking the inner product of (\ref{weakh:u}) with $\boldsymbol{u}^{n+1}$,
and using the identity $2\left(a-b,a\right)=a^{2}-b^{2}+(a-b)^{2},$it
yields,
\begin{align}
\frac{\left\Vert \boldsymbol{u}^{n+1}\right\Vert ^{2}-\left\Vert \boldsymbol{u}^{n}\right\Vert ^{2}+\left\Vert \boldsymbol{u}^{n+1}-\boldsymbol{u}^{n}\right\Vert ^{2}}{2\tau}+R_{e}^{-1}\left\Vert \nabla\boldsymbol{u}^{n+1}\right\Vert ^{2}\nonumber \\
+q^{n+1}\exp\left(\frac{t^{n+1}}{T}\right)\left(\boldsymbol{u}^{n}\cdot\nabla\boldsymbol{u}^{n},\boldsymbol{u}^{n+1}\right)-q^{n+1}\exp\left(\frac{t^{n+1}}{T}\right)\kappa\left(\boldsymbol{J}^{n}\times\boldsymbol{B}^{n+1},\boldsymbol{u}^{n+1}\right) & =0\label{eq:Semiu}
\end{align}
Taking the inner product of (\ref{weakh:J}) with \textbf{$\kappa\boldsymbol{J}^{n+1}$},
we obtain
\begin{equation}
\kappa\left\Vert \boldsymbol{J}^{n+1}\right\Vert ^{2}-q^{n+1}\exp\left(\frac{t^{n+1}}{T}\right)\left(\boldsymbol{u}^{n}\times\boldsymbol{B}^{n+1},\boldsymbol{J}^{n+1}\right)=0.\label{eq:SemiJen}
\end{equation}
Multiplying (\ref{weakh:q}) by $q^{n+1}$ leads to 
\begin{align}
\frac{\left|q^{n+1}\right|^{2}-\left|q^{n}\right|^{2}+\left|q^{n+1}-q^{n}\right|^{2}}{2\tau}+\frac{1}{T}\left|q^{n+1}\right|^{2}\nonumber \\
-q^{n+1}\exp\left(\frac{t^{n+1}}{T}\right)\left(\left(\boldsymbol{u}^{n}\cdot\nabla\boldsymbol{u}^{n},\boldsymbol{u}^{n+1}\right)-\kappa\left(\boldsymbol{u}^{n}\times\boldsymbol{B}^{n+1},\boldsymbol{J}^{n+1}\right)-\kappa\left(\boldsymbol{J}^{n}\times\boldsymbol{B}^{n+1},\boldsymbol{u}^{n+1}\right)\right) & =0\label{eq:Semiq}
\end{align}
By taking the summations of (\ref{eq:Semiu}), (\ref{eq:SemiJen})
and (\ref{eq:Semiq}), we obtain
\begin{align*}
\frac{\left\Vert \boldsymbol{u}^{n+1}\right\Vert ^{2}-\left\Vert \boldsymbol{u}^{n}\right\Vert ^{2}+\left\Vert \boldsymbol{u}^{n+1}-\boldsymbol{u}^{n}\right\Vert ^{2}}{2\tau}+R_{e}^{-1}\left\Vert \nabla\boldsymbol{u}^{n+1}\right\Vert ^{2}\\
+\kappa\left\Vert \boldsymbol{J}^{n+1}\right\Vert ^{2}+\frac{\left|q^{n+1}\right|^{2}-\left|q^{n}\right|^{2}+\left|q^{n+1}-q^{n}\right|^{2}}{2\tau}+\frac{1}{T}\left|q^{n+1}\right|^{2} & =0
\end{align*}
This yields (\ref{eq:SemiEn}).
\end{proof}
\medskip{}

Considering the well stability proved in the previous theorem, we
further elaborate how to implement the proposed schemes in an efficient
way. While the system of equations in (\ref{eq:SAVEuler}) are coupled
with one another, they can be solved in a decoupled fashion, thanks
to the fact that the auxiliary variable $q(t)$ is a scalar number,
not a field function. We next present such a solution algorithm. Let
\begin{equation}
S^{n+1}=q^{n+1}\exp\left(\frac{t^{n+1}}{T}\right).\label{eq:q}
\end{equation}
We rewrite (\ref{weakh:u})\textendash (\ref{weakh:divJ}) into
\[
\begin{aligned}\delta_{t}\boldsymbol{u}-R_{e}^{-1}\Delta\boldsymbol{u}^{n+1}+\nabla p^{n+1} & =-S^{n+1}\left(\boldsymbol{u}^{n}\cdot\nabla\boldsymbol{u}^{n}-\kappa\boldsymbol{J}^{n}\times\boldsymbol{B}^{n+1}\right),\\
\mathrm{div}\boldsymbol{u}^{n+1} & =0,\\
\boldsymbol{J}^{n+1}+\nabla\phi^{n+1} & =S^{n+1}\left(\boldsymbol{u}^{n}\times\boldsymbol{B}^{n+1}\right),\\
\mathrm{div}\boldsymbol{J}^{n+1} & =0.
\end{aligned}
\]
Barring the unknown scalar number $S^{n+1}$, this is a linear equation
with respect to $\left(\boldsymbol{u}^{n+1},p^{n+1},\boldsymbol{J}^{n+1},\phi^{n+1}\right)$.
Inspired by the work in, we define two field functions $\left(\boldsymbol{u}_{i}^{n+1},p_{i}^{n+1},\boldsymbol{J}_{i}^{n+1},\phi_{i}^{n+1}\right)$,
$i=1,2$, as solutions to the following problems:
\begin{equation}
\begin{cases}
\begin{cases}
\frac{\boldsymbol{u}_{1}^{n+1}-\boldsymbol{u}^{n}}{\tau}-R_{e}^{-1}\Delta\boldsymbol{u}_{1}^{n+1}+\nabla p_{1}^{n+1} & =\boldsymbol{0},\\
\mathrm{div}\boldsymbol{u}_{1}^{n+1} & =0,
\end{cases}\\
\begin{cases}
\frac{\boldsymbol{u}_{2}^{n+1}}{\tau}-R_{e}^{-1}\Delta\boldsymbol{u}_{2}^{n+1}+\nabla p_{2}^{n+1} & =\kappa\boldsymbol{J}^{n}\times\boldsymbol{B}^{n+1}-\boldsymbol{u}^{n}\cdot\nabla\boldsymbol{u}^{n},\\
\mathrm{div}\boldsymbol{u}_{2}^{n+1} & =0,
\end{cases}
\end{cases}\label{eq:weakh11}
\end{equation}
 and 
\begin{equation}
\begin{cases}
\begin{cases}
\boldsymbol{J}_{1}^{n+1}+\nabla\phi_{1}^{n+1} & =\boldsymbol{0},\\
\mathrm{div}\boldsymbol{J}_{1}^{n+1} & =0,
\end{cases}\\
\begin{cases}
\boldsymbol{J}_{2}^{n+1}+\nabla\phi_{2}^{n+1} & =\boldsymbol{u}^{n}\times\boldsymbol{B}^{n+1},\\
\mathrm{div}\boldsymbol{J}_{2}^{n+1} & =0.
\end{cases}
\end{cases}\label{eq:weakh12}
\end{equation}
The two systems in (\ref{eq:weakh11}) and (\ref{eq:weakh12}) are
linear with same constant coefficients. We further derive immediately
from the first two equations in (\ref{eq:weakh12}) that $\boldsymbol{J}_{1}^{n+1}=\boldsymbol{0}$
and $\phi_{1}^{n+1}=0$, thus we need not to solve these two variables
faithfully. 

Then it is straightforward to verify that the solution to (\ref{eq:SAVEuler})
is given by
\begin{equation}
\left(\boldsymbol{u}^{n+1},p^{n+1},\boldsymbol{J}^{n+1},\phi^{n+1}\right)=\left(\boldsymbol{u}_{1}^{n+1},p_{1}^{n+1},\boldsymbol{J}_{1}^{n+1},\phi_{1}^{n+1}\right)+S^{n+1}\left(\boldsymbol{u}_{2}^{n+1},p_{2}^{n+1},\boldsymbol{J}_{2}^{n+1},\phi_{2}^{n+1}\right).\label{eq:soldec}
\end{equation}
where $S^{n+1}$ is to be determined. Inserting equation (\ref{eq:soldec})
into equation (\ref{weakh:q}) leads
\begin{equation}
\left(\frac{T+\tau}{T\tau}-\exp\left(\frac{2t^{n+1}}{T}\right)A_{2}\right)\exp\left(-\frac{t^{n+1}}{T}\right)S^{n+1}=\exp\left(\frac{t^{n+1}}{T}\right)A_{1}+\frac{1}{\tau}q^{n},\label{eq:solq}
\end{equation}
where $A_{i},i=1,2$ is defined by
\[
A_{i}=\left(\boldsymbol{u}^{n}\cdot\nabla\boldsymbol{u}^{n},\boldsymbol{u}_{i}^{n+1}\right)-\kappa\left(\boldsymbol{J}^{n}\times\boldsymbol{B}^{n+1},\boldsymbol{u}_{i}^{n+1}\right)-\kappa\left(\boldsymbol{u}^{n}\times\boldsymbol{B}^{n+1},\boldsymbol{J}_{i}^{n+1}\right).
\]
Thus, we arrive at the final solution algorithm. It involves the following
steps:
\begin{enumerate}
\item Solve equations (\ref{eq:weakh11}) and (\ref{eq:weakh12}) for $\left(\boldsymbol{u}_{i}^{n+1},p_{i}^{n+1}\right)$
and $\left(\boldsymbol{J}_{i}^{n+1},\phi_{i}^{n+1}\right)$, $i=1,2$.
\item Solve equation (\ref{eq:solq}) for $S^{n+1}$.
\item Compute $\left(\boldsymbol{u}^{n+1},p^{n+1},\boldsymbol{J}^{n+1},\phi^{n+1}\right)$
by (\ref{eq:soldec}), compute $q^{n+1}$ by (\ref{eq:solq}) and
(\ref{eq:q}).
\end{enumerate}
In summary, at each time step, we only need to solve two generalized
Stokes equations in (\ref{eq:weakh11}) and one Darcy equations
in (\ref{eq:weakh12}) with constant coefficients plus a linear algebraic
equation (\ref{eq:solq}) at each time step. Hence, the scheme is
very efficient in practical calculations.

\medskip{}
\begin{rem}
\label{rem:bdy}The non-homogeneous boundary conditions $\boldsymbol{u}=\boldsymbol{u}_{b},\,\boldsymbol{J}\cdot\boldsymbol{n}=J_{b}\text{ on }\Gamma$
instead of (\ref{eq:bdy}) can be handled by making several modifications.
We only need to slightly modify the SAV variable $q(t)$ to include
the boundary integration as follows, 
\[
\frac{\mathrm{d}q}{\mathrm{dt}}+\frac{q}{T}-\exp\left(\frac{t}{T}\right)\left(\left(\boldsymbol{u}\cdot\nabla\boldsymbol{u},\boldsymbol{u}\right)-\frac{1}{2}\int_{\Gamma}\left(\boldsymbol{u}_{b}\cdot\boldsymbol{n}\right)\left|\boldsymbol{u}_{b}\right|^{2}ds-\kappa\left(\boldsymbol{u}\times\boldsymbol{B}^{n+1},\boldsymbol{J}\right)-\kappa\left(\boldsymbol{J}\times\boldsymbol{B}^{n+1},\boldsymbol{u}\right)\right)=0.
\]
In the decoupled procedures of (\ref{eq:weakh11})-(\ref{eq:weakh12}),
we need to impose the following boundary conditions on $\Gamma$,
$\boldsymbol{u}_{1}^{n+1}=\boldsymbol{u}_{b},\,\boldsymbol{J}_{1}^{n+1}\cdot\boldsymbol{n}=J_{b}$
and $\boldsymbol{u}_{2}^{n+1}=\boldsymbol{0},\,\boldsymbol{J}_{2}^{n+1}\cdot\boldsymbol{n}=0$.
\medskip{}

Finally, we state that the scheme (\ref{eq:SAVEuler}) is well-defined.
That is to say, it is uniquely solvable at each time step.
\end{rem}
\begin{thm}
\label{thm:uni}The scheme (\ref{eq:SAVEuler}) admits a unique solution
at each time step.
\end{thm}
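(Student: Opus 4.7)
The plan is to leverage the decoupling procedure already laid out in equations (\ref{eq:weakh11})--(\ref{eq:soldec}) preceding the theorem. That reduction is an equivalence: every solution of (\ref{eq:SAVEuler}) can be recovered from solutions of the decoupled auxiliary systems via (\ref{eq:soldec})--(\ref{eq:solq}), and conversely. Hence unique solvability of (\ref{eq:SAVEuler}) reduces to showing unique solvability of (a) the two auxiliary linear systems (\ref{eq:weakh11}) and (\ref{eq:weakh12}), and (b) the scalar algebraic equation (\ref{eq:solq}) for $S^{n+1}$.

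For part (a), I would simply invoke classical saddle-point theory. The two subsystems in (\ref{eq:weakh11}) are generalized Stokes problems with positive reaction coefficient $1/\tau$ and homogeneous Dirichlet velocity; the inf-sup condition for the pair $\boldsymbol{X}\times Y$ yields a unique $(\boldsymbol{u}_i^{n+1},p_i^{n+1})$. The subsystems in (\ref{eq:weakh12}) are standard Darcy-type problems in $\boldsymbol{D}\times S$, again uniquely solvable by the $\boldsymbol{H}(\mathrm{div})$-$L_0^2$ inf-sup condition; moreover $(\boldsymbol{J}_1^{n+1},\phi_1^{n+1})=(\boldsymbol{0},0)$ trivially, so only one such solve is genuinely needed.

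The main obstacle will be part (b): verifying that the coefficient of $S^{n+1}$ in (\ref{eq:solq}), namely $\frac{T+\tau}{T\tau}-\exp(2t^{n+1}/T)\,A_2$, does not vanish. The natural strategy is to test the second subsystem of (\ref{eq:weakh11}) against $\boldsymbol{u}_2^{n+1}$ and the second subsystem of (\ref{eq:weakh12}) against $\kappa\boldsymbol{J}_2^{n+1}$; the divergence-free constraints eliminate the pressure and electric-potential contributions, and combining the two resulting identities gives
\[
A_2 = -\tfrac{1}{\tau}\|\boldsymbol{u}_2^{n+1}\|^2 - R_e^{-1}\|\nabla\boldsymbol{u}_2^{n+1}\|^2 - \kappa\|\boldsymbol{J}_2^{n+1}\|^2 \le 0.
\]
Consequently the coefficient is bounded below by $(T+\tau)/(T\tau)>0$, so $S^{n+1}$ is uniquely determined; then $q^{n+1}=S^{n+1}\exp(-t^{n+1}/T)$ is fixed, and (\ref{eq:soldec}) delivers a unique tuple $(\boldsymbol{u}^{n+1},p^{n+1},\boldsymbol{J}^{n+1},\phi^{n+1})$. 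I do not expect further subtleties: the sign-definiteness of $A_2$ here is essentially the same structural property that drives the unconditional energy estimate of Theorem \ref{thm:SeEnergyLaws}, so reusing it is natural.
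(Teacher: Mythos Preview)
Your proposal is correct and follows essentially the same route as the paper's proof: reduce to the auxiliary Stokes and Darcy saddle-point problems (uniquely solvable by the inf-sup conditions), then test the second subsystems against $\boldsymbol{u}_2^{n+1}$ and $\kappa\boldsymbol{J}_2^{n+1}$ to show $A_2\le 0$, whence the scalar coefficient in (\ref{eq:solq}) is strictly positive. The paper carries out exactly these computations with the same test functions and reaches the same sign conclusion.
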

\begin{proof}
From the above-detailed implementation process, we only need to prove
the existence and uniqueness of the solutions to (\ref{eq:weakh11}),
(\ref{eq:weakh12}) and (\ref{eq:solq}). First of all, it is easy
to see that (\ref{eq:weakh11}) and (\ref{eq:weakh12}) are generalized
Stokes problems and Darcy problems, respectively. From the saddle
point theory, they are uniquely solvable. 

To end the proof, we turn to prove the solvability of (\ref{eq:solq})
by verifying $\frac{T+\tau}{T\tau}-\exp\left(\frac{2t^{n+1}}{T}\right)A_{2}\neq0$.
By taking the $\boldsymbol{L}^{2}$-inner product of the third equation
in (\ref{eq:weakh11}) with $\boldsymbol{u}_{2}^{n+1}$ and using
the last equation in (\ref{eq:weakh11}), we have 
\[
-\left(\boldsymbol{u}^{n}\cdot\nabla\boldsymbol{u}^{n},\boldsymbol{u}_{2}^{n+1}\right)+\kappa\left(\boldsymbol{J}^{n}\times\boldsymbol{B}^{n+1},\boldsymbol{u}_{2}^{n+1}\right)=\frac{\left\Vert \boldsymbol{u}_{2}^{n+1}\right\Vert ^{2}}{\tau}+R_{e}^{-1}\left\Vert \nabla\boldsymbol{u}_{2}^{n+1}\right\Vert ^{2}.
\]
By taking the $\boldsymbol{L}^{2}$-inner product of the third equation
in (\ref{eq:weakh12}) with $\kappa\boldsymbol{J}_{2}^{n+1}$ and
using the last equation in (\ref{eq:weakh12}), we get 
\[
\kappa\left(\boldsymbol{u}^{n}\times\boldsymbol{B}^{n+1},\boldsymbol{J}_{2}^{n+1}\right)=\kappa\left\Vert \boldsymbol{J}_{2}^{n+1}\right\Vert ^{2}.
\]
Combing all the estimates above, we conclude 
\begin{align*}
 \frac{T+\tau}{T\tau}-\exp\left(\frac{2t^{n+1}}{T}\right)A_{2} =\frac{T+\tau}{T\tau}+\exp\left(\frac{2t^{n+1}}{T}\right)\left(\frac{\left\Vert \boldsymbol{u}_{2}^{n+1}\right\Vert ^{2}}{\tau}+R_{e}^{-1}\left\Vert \nabla\boldsymbol{u}_{2}^{n+1}\right\Vert ^{2}+\kappa\left\Vert \boldsymbol{J}_{2}^{n+1}\right\Vert ^{2}\right)>0.
\end{align*}
This yields the existence and uniqueness of $S^{n+1}$. Hence, the
scheme (\ref{eq:SAVEuler}) admits a unique solution. The proof is
finished.
\end{proof}

\subsection{Second-order scheme}

A second-order scheme based on backward differential formula (BDF)
for (\ref{eq:SAVmodel}) is constructed as follows. For all $n\ge1$,
we compute $\left(\boldsymbol{u}^{n+1},p^{n+1},\boldsymbol{J}^{n+1},\phi^{n+1}\right)$
by solving
\begin{subequations}
\begin{align}
\delta_{t}^{2}\boldsymbol{u}^{n+1}-\nu\Delta\boldsymbol{u}^{n+1}+\nabla p^{n+1}+q^{n+1}\exp\left(\frac{t^{n+1}}{T}\right)\left(\hat{\boldsymbol{u}}^{n+1}\cdot\nabla\hat{\boldsymbol{u}}^{n+1}-\kappa\hat{\boldsymbol{J}}^{n+1}\times\boldsymbol{B}^{n+1}\right) & =\boldsymbol{0},\label{weakh:ABu}\\
\mathrm{div}\boldsymbol{u}^{n+1} & =0,\label{weakh:ABdivu}\\
\boldsymbol{J}^{n+1}+\nabla\phi^{n+1}-q^{n+1}\exp\left(\frac{t^{n+1}}{T}\right)\left(\hat{\boldsymbol{u}}^{n+1}\times\boldsymbol{B}^{n+1}\right) & =\boldsymbol{0},\label{weakh:ABJ}\\
\mathrm{div}\boldsymbol{J}^{n+1} & =0,\label{weakh:ABdivJ}\\
\delta_{t}q^{n+1}+\frac{q^{n+1}}{T}\nonumber \\
-\exp\left(\frac{t^{n+1}}{T}\right)\left(\left(\hat{\boldsymbol{u}}^{n+1}\cdot\nabla\hat{\boldsymbol{u}}^{n+1},\boldsymbol{u}^{n+1}\right)-\kappa\left(\hat{\boldsymbol{u}}^{n+1}\times\boldsymbol{B}^{n+1},\boldsymbol{J}^{n+1}\right)-\kappa\left(\hat{\boldsymbol{J}}^{n+1}\times\boldsymbol{B}^{n+1},\boldsymbol{u}^{n+1}\right)\right) & =0.\label{weakh:ABq}
\end{align}
\label{eq:SAVAB}
\end{subequations}
From (\ref{eq:BDF0}), it is easy to see that when $n=0$, $\left(\boldsymbol{u}^{1},p^{1},\boldsymbol{J}^{1},\phi^{1},q^{1}\right)$
is computed by the first-order scheme described in (\ref{eq:SAVEuler}).
Now we derive the energy stability of the above scheme.
\begin{thm}
\label{thm:SeEnergyLawsAB} The scheme (\ref{eq:SAVAB}) is unconditionally
energy stable in the sense that the following energy estimate
\begin{equation}
\delta_{t}\mathrm{E}_{\text{BDF}}^{n+1}\le-R_{e}^{-1}\left\Vert \nabla\boldsymbol{u}^{n+1}\right\Vert ^{2}-\kappa\left\Vert \boldsymbol{J}^{n+1}\right\Vert ^{2}-\frac{1}{T}\left|q^{n+1}\right|^{2}\quad\forall n\ge0,\label{eq:SemiEnAB}
\end{equation}
 holds, where $\mathrm{E}_{\text{BDF}}^{n+1}:=\frac{1}{4}\left(\left\Vert \boldsymbol{u}^{n+1}\right\Vert ^{2}+\left\Vert 2\boldsymbol{u}^{n+1}-\boldsymbol{u}^{n}\right\Vert ^{2}\right)+\frac{1}{4}\left(\left\Vert q^{n+1}\right\Vert ^{2}+\left\Vert 2q^{n+1}-q^{n}\right\Vert ^{2}\right)$.
\end{thm}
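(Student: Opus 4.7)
}
The plan is to mimic the proof of Theorem \ref{thm:SeEnergyLaws} but replace the backward-Euler identity $2\tau(\delta_t v^{n+1},v^{n+1})=\|v^{n+1}\|^2-\|v^n\|^2+\|v^{n+1}-v^n\|^2$ by the G-stability identity for BDF2,
\[
4\tau\bigl(\delta_t^2 v^{n+1},v^{n+1}\bigr)=\bigl(\|v^{n+1}\|^2+\|2v^{n+1}-v^n\|^2\bigr)-\bigl(\|v^n\|^2+\|2v^n-v^{n-1}\|^2\bigr)+\|v^{n+1}-2v^n+v^{n-1}\|^2,
\]
which is precisely tailored to the quadratic quantity appearing in the definition of $\mathrm{E}_{\text{BDF}}^{n+1}$.

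First, I would take the $\boldsymbol{L}^{2}$ inner product of (\ref{weakh:ABu}) with $\boldsymbol{u}^{n+1}$. By (\ref{weakh:ABdivu}) the pressure term vanishes, the diffusion contributes $R_e^{-1}\|\nabla\boldsymbol{u}^{n+1}\|^2$, and the BDF2 identity above produces $\delta_t$ applied to $\tfrac{1}{4}\bigl(\|\boldsymbol{u}^{n+1}\|^2+\|2\boldsymbol{u}^{n+1}-\boldsymbol{u}^n\|^2\bigr)$ together with the nonnegative dissipation $\tfrac{1}{4\tau}\|\boldsymbol{u}^{n+1}-2\boldsymbol{u}^n+\boldsymbol{u}^{n-1}\|^2$. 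The convective and Lorentz terms leave the two expressions $q^{n+1}\exp(t^{n+1}/T)(\hat{\boldsymbol{u}}^{n+1}\cdot\nabla\hat{\boldsymbol{u}}^{n+1},\boldsymbol{u}^{n+1})$ and $-q^{n+1}\exp(t^{n+1}/T)\kappa(\hat{\boldsymbol{J}}^{n+1}\times\boldsymbol{B}^{n+1},\boldsymbol{u}^{n+1})$. Second, I would test (\ref{weakh:ABJ}) with $\kappa\boldsymbol{J}^{n+1}$; (\ref{weakh:ABdivJ}) annihilates the gradient of $\phi^{n+1}$, giving $\kappa\|\boldsymbol{J}^{n+1}\|^2-q^{n+1}\exp(t^{n+1}/T)\kappa(\hat{\boldsymbol{u}}^{n+1}\times\boldsymbol{B}^{n+1},\boldsymbol{J}^{n+1})=0$. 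Third, I would multiply the scalar equation (\ref{weakh:ABq}) by $q^{n+1}$ and apply the same BDF2 identity, producing $\delta_t$ of $\tfrac{1}{4}\bigl(|q^{n+1}|^2+|2q^{n+1}-q^n|^2\bigr)$, the nonnegative dissipation $\tfrac{1}{4\tau}|q^{n+1}-2q^n+q^{n-1}|^2$, the damping $\tfrac{1}{T}|q^{n+1}|^2$, and three coupling expressions.

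The decisive observation, which is the only real bookkeeping step, is that the three coupling expressions produced from (\ref{weakh:ABq}) are the exact negatives of the three expressions produced from testing (\ref{weakh:ABu}) and (\ref{weakh:ABJ}): this holds because the extrapolated states $\hat{\boldsymbol{u}}^{n+1}$, $\hat{\boldsymbol{J}}^{n+1}$ and the common scalar factor $q^{n+1}\exp(t^{n+1}/T)$ appear identically in all three equations. This is the discrete avatar of the ``zero-energy-contribution'' identities (\ref{eq:zero}) and was the motivation for the form of the SAV equation. Adding the three resulting balances and discarding the nonnegative square-difference dissipations on the left-hand side yields (\ref{eq:SemiEnAB}).

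The main obstacle is the discrete cancellation just mentioned; once it is verified, the rest is a routine application of the G-stability identity and the divergence-free conditions. I do not expect any difficulty with the diffusion or damping terms, which remain sign-definite exactly as in the first-order proof.
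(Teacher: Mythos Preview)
Your plan is correct and matches the paper's own proof essentially line for line: test (\ref{weakh:ABu}) with $\boldsymbol{u}^{n+1}$ and (\ref{weakh:ABJ}) with $\kappa\boldsymbol{J}^{n+1}$, multiply (\ref{weakh:ABq}) by $q^{n+1}$, apply the BDF2 G-stability identity to the time-difference terms, observe the exact cancellation of the three coupling expressions, and drop the nonnegative second-difference residuals. The only cosmetic difference is that the paper writes the identity in the form $2(3a-4b+c,a)=|a|^{2}-|b|^{2}+|2a-b|^{2}-|2b-c|^{2}+|a-2b+c|^{2}$, which is your identity after dividing by $4\tau$.
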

\begin{proof}
Taking the inner product of (\ref{weakh:ABu}) with $\boldsymbol{u}^{n+1}$,
and using the identity 
\[
2(3a-4b+c,a)=|a|^{2}-|b|^{2}+|2a-b|^{2}-|2b-c|^{2}+|a-2b+c|^{2},
\]
it yields,
\begin{align}
\frac{1}{4\tau}\left(\left\Vert \boldsymbol{u}^{n+1}\right\Vert ^{2}+\left\Vert 2\boldsymbol{u}^{n+1}-\boldsymbol{u}^{n}\right\Vert ^{2}-\left\Vert \boldsymbol{u}^{n}\right\Vert ^{2}-\left\Vert 2\boldsymbol{u}^{n}-\boldsymbol{u}^{n-1}\right\Vert ^{2}+\left\Vert \boldsymbol{u}^{n+1}-2\boldsymbol{u}^{n}+\boldsymbol{u}^{n-1}\right\Vert ^{2}\right)\nonumber \\
+R_{e}^{-1}\left\Vert \nabla\boldsymbol{u}^{n+1}\right\Vert ^{2}+q^{n+1}\exp\left(\frac{t^{n+1}}{T}\right)\left(\hat{\boldsymbol{u}}^{n+1}\cdot\nabla\hat{\boldsymbol{u}}^{n+1},\boldsymbol{u}^{n+1}\right)-q^{n+1}\exp\left(\frac{t^{n+1}}{T}\right)\kappa\left(\hat{\boldsymbol{J}}^{n+1}\times\boldsymbol{B}^{n+1},\boldsymbol{u}^{nq}\right)=0.\label{eq:SemiuAB}
\end{align}
Taking the inner product of (\ref{weakh:ABJ}) with $\kappa\boldsymbol{J}^{n+1}$,
we have
\begin{equation}
\kappa\left\Vert \boldsymbol{J}^{n+1}\right\Vert ^{2}-q^{n+1}\exp\left(\frac{t^{n+1}}{T}\right)\left(\hat{\boldsymbol{u}}^{n+1}\times\boldsymbol{B}^{n+1},\boldsymbol{J}^{n+1}\right)=0.\label{eq:SemiJAB}
\end{equation}
Multiplying (\ref{weakh:ABq}) by $q^{n+1}$ leads to 
\begin{align}
\frac{1}{4\tau}\left(\left\Vert q^{n+1}\right\Vert ^{2}+\left\Vert 2q^{n+1}-q^{n}\right\Vert ^{2}-\left\Vert q^{n}\right\Vert ^{2}-\left\Vert 2q^{n}-q^{n-1}\right\Vert ^{2}+\left\Vert q^{n+1}-2q^{n}+q^{n-1}\right\Vert ^{2}\right)\nonumber \\
+\frac{1}{T}\left|q^{n+1}\right|^{2}-q^{n+1}\exp\left(\frac{t^{n+1}}{T}\right)\left(\hat{\boldsymbol{u}}^{n+1}\cdot\nabla\hat{\boldsymbol{u}}^{n+1},\boldsymbol{u}^{n+1}\right)\nonumber \\
+q^{n+1}\exp\left(\frac{t^{n+1}}{T}\right)\kappa\left(\hat{\boldsymbol{u}}^{n+1}\times\boldsymbol{B}^{n+1},\boldsymbol{J}^{n+1}\right)+q^{n+1}\exp\left(\frac{t^{n+1}}{T}\right)\kappa\left(\hat{\boldsymbol{J}}^{n+1}\times\boldsymbol{B}^{n+1},\boldsymbol{u}^{n+1}\right)=0.\label{eq:SemiqAB}
\end{align}
By taking the summations of (\ref{eq:SemiuAB}) and (\ref{eq:SemiqAB}),
we obtain
\begin{align*}
\frac{1}{4\tau}\left(\left\Vert \boldsymbol{u}^{n+1}\right\Vert ^{2}+\left\Vert 2\boldsymbol{u}^{n+1}-\boldsymbol{u}^{n}\right\Vert ^{2}-\left\Vert \boldsymbol{u}^{n}\right\Vert ^{2}-\left\Vert 2\boldsymbol{u}^{n}-\boldsymbol{u}^{n-1}\right\Vert ^{2}+\left\Vert \boldsymbol{u}^{n+1}-2\boldsymbol{u}^{n}+\boldsymbol{u}^{n-1}\right\Vert ^{2}\right)\\
+\frac{1}{4\tau}\left(\left\Vert q^{n+1}\right\Vert ^{2}+\left\Vert 2q^{n+1}-q^{n}\right\Vert ^{2}-\left\Vert q^{n}\right\Vert ^{2}-\left\Vert 2q^{n}-q^{n-1}\right\Vert ^{2}+\left\Vert q^{n+1}-2q^{n}+q^{n-1}\right\Vert ^{2}\right)\\
+R_{e}^{-1}\left\Vert \nabla\boldsymbol{u}^{n+1}\right\Vert ^{2}+\kappa\left\Vert \boldsymbol{J}^{n+1}\right\Vert ^{2}+\frac{1}{T}\left|q^{n+1}\right|^{2}=0,
\end{align*}
which completes the proof.
\end{proof}
\smallskip{}

The second-order scheme (\ref{eq:SAVAB}) can be implemented efficiently
in the same way as the first-scheme (\ref{eq:SAVEuler}). For the
convenience of the readers, we present the final solution algorithm.
For $n=0$, we have discussed the implementation in the previous
subsection. For $n\ge1$, it involves the following steps:
\begin{enumerate}
\item Get the solutions $\left(\boldsymbol{u}_{i}^{n+1},p_{i}^{n+1},\boldsymbol{J}_{i}^{n+1},\phi_{i}^{n+1}\right)$,
$i=1,2$.
\begin{enumerate}
\item Get the solutions $\left(\boldsymbol{u}_{i}^{n+1},p_{i}^{n+1}\right)$,
$i=1,2$, by solving 
\begin{equation}
\begin{cases}
\frac{3\boldsymbol{u}_{1}^{n+1}-4\boldsymbol{u}^{n}+\boldsymbol{u}^{n-1}}{2\tau}-R_{e}^{-1}\Delta\boldsymbol{u}_{1}^{n+1}+\nabla p_{1}^{n+1} & =\boldsymbol{0},\\
\mathrm{div}\boldsymbol{u}_{1}^{n+1} & =0,
\end{cases}\label{eq:weakhu11AB}
\end{equation}
and 
\begin{equation}
\begin{cases}
\frac{\boldsymbol{u}_{2}^{n+1}}{\tau}-R_{e}^{-1}\Delta\boldsymbol{u}_{2}^{n+1}+\nabla p_{2}^{n+1} & =\kappa\hat{\boldsymbol{J}}^{n+1}\times\boldsymbol{B}^{n+1}-\hat{\boldsymbol{u}}^{n+1}\cdot\nabla\hat{\boldsymbol{u}}^{n+1},\\
\mathrm{div}\boldsymbol{u}_{2}^{n+1} & =0.
\end{cases}\label{eq:weakhu12AB}
\end{equation}
\item Get the solutions $\left(\boldsymbol{J}_{i}^{n+1},\phi_{i}^{n+1}\right)$,
$i=1,2$, by solving 
\begin{equation}
\begin{cases}
\boldsymbol{J}_{1}^{n+1}+\nabla\phi_{1}^{n+1} & =\boldsymbol{0},\\
\mathrm{div}\boldsymbol{J}_{1}^{n+1} & =0,
\end{cases}\label{eq:weakhJ11AB}
\end{equation}
 and 
\begin{equation}
\begin{cases}
\boldsymbol{J}_{2}^{n+1}+\nabla\phi_{2}^{n+1} & =\hat{\boldsymbol{u}}^{n+1}\times\boldsymbol{B}^{n+1},\\
\mathrm{div}\boldsymbol{J}_{2}^{n+1} & =0.
\end{cases}\label{eq:weakhJ12AB}
\end{equation}
\end{enumerate}
\item Get the solution $S^{n+1}$ by solving 
\begin{equation}
\left(\frac{3T+2\tau}{2\tau T}-\exp\left(\frac{2t^{n+1}}{T}\right)A_{2}\right)\exp\left(-\frac{t^{n+1}}{T}\right)S^{n+1}=\exp\left(\frac{t^{n+1}}{T}\right)A_{1}+\frac{1}{2\tau}\left(4q^{n}-q^{n-1}\right),\label{eq:weakh11q}
\end{equation}
where $A_{i},i=1,2$ is defined by
\[
A_{i}=\left(\hat{\boldsymbol{u}}^{n+1}\cdot\nabla\hat{\boldsymbol{u}}^{n+1},\boldsymbol{u}_{i}^{n+1}\right)-\kappa\left(\hat{\boldsymbol{J}}^{n+1}\times\boldsymbol{B}^{n+1},\boldsymbol{u}_{i}^{n+1}\right)-\kappa\left(\hat{\boldsymbol{u}}^{n+1}\times\boldsymbol{B}^{n+1},\boldsymbol{J}_{i}^{n+1}\right).
\]
\item Compute $\left(\boldsymbol{u}^{n+1},p^{n+1},\boldsymbol{J}^{n+1},\phi^{n+1}\right)$
by 
\[
\left(\boldsymbol{u}^{n+1},p^{n+1},\boldsymbol{J}^{n+1},\phi^{n+1}\right)=\left(\boldsymbol{u}_{1}^{n+1},p_{1}^{n+1},\boldsymbol{J}_{1}^{n+1},\phi_{1}^{n+1}\right)+S^{n+1}\left(\boldsymbol{u}_{2}^{n+1},p_{2}^{n+1},\boldsymbol{J}_{2}^{n+1},\phi_{2}^{n+1}\right),
\]
 and compute $q^{n+1}$ by $q^{n+1}=S^{n+1}\exp\left(-t^{n+1}/T\right)$.
\end{enumerate}
Similar to the implementation of the first-order scheme, we find that the solutions to (\ref{eq:weakhJ11AB}) are $\boldsymbol{J}_{1}^{n+1}=\boldsymbol{0}$
and $\phi_{1}^{n+1}=0$, which means that they do not need to be solved veritably. Therefore, the second-order scheme can be efficiently implemented as the first-order scheme by solving a sequence of linear systems with constant coefficients. 
\medskip{}

By using exactly the same procedure as Theorem \ref{thm:uni} for
the first-order scheme (\ref{eq:SAVEuler}), we can show that the
second-order scheme (\ref{eq:SAVAB}) is uniquely solvable.
\begin{thm}
\label{thm:uniAB}The scheme (\ref{eq:SAVAB}) admits a unique solution
at each time step.
\end{thm}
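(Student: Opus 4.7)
The plan is to transfer the argument of Theorem \ref{thm:uni} essentially verbatim to the BDF2 setting. Since the algorithm has already been reformulated as the three-step decoupled procedure displayed above, it suffices to establish unique solvability of the vector subproblems (\ref{eq:weakhu11AB})--(\ref{eq:weakhJ12AB}) and of the scalar linear equation (\ref{eq:weakh11q}); the reconstruction of $(\boldsymbol{u}^{n+1},p^{n+1},\boldsymbol{J}^{n+1},\phi^{n+1},q^{n+1})$ from (\ref{eq:soldec}) and (\ref{eq:q}) is then purely algebraic, so no further work is needed once those pieces are in hand.

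The four vector subproblems fall into two familiar classes. Equations (\ref{eq:weakhu11AB}) and (\ref{eq:weakhu12AB}) are generalized Stokes problems with a positive reaction coefficient inherited from the BDF2 time derivative, while (\ref{eq:weakhJ11AB}) and (\ref{eq:weakhJ12AB}) are Darcy problems. Unique solvability for each is a standard consequence of the classical saddle point theory, via the usual inf-sup condition on $(\boldsymbol{X},Y)$ for the Stokes pair and on $(\boldsymbol{D},S)$ for the Darcy pair; no new ingredients are required at this stage.

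The only step of substance, and the one I expect to be the main obstacle, is to show that the coefficient $\frac{3T+2\tau}{2\tau T} - \exp(2t^{n+1}/T) A_2$ in (\ref{eq:weakh11q}) is strictly nonzero. Following the first-order proof, I would test the momentum equation in (\ref{eq:weakhu12AB}) against $\boldsymbol{u}_2^{n+1}$ and the current-density equation in (\ref{eq:weakhJ12AB}) against $\kappa\boldsymbol{J}_2^{n+1}$, eliminating the pressure and potential gradients via $\mathrm{div}\,\boldsymbol{u}_2^{n+1}=0$ and $\mathrm{div}\,\boldsymbol{J}_2^{n+1}=0$. Adding the two resulting identities should produce
\begin{equation*}
-A_2 = \tfrac{3}{2\tau}\|\boldsymbol{u}_2^{n+1}\|^2 + R_e^{-1}\|\nabla\boldsymbol{u}_2^{n+1}\|^2 + \kappa\|\boldsymbol{J}_2^{n+1}\|^2 \ge 0,
\end{equation*}
so that $\exp(2t^{n+1}/T)A_2 \le 0$ and the full coefficient is bounded below by $(3T+2\tau)/(2\tau T)>0$. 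Hence $S^{n+1}$ is uniquely determined, and the complete solution tuple is reconstructed uniquely.

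The only subtlety I foresee is bookkeeping: one must verify that the time-derivative prefactor appearing in (\ref{eq:weakhu12AB}) assembles, together with the $1/T$ contribution from (\ref{weakh:ABq}), into exactly the $(3T+2\tau)/(2\tau T)$ displayed on the left of (\ref{eq:weakh11q}). Once this consistency check is made, the proof is a word-for-word transcription of that of Theorem \ref{thm:uni}.
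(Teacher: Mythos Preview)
Your proposal is correct and follows exactly the route the paper takes: the paper's own proof simply states that the argument is similar to Theorem~\ref{thm:uni} and omits the details, and your write-up is precisely that transcription. The only caveat is that, as written in (\ref{eq:weakhu12AB}), the reaction coefficient on $\boldsymbol{u}_2^{n+1}$ is $1/\tau$ rather than $3/(2\tau)$, so your energy identity would read $-A_2=\tfrac{1}{\tau}\|\boldsymbol{u}_2^{n+1}\|^2+R_e^{-1}\|\nabla\boldsymbol{u}_2^{n+1}\|^2+\kappa\|\boldsymbol{J}_2^{n+1}\|^2$; this is the bookkeeping point you already anticipated and does not affect the sign argument.
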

\begin{proof}
Since the proof are similar to the one for Theorem \ref{thm:uni},
we omit the details.
\end{proof}

\section{Error analysis\label{sec:Error}}

In this section, we derive the error estimates for the first-order
scheme. Similar analysis can also be carried out for the second-order
scheme by combing the procedures below but the detail is much more
tedious. We also emphasize that while both schemes can be used in
the three-dimensional case, the error analysis can not be easily extended
to the three-dimension case due to some technical issues. Hence, we
set $d=2$ in this section.

To do this, we denote the error functions as
\[
e_{\boldsymbol{u}}^{n}=\boldsymbol{u}^{n}-\boldsymbol{u}\left(t^{n}\right),\,e_{p}^{n}=p^{n}-p\left(t^{n}\right),\,e_{\boldsymbol{J}}^{n}=\boldsymbol{J}^{n}-\boldsymbol{J}\left(t^{n}\right),\,e_{\phi}^{n}=\phi^{n}-\phi\left(t^{n}\right),\,e_{q}^{n}=q^{n}-q\left(t^{n}\right).
\]
Subtracting (\ref{modeleq:u}) at $t^{n+1}$ from (\ref{eq:SAVEuler}),
we obtain the following error equations,
\begin{align}
\delta_{t}e_{\boldsymbol{u}}^{n+1}-R_{e}^{-1}\Delta e_{\boldsymbol{u}}^{n+1}+\nabla e_{p}^{n+1}-\exp\left(\frac{t^{n+1}}{T}\right)\left(q\left(t^{n+1}\right)\boldsymbol{u}\left(t^{n+1}\right)\cdot\nabla\boldsymbol{u}\left(t^{n+1}\right)-q^{n+1}\boldsymbol{u}^{n}\cdot\nabla\boldsymbol{u}^{n}\right)\nonumber \\
=R_{\boldsymbol{u}}^{n+1}+\kappa\exp\left(\frac{t^{n+1}}{T}\right)\left(q\left(t^{n+1}\right)\boldsymbol{J}^{n}\times\boldsymbol{B}^{n+1}-q^{n+1}\boldsymbol{J}\left(t^{n+1}\right)\times\boldsymbol{B}^{n+1}\right),\label{eq:erroru}\\
\nabla\cdot e_{\boldsymbol{u}}^{n+1}=0,\label{eq:errorp}\\
e_{\boldsymbol{J}}^{n+1}+\nabla e_{\phi}^{n+1}=\exp\left(\frac{t^{n+1}}{T}\right)\left(q^{n+1}\boldsymbol{u}^{n}\times\boldsymbol{B}^{n+1}-q\left(t^{n+1}\right)\boldsymbol{u}\left(t^{n+1}\right)\times\boldsymbol{B}^{n+1}\right),\label{eq:errorJ}\\
\delta_{t}e_{q}^{n+1}+\frac{1}{T}e_{q}^{n+1}-\exp\left(\frac{t^{n+1}}{T}\right)\left(\left(\boldsymbol{u}^{n}\cdot\nabla\boldsymbol{u}^{n},\boldsymbol{u}^{n+1}\right)-\left(\boldsymbol{u}\left(t^{n+1}\right)\cdot\nabla\boldsymbol{u}\left(t^{n+1}\right),\boldsymbol{u}\left(t^{n+1}\right)\right)\right)\nonumber \\
=R_{q}^{n+1}-\kappa\exp\left(\frac{t^{n+1}}{T}\right)\left(\left(\boldsymbol{J}^{n}\times\boldsymbol{B}^{n+1},\boldsymbol{u}^{n+1}\right)-\left(\boldsymbol{J}\left(t^{n+1}\right)\times\boldsymbol{B}^{n+1},\boldsymbol{u}\left(t^{n+1}\right)\right)\right)\nonumber \\
-\kappa\exp\left(\frac{t^{n+1}}{T}\right)\left(\left(\boldsymbol{u}^{n}\times\boldsymbol{B}^{n+1},\boldsymbol{J}^{n+1}\right)-\left(\boldsymbol{u}\left(t^{n+1}\right)\times\boldsymbol{B}^{n+1},\boldsymbol{J}\left(t^{n+1}\right)\right)\right),\label{eq:errorq}
\end{align}
where the truncation errors are defined by 
\[
R_{\boldsymbol{u}}^{n+1}=\frac{1}{\tau}\int_{t^{n}}^{t^{n+1}}\left(t^{n}-t\right)\boldsymbol{u}_{tt}{\rm d}t,\quad R_{q}^{n+1}=\frac{1}{\tau}\int_{t^{n}}^{t^{n+1}}\left(t^{n}-t\right)q_{tt}{\rm d}t.
\]

\begin{lem}[Stability]
 Let $\left(\boldsymbol{u}^{n},p^{n},\boldsymbol{J}^{n},\phi^{n},q^{n}\right),\,n\ge0$
solve (\ref{eq:SAVEuler}). Then it satisfies the following stability
estimate for any $m\ge0$,
\begin{align}
\left\Vert \boldsymbol{u}^{m}\right\Vert ^{2}+\left|q^{m}\right|^{2} & \le k_{1},\label{eq:u_uniform}\\
\tau\sum_{n=0}^{m}\left(R_{e}^{-1}\left\Vert \nabla\boldsymbol{u}^{n}\right\Vert ^{2}+\kappa\left\Vert \boldsymbol{J}^{n}\right\Vert ^{2}+\frac{1}{T}|q^{n}|^{2}\right) & \le k_{2},\label{eq:gradu_uniform}
\end{align}
where the constants $k_{i}$ $(i=1,2)$ are independent of $\tau$. 
\end{lem}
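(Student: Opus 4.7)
The plan is to derive both estimates as a direct consequence of the discrete energy law established in Theorem \ref{thm:SeEnergyLaws}, by a telescoping-sum argument. The hard part, if any, is merely handling the boundary $n=0$ index in the dissipation sum; otherwise the argument is essentially bookkeeping.

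First I would multiply the per-step inequality (\ref{eq:SemiEn}) by $\tau$ to obtain, for every $n\ge 0$,
\[
\mathrm{E}_{\text{EL}}^{n+1}-\mathrm{E}_{\text{EL}}^{n}+\tau R_{e}^{-1}\bigl\|\nabla\boldsymbol{u}^{n+1}\bigr\|^{2}+\tau\kappa\bigl\|\boldsymbol{J}^{n+1}\bigr\|^{2}+\frac{\tau}{T}\bigl|q^{n+1}\bigr|^{2}\le 0,
\]
and then sum this inequality over $n=0,1,\dots,m-1$. The left-hand side telescopes in the energy part, yielding
\[
\mathrm{E}_{\text{EL}}^{m}+\tau\sum_{n=1}^{m}\Bigl(R_{e}^{-1}\bigl\|\nabla\boldsymbol{u}^{n}\bigr\|^{2}+\kappa\bigl\|\boldsymbol{J}^{n}\bigr\|^{2}+\frac{1}{T}\bigl|q^{n}\bigr|^{2}\Bigr)\le\mathrm{E}_{\text{EL}}^{0}=\tfrac{1}{2}\bigl\|\boldsymbol{u}^{0}\bigr\|^{2}+\tfrac{1}{2}.
\]

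The first bound (\ref{eq:u_uniform}) then follows by dropping the nonnegative dissipation sum and recalling the definition of $\mathrm{E}_{\text{EL}}^{m}$, which gives $\|\boldsymbol{u}^{m}\|^{2}+|q^{m}|^{2}\le\|\boldsymbol{u}^{0}\|^{2}+1=:k_{1}$. For the second bound (\ref{eq:gradu_uniform}) I would additionally absorb the $n=0$ contribution: since $\boldsymbol{u}^{0}=\boldsymbol{u}_{0}\in\boldsymbol{H}^{1}(\Omega)$ by the regularity of the initial data, $q^{0}=1$, and $\boldsymbol{J}^{0}$ is determined from Remark \ref{rem:init} as the unique Darcy-type solution of (\ref{eq:Init}) (hence bounded in $\boldsymbol{L}^{2}(\Omega)$ by $\|\boldsymbol{u}^{0}\times\boldsymbol{B}^{0}\|$), the quantity
\[
\tau\Bigl(R_{e}^{-1}\bigl\|\nabla\boldsymbol{u}^{0}\bigr\|^{2}+\kappa\bigl\|\boldsymbol{J}^{0}\bigr\|^{2}+\tfrac{1}{T}|q^{0}|^{2}\Bigr)
\]
is bounded by a constant depending only on the data (and on $T$, through $\tau\le T$). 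Adding this to the telescoped inequality produces the desired constant $k_{2}$ independent of $\tau$.

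Both constants $k_{1},k_{2}$ depend only on $\|\boldsymbol{u}_{0}\|_{1}$, $\|\boldsymbol{B}\|_{L^{\infty}}$, $R_{e}$, $\kappa$, and $T$, but not on the time step $\tau$ or the index $m$, which completes the proposal. No new inequalities beyond Theorem \ref{thm:SeEnergyLaws} and the Darcy estimate for $\boldsymbol{J}^{0}$ are required, so the argument is a short corollary of the unconditional stability already proved.
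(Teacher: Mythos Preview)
Your proposal is correct and follows essentially the same approach as the paper: both arguments multiply the per-step energy inequality from Theorem~\ref{thm:SeEnergyLaws} by $\tau$, telescope over $n=0,\dots,m-1$, and read off the two bounds from the resulting inequality. Your treatment is in fact slightly more careful than the paper's, since you explicitly account for the $n=0$ term in the dissipation sum (\ref{eq:gradu_uniform}) via the data bounds on $\boldsymbol{u}^{0}$, $q^{0}$, and $\boldsymbol{J}^{0}$, whereas the paper simply states that the telescoped inequality ``implies the desired result.''
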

\begin{proof}
By Theorem \ref{thm:SeEnergyLaws}, summing up inequality (\ref{eq:SemiEn})
from $n=0$ to $m-1$, we obtain 
\[
\left\Vert \boldsymbol{u}^{m}\right\Vert ^{2}+\left|q^{m}\right|^{2}+2\tau\sum_{n=0}^{m}\left(R_{e}^{-1}\left\Vert \nabla\boldsymbol{u}^{n+1}\right\Vert ^{2}+\kappa\left\Vert \boldsymbol{J}^{n+1}\right\Vert ^{2}+\frac{1}{T}|q^{n+1}|^{2}\right)\le\left\Vert \boldsymbol{u}^{0}\right\Vert ^{2}+\left\Vert q^{0}\right\Vert ^{2}.
\]
This implies the desired result.
\end{proof}
\medskip{}

Let $P$ be the orthogonal projector in $\boldsymbol{L}^{2}(\Omega)$
onto $\boldsymbol{V}$, we define the Stokes operator $A$ by 
\[
A\boldsymbol{u}=-P\Delta\boldsymbol{u},\quad\forall\boldsymbol{u}\in D(A)=\boldsymbol{H}^{2}(\Omega)\cap\boldsymbol{X}.
\]
The following estimates for the trilinear form $b(\cdot,\cdot,\cdot)$
will be used in our error analysis \citep{Li2020b,Li2021,Temam1995,Layton1998}.
\begin{lem}
The following estimates of the trilinear form holds for $s>1/2$,
\begin{align}
b(\boldsymbol{u},\boldsymbol{v},\boldsymbol{w}) & \le C_{b,0}\left\Vert \nabla\boldsymbol{u}\right\Vert \left\Vert \nabla\boldsymbol{v}\right\Vert \left\Vert \nabla\boldsymbol{w}\right\Vert \quad\forall\boldsymbol{u},\boldsymbol{v},\boldsymbol{w}\in\boldsymbol{X},\label{eq:e_trilinear_form}\\
b(\boldsymbol{u},\boldsymbol{v},\boldsymbol{w}) & \le C_{b,1}\left\Vert \boldsymbol{u}\right\Vert \left\Vert \boldsymbol{v}\right\Vert _{1+s}\left\Vert \nabla\boldsymbol{w}\right\Vert \quad\forall\boldsymbol{u},\boldsymbol{w}\in\boldsymbol{X},\quad\boldsymbol{v}\in\boldsymbol{X}\cap\boldsymbol{H}^{1+s}\left(\Omega\right),\label{eq:e_trilinear_form0}\\
b(\boldsymbol{u},\boldsymbol{v},\boldsymbol{w}) & \le C_{b,2}\left\Vert \boldsymbol{u}\right\Vert _{1+s}\left\Vert \boldsymbol{v}\right\Vert \left\Vert \nabla\boldsymbol{w}\right\Vert \quad\forall\boldsymbol{v},\boldsymbol{w}\in\boldsymbol{X},\quad\boldsymbol{u}\in\boldsymbol{V}\cap\boldsymbol{H}^{1+s}\left(\Omega\right),\label{eq:e_trilinear_form1}\\
b(\boldsymbol{u},\boldsymbol{v},\boldsymbol{w}) & \le C_{b,3}\left\Vert \nabla\boldsymbol{u}\right\Vert \left\Vert \boldsymbol{v}\right\Vert \left\Vert \boldsymbol{w}\right\Vert _{1+s}\quad\forall\boldsymbol{u}\in\boldsymbol{V},\quad\boldsymbol{v}\in\boldsymbol{X},\quad\boldsymbol{w}\in\boldsymbol{X}\cap\boldsymbol{H}^{1+s}\left(\Omega\right),\label{eq:e_trilinear_form2}\\
b(\boldsymbol{u},\boldsymbol{v},\boldsymbol{w}) & \le C_{b,4}\left\Vert \boldsymbol{u}\right\Vert \left\Vert \nabla\boldsymbol{v}\right\Vert \left\Vert \boldsymbol{w}\right\Vert _{1+s}\quad\forall\boldsymbol{u},\boldsymbol{v}\in\boldsymbol{X},\quad\boldsymbol{w}\in\boldsymbol{X}\cap\boldsymbol{H}^{1+s}\left(\Omega\right),\label{eq:e_trilinear_form3}\\
b(\boldsymbol{u},\boldsymbol{v},\boldsymbol{w}) & \le C_{b,5}\left\Vert \nabla\boldsymbol{u}\right\Vert \left\Vert \boldsymbol{v}\right\Vert _{1+s}\left\Vert \boldsymbol{w}\right\Vert \quad\forall\boldsymbol{u},\boldsymbol{w}\in\boldsymbol{X},\quad\boldsymbol{v}\in\boldsymbol{X}\cap\boldsymbol{H}^{1+s}\left(\Omega\right).\label{eq:e_trilinear_form4}
\end{align}
Moreover, for $d=2$, we have
\begin{align}
b(\boldsymbol{u},\boldsymbol{v},\boldsymbol{w}) & \leq C_{b,6}\left\Vert \nabla\boldsymbol{u}\right\Vert ^{1/2}\left\Vert \boldsymbol{u}\right\Vert ^{1/2}\left\Vert \nabla\boldsymbol{v}\right\Vert ^{1/2}\left\Vert \boldsymbol{v}\right\Vert ^{1/2}\left\Vert \nabla\boldsymbol{w}\right\Vert \quad\forall\boldsymbol{u}\in\boldsymbol{V},\quad\boldsymbol{v},\boldsymbol{w}\in\boldsymbol{X},\label{eq:e_trilinear_form2d}\\
b(\boldsymbol{u},\boldsymbol{v},\boldsymbol{w}) & \leq C_{b,7}\left\Vert \nabla\boldsymbol{u}\right\Vert ^{1/2}\left\Vert \boldsymbol{u}\right\Vert ^{1/2}\left\Vert \nabla\boldsymbol{v}\right\Vert ^{1/2}\left\Vert A\boldsymbol{v}\right\Vert ^{1/2}\left\Vert \boldsymbol{w}\right\Vert \quad\forall\boldsymbol{v}\in\boldsymbol{X}\cap\boldsymbol{H}^{2}\left(\Omega\right),\quad\boldsymbol{u},\boldsymbol{w}\in\boldsymbol{X},\label{eq:e_trilinear_form2d1}\\
b(\boldsymbol{u},\boldsymbol{v},\boldsymbol{w}) & \leq C_{b,8}\left\Vert \boldsymbol{u}\right\Vert ^{1/2}\left\Vert A\boldsymbol{u}\right\Vert ^{1/2}\left\Vert \nabla\boldsymbol{v}\right\Vert \left\Vert \boldsymbol{w}\right\Vert \quad\forall\boldsymbol{u}\in\boldsymbol{X}\cap\boldsymbol{H}^{2}\left(\Omega\right),\quad\boldsymbol{v},\boldsymbol{w}\in\boldsymbol{X}.\label{eq:e_trilinear_form2d2}
\end{align}
\end{lem}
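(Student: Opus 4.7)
The plan is to treat all the bounds uniformly by starting from the pointwise identity
\[
b(\boldsymbol{u},\boldsymbol{v},\boldsymbol{w}) = \int_\Omega (\boldsymbol{u}\cdot\nabla\boldsymbol{v})\cdot\boldsymbol{w}\,dx,
\]
applying a three-factor Hölder inequality with carefully chosen exponent triple $(p_1,p_2,p_3)$ satisfying $1/p_1+1/p_2+1/p_3=1$, and then invoking the appropriate Sobolev embedding on each factor. Since these estimates are classical and appear (up to obvious rearrangements) in Temam and Layton, I would also cite \citep{Temam1995,Layton1998} to shorten the write-up.

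For the estimates \eqref{eq:e_trilinear_form}--\eqref{eq:e_trilinear_form4} that are valid in both $d=2$ and $d=3$, the workhorse is the embedding $H^1(\Omega)\hookrightarrow L^p(\Omega)$ (for $p\le 6$ when $d=3$, any $p<\infty$ when $d=2$), together with $H^{1+s}(\Omega)\hookrightarrow L^\infty(\Omega)$ for $s>1/2$ when $d\le 3$. Concretely: for \eqref{eq:e_trilinear_form} choose $(p_1,p_2,p_3)=(4,2,4)$ in $d=2$ or $(3,2,6)$ in $d=3$ and bound every factor by $\|\nabla\cdot\|$ via the $H^1$ embedding; for \eqref{eq:e_trilinear_form0}--\eqref{eq:e_trilinear_form4} place the factor with $H^{1+s}$ regularity in $L^\infty$ (using either $\|\boldsymbol{v}\|_{L^\infty}\lesssim\|\boldsymbol{v}\|_{1+s}$ when $\nabla$ falls off that factor, or $\|\nabla\boldsymbol{v}\|_{L^\infty}\lesssim\|\boldsymbol{v}\|_{1+s}$ whenever an integration by parts permits moving the derivative) and the remaining two factors in $L^2$ or $H^1$ as dictated by the statement. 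Skew-symmetry \eqref{eq:e_skew-symmetric1} lets me shift the derivative between the second and third arguments whenever convenient, which is precisely how \eqref{eq:e_trilinear_form2}--\eqref{eq:e_trilinear_form4} are obtained from \eqref{eq:e_trilinear_form0}--\eqref{eq:e_trilinear_form1}.

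For the two-dimensional sharpenings \eqref{eq:e_trilinear_form2d}--\eqref{eq:e_trilinear_form2d2}, I would use the Ladyzhenskaya inequality $\|f\|_{L^4}\le C\|f\|^{1/2}\|\nabla f\|^{1/2}$ and the Agmon inequality $\|f\|_{L^\infty}\le C\|f\|^{1/2}\|Af\|^{1/2}$, both valid in $d=2$. For \eqref{eq:e_trilinear_form2d}, a Hölder split $(4,2,4)$ followed by Ladyzhenskaya on the two $L^4$ factors yields the claim. For \eqref{eq:e_trilinear_form2d1}, I would first apply skew-symmetry to rewrite $b(\boldsymbol{u},\boldsymbol{v},\boldsymbol{w})=-b(\boldsymbol{u},\boldsymbol{w},\boldsymbol{v})$, then use Hölder $(4,4,2)$, Ladyzhenskaya on $\boldsymbol{u}$, and an interpolation $\|\nabla\boldsymbol{w}\|_{L^4}\le C\|\nabla\boldsymbol{w}\|^{1/2}\|A\boldsymbol{w}\|^{1/2}$ on the gradient factor (so that the $\boldsymbol{v}$-factor ends up in $L^2$); \eqref{eq:e_trilinear_form2d2} is similar but with the $L^\infty$ bound coming from Agmon applied to $\boldsymbol{u}$.

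The calculations themselves are routine; the only real bookkeeping obstacle is to make sure, for each of the nine inequalities, that the derivative can legitimately be placed on the factor that carries the highest regularity (using \eqref{eq:e_skew-symmetric1}--\eqref{eq:e_skew-symmetric2} when needed) so that the Hölder exponents match an admissible embedding. Because each estimate is a standard consequence of Hölder plus one of Sobolev/Ladyzhenskaya/Agmon, I would present the argument for representative cases (say \eqref{eq:e_trilinear_form}, \eqref{eq:e_trilinear_form1}, \eqref{eq:e_trilinear_form2d}, \eqref{eq:e_trilinear_form2d2}) in detail and leave the remaining ones to the reader with a pointer to \citep{Temam1995,Layton1998,Li2020b,Li2021}.
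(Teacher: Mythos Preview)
The paper does not actually prove this lemma: it merely states the estimates and cites \cite{Li2020b,Li2021,Temam1995,Layton1998}. Your sketch---H\"older with a suitable exponent triple, followed by the appropriate Sobolev, Ladyzhenskaya, or Agmon embedding, and using skew-symmetry \eqref{eq:e_skew-symmetric1} to shift the derivative when needed---is exactly the standard argument found in those references, so there is no discrepancy in approach.

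One small correction to your bookkeeping: for \eqref{eq:e_trilinear_form2d} you must first apply skew-symmetry (this is why $\boldsymbol{u}\in\boldsymbol{V}$ is assumed) before the $(4,2,4)$ split, otherwise the Ladyzhenskaya factors land on $\boldsymbol{u}$ and $\boldsymbol{w}$ rather than $\boldsymbol{u}$ and $\boldsymbol{v}$; conversely, for \eqref{eq:e_trilinear_form2d1} no skew-symmetry is needed (indeed $\boldsymbol{u}$ is only assumed in $\boldsymbol{X}$, not $\boldsymbol{V}$)---just apply H\"older $(4,4,2)$ directly to $\int(\boldsymbol{u}\cdot\nabla\boldsymbol{v})\cdot\boldsymbol{w}$ and use Ladyzhenskaya on $\boldsymbol{u}$ together with $\|\nabla\boldsymbol{v}\|_{L^4}\le C\|\nabla\boldsymbol{v}\|^{1/2}\|A\boldsymbol{v}\|^{1/2}$.
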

\medskip{}

We will frequently use the following discrete version of the Gronwall
lemma \citep{Heywood1990}.
\begin{lem}
\label{lem: gronwall2} Let $a_{n},b_{n},c_{n}$, and $d_{n}$ be
four non-negative sequences satisfying 
\[
a_{m}+\tau\sum_{n=1}^{m}b_{n}\leq\tau\sum_{n=0}^{m-1}a_{n}d_{n}+\tau\sum_{n=0}^{m-1}c_{n}+C,\quad m\geq1,
\]
 where $C$ and $\tau$ are two positive constants. Then
\[
a_{m}+\tau\sum_{n=1}^{m}b_{n}\leq\exp\left(\tau\sum_{n=0}^{m-1}d_{n}\right)\left(\tau\sum_{n=0}^{m-1}c_{n}+C\right),\quad m\geq1.
\]
\end{lem}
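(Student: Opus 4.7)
The plan is to prove this discrete Gronwall inequality by induction on $m$, using the basic calculus estimate $1+x\leq e^{x}$ together with a telescoping sum. First I would introduce the abbreviations $\sigma_{m}:=a_{m}+\tau\sum_{n=1}^{m}b_{n}$ for the left-hand side and $X_{m}:=\tau\sum_{n=0}^{m-1}c_{n}+C$ for the forcing, noting that $X_{m}$ is non-decreasing in $m$. Because every $b_{n}$ is non-negative, the inequality $a_{n}\leq\sigma_{n}$ holds pointwise, and the hypothesis rewrites cleanly as
\[
\sigma_{m}\leq\tau\sum_{n=0}^{m-1}\sigma_{n}d_{n}+X_{m},\quad m\geq 1.
\]

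Next I would prove by induction on $m$ the strengthened claim $\sigma_{n}\leq X_{n}\exp\bigl(\tau\sum_{k=0}^{n-1}d_{k}\bigr)$ for every $n\geq 0$. The base case $n=0$ reduces to the implicit initial bound $a_{0}\leq C$, which is the standard tacit requirement in this form of Gronwall and is automatic in the intended error-analysis application (where $a_{0}$ represents an initial discretization error absorbed into $C$). Plugging the inductive bound into the display above and using the monotonicity $X_{n}\leq X_{m}$ isolates the decisive factor:
\[
\sigma_{m}\leq X_{m}\Bigl[1+\tau\sum_{n=0}^{m-1}\exp\Bigl(\tau\sum_{k=0}^{n-1}d_{k}\Bigr)d_{n}\Bigr].
\]

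The closing move converts this bracket into an exponential via a telescoping identity: from $1+\tau d_{n}\leq e^{\tau d_{n}}$ one obtains
\[
\tau d_{n}\exp\Bigl(\tau\sum_{k=0}^{n-1}d_{k}\Bigr)\leq\exp\Bigl(\tau\sum_{k=0}^{n}d_{k}\Bigr)-\exp\Bigl(\tau\sum_{k=0}^{n-1}d_{k}\Bigr),
\]
and summing over $n=0,\ldots,m-1$ telescopes to $\exp\bigl(\tau\sum_{k=0}^{m-1}d_{k}\bigr)-1$. Combined with the leading $+1$ in the bracket, this yields exactly the factor $\exp\bigl(\tau\sum_{n=0}^{m-1}d_{n}\bigr)$ and closes the induction. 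There is no genuine obstacle here beyond the bookkeeping of the $a_{0}$ term; the argument uses no cancellations and no ingredients beyond $1+x\leq e^{x}$, which is why, as the author does, one normally just cites Heywood and Rannacher for the statement rather than repeating the proof.
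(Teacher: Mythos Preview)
Your argument is correct, and you have already anticipated the comparison: the paper does not prove this lemma at all, it simply cites Heywood and Rannacher \cite{Heywood1990} and moves on. Your inductive proof via $1+x\le e^{x}$ and telescoping is the standard one underlying that citation, and you are right to flag the tacit base case $a_{0}\le C$ --- the lemma as literally stated in the paper is false without it (take $a_{0}$ large, $d_{0}>0$, and all other $b_{n},c_{n},d_{n}$ zero), but in every application in Section~\ref{sec:Error} the initial errors vanish, so the omission is harmless there.
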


\subsection{Error estimates for the velocity and current density}

In this subsection, we derive the following error estimates for the
velocity $\boldsymbol{u}$ and current density $\boldsymbol{J}$. 
\begin{thm}
\label{thm:erroruJq} Assuming $\boldsymbol{u}\in H^{2}(0,T;\boldsymbol{H}^{-1}(\Omega))\bigcap H^{1}(0,T;\boldsymbol{H}^{1+s}(\Omega))\bigcap L^{\infty}(0,T;\boldsymbol{H}^{1+s}(\Omega))$,
$\boldsymbol{J}\in L^{\infty}(0,T;\boldsymbol{L}^{2}(\Omega))\cap H^{1}(0,T;\boldsymbol{L}^{2}(\Omega))$ and $\boldsymbol{B}\in L^{\infty}(0,T;\boldsymbol{L}^{3}(\Omega))$,
then for the scheme (\ref{eq:SAVEuler}), the following error estimate
holds for $m\ge0$,
\begin{align}
 & \left\Vert e_{\boldsymbol{u}}^{m+1}\right\Vert ^{2}+|e_{q}^{m+1}|^{2}+\nu\tau\sum\limits _{n=0}^{m}\left\Vert \nabla e_{\boldsymbol{u}}^{n+1}\right\Vert ^{2}+\kappa\tau\sum\limits _{n=0}^{m}\left\Vert \nabla e_{\boldsymbol{J}}^{n+1}\right\Vert ^{2}+\tau\sum\limits _{n=0}^{m}|e_{q}^{n+1}|^{2}\nonumber \\
 & +\sum\limits _{n=0}^{m}\left\Vert e_{\boldsymbol{u}}^{n+1}-e_{\boldsymbol{u}}^{n}\right\Vert ^{2}+\sum\limits _{n=0}^{m}|e_{q}^{n+1}-e_{q}^{n}|^{2}\leq C\tau^{2}.\label{eq:estuJq}
\end{align}
\end{thm}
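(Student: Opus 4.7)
The plan is to mimic the discrete energy argument that yielded Theorem~\ref{thm:SeEnergyLaws}, but now applied to the error equations (\ref{eq:erroru})--(\ref{eq:errorq}). Specifically, I would test (\ref{eq:erroru}) with $2\tau e_{\boldsymbol{u}}^{n+1}$, test (\ref{eq:errorJ}) with $2\tau\kappa e_{\boldsymbol{J}}^{n+1}$, and multiply (\ref{eq:errorq}) by $2\tau e_{q}^{n+1}$, then add the three identities. The identity $2(a-b,a) = |a|^2 - |b|^2 + |a-b|^2$ produces the left-hand side quantities $\|e_{\boldsymbol{u}}^{m+1}\|^2$, $|e_q^{m+1}|^2$, as well as the squared differences, $\nu\|\nabla e_{\boldsymbol{u}}^{n+1}\|^2$ and $\kappa\|e_{\boldsymbol{J}}^{n+1}\|^2$, plus $\frac{1}{T}|e_q^{n+1}|^2$. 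The pressure and potential terms vanish because of (\ref{eq:errorp}) and (\ref{weakh:divJ}) combined with the boundary conditions.

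The key step is to arrange the nonlinear and coupling contributions so that the same cancellations underlying the ``zero-energy-contribution'' structure occur at the error level. Writing $R^{n+1} := q^{n+1}\exp(t^{n+1}/T)$ and decomposing each quantity in (\ref{eq:erroru})--(\ref{eq:errorq}) via
\begin{equation*}
R^{n+1}\boldsymbol{u}^{n}\cdot\nabla\boldsymbol{u}^{n} - R(t^{n+1})\boldsymbol{u}(t^{n+1})\cdot\nabla\boldsymbol{u}(t^{n+1}) = R^{n+1}\bigl(\boldsymbol{u}^n\cdot\nabla\boldsymbol{u}^n - \boldsymbol{u}(t^{n+1})\cdot\nabla\boldsymbol{u}(t^{n+1})\bigr) + e_R^{n+1}\boldsymbol{u}(t^{n+1})\cdot\nabla\boldsymbol{u}(t^{n+1}),
\end{equation*}
and similarly for the Lorentz coupling $R(t)\boldsymbol{J}\times\boldsymbol{B}$ and the Ohm coupling $R(t)\boldsymbol{u}\times\boldsymbol{B}$, I expect the three contributions coming from the $q$-multiplier in (\ref{eq:errorq}) (namely the $b$-term and both $\boldsymbol{B}$-cross terms) to exactly cancel the corresponding terms appearing after testing the $\boldsymbol{u}$- and $\boldsymbol{J}$-equations, modulo remainders that only involve $\boldsymbol{u}(t^{n+1})$, $\boldsymbol{J}(t^{n+1})$ and error quantities with at most one ``new'' factor $e_{\boldsymbol{u}}^{n+1}$, $e_{\boldsymbol{J}}^{n+1}$ or $e_q^{n+1}$.

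Next, I would bound the surviving residual terms. For the convective piece I split $\boldsymbol{u}^n\cdot\nabla\boldsymbol{u}^n - \boldsymbol{u}(t^{n+1})\cdot\nabla\boldsymbol{u}(t^{n+1}) = e_{\boldsymbol{u}}^n\cdot\nabla\boldsymbol{u}^n + \boldsymbol{u}(t^n)\cdot\nabla e_{\boldsymbol{u}}^n + (\boldsymbol{u}(t^n)-\boldsymbol{u}(t^{n+1}))\cdot\nabla\boldsymbol{u}(t^{n+1}) + \boldsymbol{u}(t^{n+1})\cdot\nabla(\boldsymbol{u}(t^n)-\boldsymbol{u}(t^{n+1}))$ and use the two-dimensional trilinear estimates (\ref{eq:e_trilinear_form2d})--(\ref{eq:e_trilinear_form2d2}) together with the regularity assumption $\boldsymbol{u}\in L^\infty(0,T;\boldsymbol{H}^{1+s})$ and the $H^1$-stability bound (\ref{eq:gradu_uniform}). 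The $\boldsymbol{J}\times\boldsymbol{B}$ and $\boldsymbol{u}\times\boldsymbol{B}$ differences are handled by H\"older's inequality with $\boldsymbol{B}\in L^\infty(0,T;\boldsymbol{L}^3)$ and the $L^4$-Sobolev embedding in 2D. The $e_R^{n+1}$ factors are controlled by $|e_q^{n+1}|\le C|e_q^{n+1}|$ together with the $L^\infty$ bounds on $\boldsymbol{u}(t^{n+1})$, $\boldsymbol{J}(t^{n+1})$. Truncation errors $R_{\boldsymbol{u}}^{n+1}$ and $R_q^{n+1}$ are estimated by their integral representations, yielding contributions of order $\tau^2$ thanks to $\boldsymbol{u}_{tt}\in L^2(0,T;\boldsymbol{H}^{-1})$ and $q_{tt}\in L^2(0,T)$.

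After absorbing the ``good'' terms $\nu\tau\|\nabla e_{\boldsymbol{u}}^{n+1}\|^2$, $\kappa\tau\|e_{\boldsymbol{J}}^{n+1}\|^2$ and $\frac{\tau}{T}|e_q^{n+1}|^2$ on the left-hand side via Young's inequality (with small constants on the cross terms), what remains on the right-hand side is a discrete sum of the form $\sum a_n d_n$ with $a_n = \|e_{\boldsymbol{u}}^n\|^2 + |e_q^n|^2$ and $d_n$ a summable quantity (essentially powers of $\|\nabla\boldsymbol{u}\|$ and $\|\boldsymbol{J}\|$ at the exact solution, which are uniformly bounded by regularity), plus an $O(\tau^2)$ contribution from the truncation and time-continuity terms. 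An application of the discrete Gronwall Lemma~\ref{lem: gronwall2} then yields (\ref{eq:estuJq}). The main obstacle will be the bookkeeping of the coupling differences: one must be careful to always place the ``new'' errors $e_{\boldsymbol{u}}^{n+1}, e_{\boldsymbol{J}}^{n+1}, e_q^{n+1}$ in positions where they can be absorbed by the dissipation or Young's inequality, rather than in positions that would force control through $\tau^{-1}$, since the coupling terms are treated explicitly and the SAV variable is non-local across the full system.
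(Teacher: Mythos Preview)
Your overall strategy---test the three error equations with $e_{\boldsymbol u}^{n+1}$, $\kappa e_{\boldsymbol J}^{n+1}$, $e_q^{n+1}$, add, and exploit the zero-energy-contribution cancellations---matches the paper's Lemmas~\ref{lem:erroru}--\ref{lem:errorq}. But the closing step ``apply the discrete Gronwall lemma'' does not go through as you describe, for two reasons the paper has to work around.

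First, the $\boldsymbol J$-error identity forces a term $C\|\boldsymbol B^{n+1}\|_{0,3}^{2}\|\nabla e_{\boldsymbol u}^{n}\|^{2}$ on the right: with only $\boldsymbol B\in L^{3}$ you must put $\boldsymbol u^{n}-\boldsymbol u(t^{n+1})$ in $L^{6}$, hence in $H^{1}$. A matching term $C\|e_{\boldsymbol J}^{n}\|^{2}$ appears from the $\boldsymbol u$-error identity. These are \emph{dissipation-level} errors at the previous step; they are neither of the Gronwall form $d_{n}a_{n}$ with $a_{n}=\|e_{\boldsymbol u}^{n}\|^{2}+|e_q^{n}|^{2}$, nor small enough to be absorbed into $\nu\|\nabla e_{\boldsymbol u}^{n+1}\|^{2}$ or $\kappa\|e_{\boldsymbol J}^{n+1}\|^{2}$ on the left. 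The paper closes this loop by \emph{induction on $m$}: assuming (\ref{eq:estuJq}) for all $k\le m-1$ gives $\tau\sum_{n=0}^{m}\|\nabla e_{\boldsymbol u}^{n}\|^{2}\le C\tau^{2}$ and $\tau\sum_{n=0}^{m}\|e_{\boldsymbol J}^{n}\|^{2}\le C\tau^{2}$, which then feed back as harmless $O(\tau^{2})$ contributions.

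Second, in the $q$-error identity the remainder $e_q^{n+1}\,b\!\left(\boldsymbol u^{n},\boldsymbol u^{n}-\boldsymbol u(t^{n+1}),\boldsymbol u(t^{n+1})\right)$ can only be bounded through $\|\nabla\boldsymbol u^{n}\|$, yielding a term $\tfrac{1}{4k_{2}}\|\nabla\boldsymbol u^{n}\|^{2}\,|e_q^{n+1}|^{2}$. Here $\|\nabla\boldsymbol u^{n}\|^{2}$ is merely summable (by the stability bound (\ref{eq:gradu_uniform})), not uniformly bounded, and the unknown carries the implicit index $n+1$; the Gronwall form of Lemma~\ref{lem: gronwall2} does not apply. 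The paper resolves this with a maximum-index argument: set $m^{*}=\arg\max_{0\le n\le m}|e_q^{n+1}|$, sum first up to $m^{*}$, and use $\tau\sum_{n}\|\nabla\boldsymbol u^{n}\|^{2}\le k_{2}$ to absorb $\tfrac{1}{2}|e_q^{m^{*}+1}|^{2}$ into the left. Only after bounding $|e_q^{m^{*}+1}|$ in this way does the paper return to the sum up to $m$ and apply Gronwall. Your outline does not contain either the induction step or the $m^{*}$ device, and without them the argument stalls.
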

\begin{rem}
In Theorem \ref{thm:erroruJq}, the regularity assumption for the
exact solutions are needed to deliver the error estimates. Compared
with the existing works, we have lowered the regularity index $s=1$
to $s>1/2$, which is a small improvement. The regularity for the
exact solutions in is relative weak and may be a reasonable hypothesis
in such a setting. 
\end{rem}
\medskip{}

The proof of the above theorem will be carried out with a sequence
of lemmas below.
\begin{lem}
\label{lem:erroru}Under the assumptions of Theorem \ref{thm:erroruJq},
the following error estimate holds for $\ 0\leq n\leq N-1$,
\begin{align}
 & \frac{\left\Vert e_{\boldsymbol{u}}^{n+1}\right\Vert ^{2}-\left\Vert e_{\boldsymbol{u}}^{n}\right\Vert ^{2}+\left\Vert e_{\boldsymbol{u}}^{n+1}-e_{\boldsymbol{u}}^{n}\right\Vert ^{2}}{2\tau}+\frac{R_{e}^{-1}}{2}\left\Vert \nabla e_{\boldsymbol{u}}^{n+1}\right\Vert ^{2}\nonumber \\
 & \leq\exp\left(\frac{t^{n+1}}{T}\right)e_{q}^{n+1}\left(\kappa\left(\boldsymbol{J}^{n}\times\boldsymbol{B}^{n+1},e_{\boldsymbol{u}}^{n+1}\right)-\left(\boldsymbol{u}^{n}\cdot\nabla\boldsymbol{u}^{n},e_{\boldsymbol{u}}^{n+1}\right)\right)\nonumber \\
 & \quad+C\left(\left\Vert \boldsymbol{u}(t^{n})\right\Vert _{1+s}^{2}+\left\Vert \boldsymbol{u}(t^{n+1})\right\Vert _{1+s}^{2}+\left\Vert \nabla e_{\boldsymbol{u}}^{n}\right\Vert ^{2}\right)\left\Vert e_{\boldsymbol{u}}^{n}\right\Vert ^{2}+C\left\Vert \boldsymbol{B}^{n+1}\right\Vert _{0,3}^{2}\left\Vert e_{\boldsymbol{J}}^{n}\right\Vert ^{2}\nonumber \\
 & \quad+C\tau\int_{t^{n}}^{t^{n+1}}\left\Vert \boldsymbol{u}_{tt}(t)\right\Vert _{-1}^{2}dt+C\tau\left\Vert \boldsymbol{u}(t^{n+1})\right\Vert _{1+s}^{2}\int_{t^{n}}^{t^{n+1}}\left\Vert \boldsymbol{u}_{t}(t)\right\Vert ^{2}dt\nonumber \\
 & \quad+C\tau\left\Vert \boldsymbol{u}^{n}\right\Vert \int_{t^{n}}^{t^{n+1}}\left\Vert \boldsymbol{u}_{t}(t)\right\Vert _{1+s}^{2}dt+C\tau\left\Vert \boldsymbol{B}^{n+1}\right\Vert _{0,3}^{2}\int_{t^{n}}^{t^{n+1}}\left\Vert \boldsymbol{J}_{t}\right\Vert ^{2}{\rm d}t.\label{eq:error_u}
\end{align}
\end{lem}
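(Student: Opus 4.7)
\medskip

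The plan is to take the $\boldsymbol{L}^{2}$-inner product of the error equation (\ref{eq:erroru}) with $e_{\boldsymbol{u}}^{n+1}$. Because $\nabla\cdot e_{\boldsymbol{u}}^{n+1}=0$ by (\ref{eq:errorp}), the pressure error term drops out. The time-difference term yields, via the identity $2(a-b,a)=|a|^{2}-|b|^{2}+|a-b|^{2}$, the first three pieces on the left side of (\ref{eq:error_u}), and the viscous term contributes $R_{e}^{-1}\|\nabla e_{\boldsymbol{u}}^{n+1}\|^{2}$ of which half will be kept and half used to absorb the nonlinearities via Young's inequality. The remaining work is to carefully decompose the convective difference, the coupling difference, and the truncation error term, then bound each using the appropriate trilinear estimates.

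The central algebraic manipulation is to exploit $q(t^{n+1})\exp(t^{n+1}/T)=1$ together with the splittings
\[
q^{n+1}\boldsymbol{u}^{n}\!\cdot\!\nabla\boldsymbol{u}^{n}-q(t^{n+1})\boldsymbol{u}(t^{n+1})\!\cdot\!\nabla\boldsymbol{u}(t^{n+1})=q(t^{n+1})\bigl(\boldsymbol{u}^{n}\!\cdot\!\nabla\boldsymbol{u}^{n}-\boldsymbol{u}(t^{n+1})\!\cdot\!\nabla\boldsymbol{u}(t^{n+1})\bigr)+e_{q}^{n+1}\boldsymbol{u}^{n}\!\cdot\!\nabla\boldsymbol{u}^{n},
\]
and analogously for the $\boldsymbol{J}\times\boldsymbol{B}$ term. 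This isolates the two terms carrying the scalar factor $e_{q}^{n+1}$, which I would leave untouched on the right of (\ref{eq:error_u})---they are precisely the ones that will cancel against contributions from the $\boldsymbol{J}$ and $q$ equations later in the proof of Theorem \ref{thm:erroruJq}. The residual $q(t^{n+1})$-weighted pieces are then further decomposed as $\boldsymbol{u}^{n}-\boldsymbol{u}(t^{n+1})=-e_{\boldsymbol{u}}^{n}+[\boldsymbol{u}(t^{n})-\boldsymbol{u}(t^{n+1})]$ in each slot of $b(\cdot,\cdot,\cdot)$, producing a finite number of trilinear summands involving either a temporal difference (to be controlled by $\tau\int_{t^{n}}^{t^{n+1}}\|\boldsymbol{u}_{t}\|_{\star}^{2}\,dt$) or the error $e_{\boldsymbol{u}}^{n}$.

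For the trilinear summands with temporal differences I would use (\ref{eq:e_trilinear_form0}) and skew-symmetry together with (\ref{eq:e_trilinear_form1}), turning the factor $\|\boldsymbol{u}(t^{n+1})-\boldsymbol{u}(t^{n})\|$ into $\sqrt{\tau}\|\boldsymbol{u}_{t}\|_{L^{2}(t^{n},t^{n+1};L^{2})}$ by Cauchy--Schwarz, and similarly for the $\|\cdot\|_{1+s}$ norm. For the summand $b(\boldsymbol{u}^{n},e_{\boldsymbol{u}}^{n},e_{\boldsymbol{u}}^{n+1})$ I would split $\boldsymbol{u}^{n}=\boldsymbol{u}(t^{n})+e_{\boldsymbol{u}}^{n}$: the $\boldsymbol{u}(t^{n})$ piece is handled by (\ref{eq:e_trilinear_form1}), while the remaining $b(e_{\boldsymbol{u}}^{n},e_{\boldsymbol{u}}^{n},e_{\boldsymbol{u}}^{n+1})$ requires the two-dimensional bound (\ref{eq:e_trilinear_form2d}), giving $C\|\nabla e_{\boldsymbol{u}}^{n}\|\|e_{\boldsymbol{u}}^{n}\|\|\nabla e_{\boldsymbol{u}}^{n+1}\|$; this explains the factor $\|\nabla e_{\boldsymbol{u}}^{n}\|^{2}\|e_{\boldsymbol{u}}^{n}\|^{2}$ in (\ref{eq:error_u}). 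The coupling difference is treated with Hölder plus the Sobolev embedding $\boldsymbol{H}^{1}\hookrightarrow\boldsymbol{L}^{6}$, yielding the factor $\|\boldsymbol{B}^{n+1}\|_{0,3}^{2}\|e_{\boldsymbol{J}}^{n}\|^{2}$ and the temporal-difference term for $\boldsymbol{J}$. Finally, the truncation term is bounded by $\|R_{\boldsymbol{u}}^{n+1}\|_{-1}\|\nabla e_{\boldsymbol{u}}^{n+1}\|$ with $\|R_{\boldsymbol{u}}^{n+1}\|_{-1}^{2}\le C\tau\int_{t^{n}}^{t^{n+1}}\|\boldsymbol{u}_{tt}\|_{-1}^{2}\,dt$.

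The main obstacle is the combination of two difficulties in the single term $b(\boldsymbol{u}^{n},e_{\boldsymbol{u}}^{n},e_{\boldsymbol{u}}^{n+1})$: one cannot simply apply the skew-symmetric form since $e_{\boldsymbol{u}}^{n}$ and $e_{\boldsymbol{u}}^{n+1}$ are distinct, and neither $\boldsymbol{u}^{n}$ nor $e_{\boldsymbol{u}}^{n}$ possesses $H^{1+s}$ regularity. This is where the two-dimensional estimate (\ref{eq:e_trilinear_form2d}) is indispensable and explains why the theorem is restricted to $d=2$. Once all the pieces are bounded with a sufficiently small coupling constant $\varepsilon$ multiplying $\|\nabla e_{\boldsymbol{u}}^{n+1}\|^{2}$, summing all contributions and absorbing $\tfrac{R_{e}^{-1}}{2}\|\nabla e_{\boldsymbol{u}}^{n+1}\|^{2}$ into the left-hand side delivers (\ref{eq:error_u}).
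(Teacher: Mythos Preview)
Your proposal is correct and follows essentially the same route as the paper's proof: take the $\boldsymbol{L}^2$-inner product of (\ref{eq:erroru}) with $e_{\boldsymbol{u}}^{n+1}$, isolate the $e_q^{n+1}$-weighted pieces for later cancellation, decompose the remaining convective and coupling differences via $\boldsymbol{u}^n-\boldsymbol{u}(t^{n+1})=-e_{\boldsymbol{u}}^n+[\boldsymbol{u}(t^n)-\boldsymbol{u}(t^{n+1})]$, and bound the trilinear summands using (\ref{eq:e_trilinear_form0})--(\ref{eq:e_trilinear_form1}) for the temporal-difference and mixed pieces and the two-dimensional estimate (\ref{eq:e_trilinear_form2d}) for the purely error-driven term $b(e_{\boldsymbol{u}}^n,e_{\boldsymbol{u}}^n,e_{\boldsymbol{u}}^{n+1})$. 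The paper carries out exactly this decomposition (its $\mathrm{I}_{2,1}$, $\mathrm{I}_{2,2}$, $\mathrm{I}_3$), applying skew-symmetry before invoking (\ref{eq:e_trilinear_form2d}) on the critical term, which is equivalent to what you describe.
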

\begin{proof}
\noindent Taking the $\boldsymbol{L}^{2}$-inner product of (\ref{eq:erroru})
with $e_{\boldsymbol{u}}^{n+1}$ and using (\ref{eq:errorp}), we
get
\begin{align}
 & \frac{\left\Vert e_{\boldsymbol{u}}^{n+1}\right\Vert ^{2}-\left\Vert e_{\boldsymbol{u}}^{n}\right\Vert ^{2}+\left\Vert e_{\boldsymbol{u}}^{n+1}-e_{\boldsymbol{u}}^{n}\right\Vert ^{2}}{2\tau}+R_{e}^{-1}\left\Vert \nabla e_{\boldsymbol{u}}^{n+1}\right\Vert ^{2}\nonumber \\
 & =\left(R_{\boldsymbol{u}}^{n+1},e_{\boldsymbol{u}}^{n+1}\right)+\exp\left(\frac{t^{n+1}}{T}\right)\left(q\left(t^{n+1}\right)\boldsymbol{u}\left(t^{n+1}\right)\cdot\nabla\boldsymbol{u}\left(t^{n+1}\right)-q^{n+1}\boldsymbol{u}^{n}\cdot\nabla\boldsymbol{u}^{n},e_{\boldsymbol{u}}^{n+1}\right)\nonumber \\
 & \quad+\kappa\exp\left(\frac{t^{n+1}}{T}\right)\left(q^{n+1}\boldsymbol{J}^{n}\times\boldsymbol{B}^{n+1}-q\left(t^{n+1}\right)\boldsymbol{J}\left(t^{n+1}\right)\times\boldsymbol{B}^{n+1},e_{\boldsymbol{u}}^{n+1}\right)\nonumber \\
 & \coloneqq\sum_{i=1}^{3}{\rm I}_{i}\label{eq:erroruI}
\end{align}
For term ${\rm I}_{1}$, we use the Young inequality to have 
\begin{equation}
{\rm I}_{1}\le\frac{R_{e}^{-1}}{8}\left\Vert \nabla e_{\boldsymbol{u}}^{n+1}\right\Vert ^{2}+C\tau\int_{t^{n}}^{t^{n+1}}\left\Vert \boldsymbol{u}_{tt}\right\Vert _{-1}^{2}{\rm d}t.\label{eq:truncu}
\end{equation}
For term ${\rm I}_{2}$, we rearrange it as follows,
\begin{align}
{\rm I}_{2} & =\left(\left(\boldsymbol{u}\left(t^{n+1}\right)-\boldsymbol{u}^{n}\right)\cdot\nabla\boldsymbol{u}\left(t^{n+1}\right),e_{\boldsymbol{u}}^{n+1}\right)+\left(\boldsymbol{u}^{n}\cdot\nabla\left(\boldsymbol{u}\left(t^{n+1}\right)-\boldsymbol{u}^{n}\right),e_{\boldsymbol{u}}^{n+1}\right)\nonumber \\
 & \quad-\exp\left(\frac{t^{n+1}}{T}\right)e_{q}^{n+1}\left(\boldsymbol{u}^{n}\cdot\nabla\boldsymbol{u}^{n},e_{\boldsymbol{u}}^{n+1}\right)\nonumber \\
 & ={\rm I_{2,1}}+{\rm I}_{2,2}-\exp\left(\frac{t^{n+1}}{T}\right)e_{q}^{n+1}\left(\boldsymbol{u}^{n}\cdot\nabla\boldsymbol{u}^{n},e_{\boldsymbol{u}}^{n+1}\right).\label{eq:erroruII}
\end{align}
For term ${\rm I}_{2,1}$, it can be bounded by using (\ref{eq:e_trilinear_form0})
and Young inequality,
\begin{align}
{\rm I_{2,1}} & \le C_{b,1}\left\Vert \boldsymbol{u}\left(t^{n+1}\right)-\boldsymbol{u}^{n}\right\Vert \left\Vert \boldsymbol{u}\left(t^{n+1}\right)\right\Vert _{1+s}\left\Vert \nabla e_{\boldsymbol{u}}^{n+1}\right\Vert \nonumber \\
 & \le C_{b,1}\left\Vert e_{\boldsymbol{u}}^{n}\right\Vert \left\Vert \boldsymbol{u}\left(t^{n+1}\right)\right\Vert _{1+s}\left\Vert \nabla e_{\boldsymbol{u}}^{n+1}\right\Vert +C_{b,1}\left\Vert \int_{t^{n}}^{t^{n+1}}\boldsymbol{u}_{t}{\rm d}t\right\Vert \left\Vert \boldsymbol{u}\left(t^{n+1}\right)\right\Vert _{1+s}\left\Vert \nabla e_{\boldsymbol{u}}^{n+1}\right\Vert \nonumber \\
 & \le\frac{R_{e}^{-1}}{8}\left\Vert \nabla e_{\boldsymbol{u}}^{n+1}\right\Vert ^{2}+C\left\Vert \boldsymbol{u}\left(t^{n+1}\right)\right\Vert _{1+s}^{2}\left\Vert e_{\boldsymbol{u}}^{n}\right\Vert ^{2}+C\tau\left\Vert \boldsymbol{u}(t^{n+1})\right\Vert _{1+s}^{2}\int_{t^{n}}^{t^{n+1}}\left\Vert \boldsymbol{u}_{t}(s)\right\Vert ^{2}ds.\label{eq:erroruJ1}
\end{align}
Similarly, term ${\rm I_{2,2}}$ can be estimated by using (\ref{eq:e_trilinear_form0})-(\ref{eq:e_trilinear_form1})
and Young inequality,
\begin{align}
{\rm {\rm I_{2,2}}} & =\left(\boldsymbol{u}^{n}\cdot\nabla\left(\boldsymbol{u}\left(t^{n+1}\right)-\boldsymbol{u}\left(t^{n}\right)\right),e_{\boldsymbol{u}}^{n+1}\right)-\left(e_{\boldsymbol{u}}^{n}\cdot\nabla e_{\boldsymbol{u}}^{n},e_{\boldsymbol{u}}^{n+1}\right)-\left(\boldsymbol{u}\left(t^{n}\right)\cdot\nabla e_{\boldsymbol{u}}^{n},e_{\boldsymbol{u}}^{n+1}\right)\nonumber \\
 & =\left(\boldsymbol{u}^{n}\cdot\nabla\left(\boldsymbol{u}\left(t^{n+1}\right)-\boldsymbol{u}\left(t^{n}\right)\right),e_{\boldsymbol{u}}^{n+1}\right)+\left(e_{\boldsymbol{u}}^{n}\cdot\nabla e_{\boldsymbol{u}}^{n+1},e_{\boldsymbol{u}}^{n}\right)+\left(\boldsymbol{u}\left(t^{n}\right)\cdot\nabla e_{\boldsymbol{u}}^{n+1},e_{\boldsymbol{u}}^{n}\right)\nonumber \\
 & \le C_{b,1}\left\Vert \boldsymbol{u}^{n}\right\Vert \left\Vert \boldsymbol{u}\left(t^{n+1}\right)-\boldsymbol{u}\left(t^{n}\right)\right\Vert _{1+s}\left\Vert \nabla e_{\boldsymbol{u}}^{n+1}\right\Vert +C_{b,6}\left\Vert e_{\boldsymbol{u}}^{n}\right\Vert \left\Vert \nabla e_{\boldsymbol{u}}^{n}\right\Vert \left\Vert \nabla e_{\boldsymbol{u}}^{n+1}\right\Vert +C_{b,2}\left\Vert \boldsymbol{u}\left(t^{n}\right)\right\Vert _{1+s}\left\Vert \nabla e_{\boldsymbol{u}}^{n+1}\right\Vert \left\Vert e_{\boldsymbol{u}}^{n}\right\Vert \nonumber \\
 & \le C_{b,1}\left\Vert \boldsymbol{u}^{n}\right\Vert \left\Vert \int_{t^{n}}^{t^{n+1}}\boldsymbol{u}_{t}{\rm d}t\right\Vert _{1+s}\left\Vert \nabla e_{\boldsymbol{u}}^{n+1}\right\Vert +C_{b,6}\left\Vert e_{\boldsymbol{u}}^{n}\right\Vert \left\Vert \nabla e_{\boldsymbol{u}}^{n}\right\Vert \left\Vert \nabla e_{\boldsymbol{u}}^{n+1}\right\Vert +C_{b,2}\left\Vert \boldsymbol{u}\left(t^{n}\right)\right\Vert _{1+s}\left\Vert \nabla e_{\boldsymbol{u}}^{n+1}\right\Vert \left\Vert e_{\boldsymbol{u}}^{n}\right\Vert \nonumber \\
 & \leq\frac{R_{e}^{-1}}{8}\left\Vert \nabla e_{\boldsymbol{u}}^{n+1}\right\Vert ^{2}+C\left(\left\Vert \boldsymbol{u}\left(t^{n}\right)\right\Vert _{1+s}^{2}+\left\Vert \nabla e_{\boldsymbol{u}}^{n}\right\Vert ^{2}\right)\left\Vert e_{\boldsymbol{u}}^{n}\right\Vert ^{2}+C\tau\left\Vert \boldsymbol{u}^{n}\right\Vert \int_{t^{n}}^{t^{n+1}}\left\Vert \boldsymbol{u}_{t}(s)\right\Vert _{1+s}^{2}ds.\label{eq:erroruJ2}
\end{align}
For term ${\rm I}_{3}$, we invoke with Hölder inequality and Young
inequality to deduce that 
\begin{align}
{\rm I}_{3} & =\kappa\exp\left(\frac{t^{n+1}}{T}\right)e_{q}^{n+1}\left(\boldsymbol{J}^{n}\times\boldsymbol{B}^{n+1},e_{\boldsymbol{u}}^{n+1}\right)+\left(\left(\boldsymbol{J}^{n}-\boldsymbol{J}\left(t^{n+1}\right)\right)\times\boldsymbol{B}^{n+1},e_{\boldsymbol{u}}^{n+1}\right)\nonumber \\
 & =\kappa\exp\left(\frac{t^{n+1}}{T}\right)e_{q}^{n+1}\left(\boldsymbol{J}^{n}\times\boldsymbol{B}^{n+1},e_{\boldsymbol{u}}^{n+1}\right)+\kappa\left\Vert \boldsymbol{J}^{n}-\boldsymbol{J}\left(t^{n+1}\right)\right\Vert \left\Vert \boldsymbol{B}^{n+1}\right\Vert _{0,3}\left\Vert e_{\boldsymbol{u}}^{n+1}\right\Vert _{0,6}\nonumber \\
 & \le\kappa\exp\left(\frac{t^{n+1}}{T}\right)e_{q}^{n+1}\left(\boldsymbol{J}^{n}\times\boldsymbol{B}^{n+1},e_{\boldsymbol{u}}^{n+1}\right)+C_{p}\kappa\left\Vert e_{\boldsymbol{J}}^{n}+\boldsymbol{J}\left(t^{n}\right)-\boldsymbol{J}\left(t^{n+1}\right)\right\Vert \left\Vert \boldsymbol{B}^{n+1}\right\Vert _{0,3}\left\Vert \nabla e_{\boldsymbol{u}}^{n+1}\right\Vert \nonumber \\
 & \le\kappa\exp\left(\frac{t^{n+1}}{T}\right)e_{q}^{n+1}\left(\boldsymbol{J}^{n}\times\boldsymbol{B}^{n+1},e_{\boldsymbol{u}}^{n+1}\right)+\frac{R_{e}^{-1}}{8}\left\Vert \nabla e_{\boldsymbol{u}}^{n+1}\right\Vert ^{2}\nonumber \\
 & \quad+C\left\Vert \boldsymbol{B}^{n+1}\right\Vert _{0,3}^{2}\left\Vert e_{\boldsymbol{J}}^{n}\right\Vert ^{2}+C\tau\left\Vert \boldsymbol{B}^{n+1}\right\Vert _{0,3}^{2}\int_{t^{n}}^{t^{n+1}}\left\Vert \boldsymbol{J}_{t}\right\Vert ^{2}{\rm d}t.\label{eq:erroruIII}
\end{align}
Combining (\ref{eq:erroruI}) with (\ref{eq:truncu})\textendash (\ref{eq:erroruIII})
leads to the desired result.
\end{proof}
Next, we derive a bound for the errors of the current density.
\begin{lem}
\label{lem:errorJ}Under the assumptions of Theorem \ref{thm:erroruJq},
the following error estimate holds for $\ 0\leq n\leq N-1$,
\begin{equation}
\frac{\kappa}{2}\left\Vert e_{\boldsymbol{J}}^{n+1}\right\Vert ^{2}\le\kappa\exp\left(\frac{t^{n+1}}{T}\right)e_{q}^{n+1}\left(\boldsymbol{u}^{n}\times\boldsymbol{B}^{n+1},e_{\boldsymbol{J}}^{n+1}\right)+C\left\Vert \boldsymbol{B}^{n+1}\right\Vert _{0,3}^{2}\left\Vert \nabla e_{\boldsymbol{u}}^{n}\right\Vert ^{2}+C\tau\left\Vert \boldsymbol{B}^{n+1}\right\Vert _{0,3}^{2}\int_{t^{n}}^{t^{n+1}}\left\Vert \nabla\boldsymbol{u}_{t}\right\Vert ^{2}{\rm d}t.\label{eq:error_J}
\end{equation}
\end{lem}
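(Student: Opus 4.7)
The plan is to mimic the structure of the proof of Lemma 1 but applied to the current-density error equation \eqref{eq:errorJ}. Since the equation for $\boldsymbol{J}$ is steady, there is no time-derivative term to absorb into a discrete energy, and the proof will consist essentially of one inner product followed by a single Hölder/Young split.

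First I would take the $\boldsymbol{L}^{2}$-inner product of \eqref{eq:errorJ} with $\kappa e_{\boldsymbol{J}}^{n+1}$. Since $\nabla\cdot\boldsymbol{J}^{n+1}=0=\nabla\cdot\boldsymbol{J}(t^{n+1})$ and both $\boldsymbol{J}^{n+1}$ and $\boldsymbol{J}(t^{n+1})$ have vanishing normal trace on $\Gamma$, integration by parts gives $(\nabla e_{\phi}^{n+1},e_{\boldsymbol{J}}^{n+1})=0$. This leaves
\[
\kappa\left\Vert e_{\boldsymbol{J}}^{n+1}\right\Vert ^{2}=\kappa\exp\!\left(\tfrac{t^{n+1}}{T}\right)\!\left(q^{n+1}\boldsymbol{u}^{n}\times\boldsymbol{B}^{n+1}-q(t^{n+1})\boldsymbol{u}(t^{n+1})\times\boldsymbol{B}^{n+1},\,e_{\boldsymbol{J}}^{n+1}\right).
\]

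Next I would decompose the bracket on the right in the standard \emph{add-and-subtract} fashion used for SAV error analysis:
\[
q^{n+1}\boldsymbol{u}^{n}-q(t^{n+1})\boldsymbol{u}(t^{n+1})=e_{q}^{n+1}\boldsymbol{u}^{n}+q(t^{n+1})\bigl(\boldsymbol{u}^{n}-\boldsymbol{u}(t^{n+1})\bigr).
\]
Since $q(t)=\exp(-t/T)$ exactly, $q(t^{n+1})\exp(t^{n+1}/T)=1$, so the first term produces exactly the $e_{q}^{n+1}$-factor appearing in \eqref{eq:error_J} and the second term becomes $\kappa((\boldsymbol{u}^{n}-\boldsymbol{u}(t^{n+1}))\times\boldsymbol{B}^{n+1},e_{\boldsymbol{J}}^{n+1})$. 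Splitting $\boldsymbol{u}^{n}-\boldsymbol{u}(t^{n+1})=e_{\boldsymbol{u}}^{n}-\int_{t^{n}}^{t^{n+1}}\boldsymbol{u}_{t}\,dt$ isolates the spatial error and the consistency error.

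For the resulting inner product I would apply Hölder with the triple $(L^{6},L^{3},L^{2})$,
\[
\bigl|((\boldsymbol{u}^{n}-\boldsymbol{u}(t^{n+1}))\times\boldsymbol{B}^{n+1},e_{\boldsymbol{J}}^{n+1})\bigr|\le\left\Vert \boldsymbol{u}^{n}-\boldsymbol{u}(t^{n+1})\right\Vert _{0,6}\left\Vert \boldsymbol{B}^{n+1}\right\Vert _{0,3}\left\Vert e_{\boldsymbol{J}}^{n+1}\right\Vert ,
\]
then use the Sobolev embedding $H_{0}^{1}(\Omega)\hookrightarrow L^{6}(\Omega)$ to control $\left\Vert e_{\boldsymbol{u}}^{n}\right\Vert _{0,6}\lesssim\left\Vert \nabla e_{\boldsymbol{u}}^{n}\right\Vert $ and $\left\Vert \int_{t^{n}}^{t^{n+1}}\boldsymbol{u}_{t}\,dt\right\Vert _{0,6}\lesssim\sqrt{\tau}\bigl(\int_{t^{n}}^{t^{n+1}}\left\Vert \nabla\boldsymbol{u}_{t}\right\Vert ^{2}dt\bigr)^{1/2}$ via Cauchy--Schwarz in time. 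A single application of Young's inequality with parameter tuned to absorb $\tfrac{\kappa}{2}\left\Vert e_{\boldsymbol{J}}^{n+1}\right\Vert ^{2}$ into the left-hand side then produces exactly \eqref{eq:error_J}.

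There is no serious obstacle here: the proof is one short estimate, and everything is linear in the unknowns on the right. The only mildly delicate point is that the $\boldsymbol{L}^{3}$ norm of $\boldsymbol{B}$ (rather than $L^{\infty}$) is precisely what one gets from balancing the exponents $\tfrac{1}{6}+\tfrac{1}{3}+\tfrac{1}{2}=1$, so the regularity hypothesis $\boldsymbol{B}\in L^{\infty}(0,T;\boldsymbol{L}^{3}(\Omega))$ assumed in Theorem \ref{thm:erroruJq} is used exactly once, here.
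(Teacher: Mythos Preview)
Your proposal is correct and follows essentially the same route as the paper: test \eqref{eq:errorJ} with $\kappa e_{\boldsymbol{J}}^{n+1}$, use the add--and--subtract decomposition $q^{n+1}\boldsymbol{u}^{n}-q(t^{n+1})\boldsymbol{u}(t^{n+1})=e_{q}^{n+1}\boldsymbol{u}^{n}+q(t^{n+1})(\boldsymbol{u}^{n}-\boldsymbol{u}(t^{n+1}))$, apply the $(L^{6},L^{3},L^{2})$ H\"older split with the embedding $H_{0}^{1}\hookrightarrow L^{6}$, and absorb $\tfrac{\kappa}{2}\|e_{\boldsymbol{J}}^{n+1}\|^{2}$ by Young's inequality. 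One small side remark: the $\boldsymbol{L}^{3}$ bound on $\boldsymbol{B}$ is not used ``exactly once, here'' --- the same H\"older pairing appears in Lemmas~\ref{lem:erroru} and~\ref{lem:errorq} as well --- but this does not affect the argument.
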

\begin{proof}
Taking the $\boldsymbol{L}^{2}$-inner product of (\ref{eq:errorJ})
with $\kappa e_{\boldsymbol{J}}^{n+1}$ and utilizing Hölder inequality
and Young inequality, we obtain
\begin{align}
\kappa\left\Vert e_{\boldsymbol{J}}^{n+1}\right\Vert ^{2} & =\kappa\exp\left(\frac{t^{n+1}}{T}\right)\left(\left(q^{n+1}\boldsymbol{u}^{n}\times\boldsymbol{B}^{n+1}-q\left(t^{n+1}\right)\boldsymbol{u}\left(t^{n+1}\right)\times\boldsymbol{B}^{n+1}\right),e_{\boldsymbol{J}}^{n+1}\right)\nonumber \\
 & =\kappa\exp\left(\frac{t^{n+1}}{T}\right)e_{q}^{n+1}\left(\boldsymbol{u}^{n}\times\boldsymbol{B}^{n+1},e_{\boldsymbol{J}}^{n+1}\right)-\kappa\left(\left(\boldsymbol{u}\left(t^{n+1}\right)-\boldsymbol{u}^{n}\right)\times\boldsymbol{B}^{n+1},e_{\boldsymbol{J}}^{n+1}\right)\nonumber \\
 & \le\kappa\exp\left(\frac{t^{n+1}}{T}\right)e_{q}^{n+1}\left(\boldsymbol{u}^{n}\times\boldsymbol{B}^{n+1},e_{\boldsymbol{J}}^{n+1}\right)+C_{p}\kappa\left\Vert \nabla\left(\boldsymbol{u}\left(t^{n+1}\right)-\boldsymbol{u}^{n}\right)\right\Vert \left\Vert e_{\boldsymbol{J}}^{n+1}\right\Vert \left\Vert \boldsymbol{B}^{n+1}\right\Vert _{0,3}\nonumber \\
 & \le\kappa\exp\left(\frac{t^{n+1}}{T}\right)e_{q}^{n+1}\left(\boldsymbol{u}^{n}\times\boldsymbol{B}^{n+1},e_{\boldsymbol{J}}^{n+1}\right)+\frac{\kappa}{2}\left\Vert e_{\boldsymbol{J}}^{n+1}\right\Vert ^{2}\nonumber \\
 & \quad+C\left\Vert \boldsymbol{B}^{n+1}\right\Vert _{0,3}^{2}\left\Vert \nabla e_{\boldsymbol{u}}^{n}\right\Vert ^{2}+C\tau\left\Vert \boldsymbol{B}^{n+1}\right\Vert _{0,3}^{2}\int_{t^{n}}^{t^{n+1}}\left\Vert \nabla\boldsymbol{u}_{t}\right\Vert ^{2}{\rm d}t.\label{eq:errorJI}
\end{align}
This leads to the desired result.
\end{proof}
In the next lemma, we derive a bound for the errors with respect to
$q$.
\begin{lem}
\label{lem:errorq}Under the assumptions of Theorem \ref{thm:erroruJq},
the following error estimate holds for $\ 0\leq n\leq N-1$,
\begin{align}
 & \frac{|e_{q}^{n+1}|^{2}-|e_{q}^{n}|^{2}+|e_{q}^{n+1}-e_{q}^{n}|^{2}}{2\tau}+\frac{1}{2T}|e_{q}^{n+1}|^{2}\nonumber \\
 & =\exp\left(\frac{t^{n+1}}{T}\right)e_{q}^{n+1}\left(\boldsymbol{u}^{n}\cdot\nabla\boldsymbol{u}^{n},e_{\boldsymbol{u}}^{n+1}\right)-\kappa\exp\left(\frac{t^{n+1}}{T}\right)e_{q}^{n+1}\left(\boldsymbol{J}^{n}\times\boldsymbol{B}^{n+1},e_{\boldsymbol{u}}^{n+1}\right)\nonumber \\
 & \quad-\kappa\exp\left(\frac{t^{n+1}}{T}\right)e_{q}^{n+1}\left(\boldsymbol{u}^{n}\times\boldsymbol{B}^{n+1},e_{\boldsymbol{J}}^{n+1}\right)+\frac{1}{4k_{2}}\left\Vert \nabla\boldsymbol{u}^{n}\right\Vert ^{2}|e_{q}^{n+1}|^{2}\nonumber \\
 & \quad+C\left(\left\Vert \boldsymbol{u}(t^{n+1})\right\Vert _{1+s}^{2}+\left\Vert \nabla\boldsymbol{u}(t^{n+1})\right\Vert ^{2}\left\Vert \boldsymbol{u}(t^{n+1})\right\Vert _{1+s}^{2}+\left\Vert \boldsymbol{B}^{n+1}\right\Vert _{0,3}^{2}\left\Vert \boldsymbol{J}\left(t^{n+1}\right)\right\Vert ^{2}\right)\left\Vert e_{\boldsymbol{u}}^{n}\right\Vert ^{2}\nonumber \\
 & \quad+C\left\Vert \boldsymbol{B}^{n+1}\right\Vert _{0,3}^{2}\left\Vert \nabla\boldsymbol{u}\left(t^{n+1}\right)\right\Vert ^{2}\left\Vert e_{\boldsymbol{J}}^{n}\right\Vert ^{2}+C\tau\left\Vert \boldsymbol{B}^{n+1}\right\Vert _{0,3}^{2}\left\Vert \nabla\boldsymbol{u}\left(t^{n+1}\right)\right\Vert ^{2}\int_{t^{n}}^{t^{n+1}}\left\Vert \boldsymbol{J}_{t}\right\Vert ^{2}{\rm d}t+C\tau\int_{t^{n}}^{t^{n+1}}\left|q_{tt}(s)\right|^{2}ds\nonumber \\
 & \quad+C\tau\left(\left\Vert \boldsymbol{u}(t^{n+1})\right\Vert _{1+s}^{2}+\left\Vert \nabla\boldsymbol{u}(t^{n+1})\right\Vert ^{2}\left\Vert \boldsymbol{u}\left(t^{n+1}\right)\right\Vert _{1+s}^{2}+\left\Vert \boldsymbol{B}^{n+1}\right\Vert _{0,3}^{2}\left\Vert \boldsymbol{J}\left(t^{n+1}\right)\right\Vert ^{2}\right)\int_{t^{n}}^{t^{n+1}}\left\Vert \nabla\boldsymbol{u}_{t}(s)\right\Vert ^{2}ds.\label{eq:error_q}
\end{align}
\end{lem}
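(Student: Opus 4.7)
The plan is to test the error equation (\ref{eq:errorq}) with $e_q^{n+1}$ and process each resulting term in turn. First I would apply the discrete identity $2(a-b)a = a^2-b^2+(a-b)^2$ to turn $\delta_t e_q^{n+1}\cdot e_q^{n+1}$ into the telescoping quotient on the left of (\ref{eq:error_q}), and split $\frac{1}{T}|e_q^{n+1}|^2$ into two equal halves, reserving one of them to absorb the truncation contribution via $R_q^{n+1}e_q^{n+1}\le \frac{1}{2T}|e_q^{n+1}|^2 + C\tau\int_{t^n}^{t^{n+1}}|q_{tt}|^2\,ds$ by Young's inequality.

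The three nonlinear differences on the right of (\ref{eq:errorq}) are next handled by the characteristic SAV manipulation. Writing $\boldsymbol{u}^{n+1} = e_u^{n+1}+\boldsymbol{u}(t^{n+1})$ and $\boldsymbol{J}^{n+1} = e_{\boldsymbol{J}}^{n+1}+\boldsymbol{J}(t^{n+1})$, and using the skew-symmetry identities $b(\boldsymbol{u}^n,\boldsymbol{u}(t^{n+1}),\boldsymbol{u}(t^{n+1}))=0$ (valid since $\boldsymbol{u}^n$ is weakly divergence-free) together with $(\boldsymbol{J}(t^{n+1})\times \boldsymbol{B}^{n+1},\boldsymbol{u}(t^{n+1})) = (\boldsymbol{u}(t^{n+1})\times\boldsymbol{B}^{n+1},\boldsymbol{J}(t^{n+1}))$, I would rewrite, e.g.,
\[
(\boldsymbol{u}^n\cdot\nabla\boldsymbol{u}^n,\boldsymbol{u}^{n+1}) - (\boldsymbol{u}(t^{n+1})\cdot\nabla\boldsymbol{u}(t^{n+1}),\boldsymbol{u}(t^{n+1})) = (\boldsymbol{u}^n\cdot\nabla\boldsymbol{u}^n,e_u^{n+1}) + b(\boldsymbol{u}^n,\boldsymbol{u}^n-\boldsymbol{u}(t^{n+1}),\boldsymbol{u}(t^{n+1})),
\]
and analogously for the two Lorentz-type couplings. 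The first pieces, once multiplied by $\exp(t^{n+1}/T)e_q^{n+1}$, are exactly the three explicit coupling terms kept on the right of (\ref{eq:error_q}); they are deliberately not estimated here, because they will cancel against their counterparts in Lemmas \ref{lem:erroru} and \ref{lem:errorJ} when the three inequalities are ultimately summed.

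The residual differences such as $b(\boldsymbol{u}^n,\boldsymbol{u}^n-\boldsymbol{u}(t^{n+1}),\boldsymbol{u}(t^{n+1}))$ are then split using $\boldsymbol{u}^n - \boldsymbol{u}(t^{n+1}) = e_u^n - \int_{t^n}^{t^{n+1}}\boldsymbol{u}_t\,dt$ (and similarly for $\boldsymbol{J}$) and bounded by the trilinear estimates from the preceding lemma, chiefly (\ref{eq:e_trilinear_form3}) so that the $\boldsymbol{H}^{1+s}$ regularity sits on the exact velocity $\boldsymbol{u}(t^{n+1})$ while the error $e_u^n$ appears only in $\boldsymbol{L}^2$. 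The coupling pieces are controlled by H\"older's inequality using $\|\boldsymbol{B}^{n+1}\|_{0,3}$ together with Sobolev embedding $\boldsymbol{H}^1\hookrightarrow\boldsymbol{L}^6$. After multiplication by $|e_q^{n+1}|$, each product of the form $|e_q^{n+1}|\cdot\|\nabla\boldsymbol{u}^n\|\cdot X$ is treated by Young's inequality as $|e_q^{n+1}|\cdot\|\nabla\boldsymbol{u}^n\|\cdot X \le \frac{1}{4k_2}\|\nabla\boldsymbol{u}^n\|^2|e_q^{n+1}|^2 + Ck_2 X^2$. The choice of coefficient $\frac{1}{4k_2}$ is dictated by the a priori bound (\ref{eq:gradu_uniform}): it ensures that $\tau\sum_n\frac{1}{4k_2}\|\nabla\boldsymbol{u}^n\|^2$ is uniformly bounded, preparing the discrete Gronwall step that will come after combining the three lemmas.

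The main obstacle is the delicate bookkeeping of the $|e_q^{n+1}|$ contributions: $q$ has no genuine dissipation beyond the soft $\frac{1}{T}|e_q^{n+1}|^2$, so every residual factor of $|e_q^{n+1}|$ must either be absorbed into the reserved $\frac{1}{2T}|e_q^{n+1}|^2$, be paired with a time-integrated truncation quantity $\tau\int|q_{tt}|^2$, or be re-expressed as a Gronwall-compatible $\|\nabla\boldsymbol{u}^n\|^2|e_q^{n+1}|^2$ term --- and in every trilinear estimate the $\boldsymbol{H}^{1+s}$ regularity must ride on the exact solution rather than on an error term. Selecting, among (\ref{eq:e_trilinear_form})--(\ref{eq:e_trilinear_form2d2}), the correct trilinear inequality for each piece so that the coefficients on the right of (\ref{eq:error_q}) come out in exactly the stated form (while avoiding any appearance of $\|\nabla e_{\boldsymbol{u}}^{n+1}\|$ or $\|\nabla e_{\boldsymbol{J}}^{n+1}\|$, which would destroy the later Gronwall argument) is the most delicate part of the computation.
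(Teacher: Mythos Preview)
Your approach is essentially that of the paper: multiply (\ref{eq:errorq}) by $e_q^{n+1}$, peel off from each nonlinear difference the coupling piece $(\cdot,e_{\boldsymbol u}^{n+1})$ or $(\cdot,e_{\boldsymbol J}^{n+1})$ to be cancelled later against Lemmas~\ref{lem:erroru}--\ref{lem:errorJ}, and bound the residuals with the trilinear/H\"older estimates and Young's inequality, the convective residual yielding the critical $\tfrac{1}{4k_2}\|\nabla\boldsymbol u^n\|^2|e_q^{n+1}|^2$ term via (\ref{eq:e_trilinear_form3}). Two small points to fix: the Lorentz residuals $((\boldsymbol J^n-\boldsymbol J(t^{n+1}))\times\boldsymbol B^{n+1},\boldsymbol u(t^{n+1}))$ and $((\boldsymbol u^n-\boldsymbol u(t^{n+1}))\times\boldsymbol B^{n+1},\boldsymbol J(t^{n+1}))$ carry no factor $\|\nabla\boldsymbol u^n\|$, so they too must draw on the reserved $\tfrac{1}{2T}|e_q^{n+1}|^2$ --- the paper takes $\tfrac{1}{8T}$ per term rather than spending the whole half on $R_q^{n+1}e_q^{n+1}$; and the cross-product identity you quote has the wrong sign (one has $(\boldsymbol J\times\boldsymbol B,\boldsymbol u)=-(\boldsymbol u\times\boldsymbol B,\boldsymbol J)$), though your decomposition does not actually rely on it.
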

\begin{proof}
Multiplying both sides of (\ref{eq:errorq}) by $e_{q}^{n+1}$ yields
\begin{align}
 & \frac{\left|e_{q}^{n+1}\right|^{2}-\left|e_{q}^{n}\right|^{2}+\left|e_{q}^{n+1}-e_{q}^{n}\right|^{2}}{2\tau}+\frac{1}{T}\left|e_{q}^{n+1}\right|^{2}\nonumber \\
 & =R_{q}^{n+1}e_{q}^{n+1}+\exp\left(\frac{t^{n+1}}{T}\right)e_{q}^{n+1}\left(\left(\boldsymbol{u}^{n}\cdot\nabla\boldsymbol{u}^{n},\boldsymbol{u}^{n+1}\right)-\left(\boldsymbol{u}\left(t^{n+1}\right)\cdot\nabla\boldsymbol{u}\left(t^{n+1}\right),\boldsymbol{u}\left(t^{n+1}\right)\right)\right)\nonumber \\
 & \quad-\kappa\exp\left(\frac{t^{n+1}}{T}\right)e_{q}^{n+1}\left(\left(\boldsymbol{J}^{n}\times\boldsymbol{B}^{n+1},\boldsymbol{u}^{n+1}\right)-\left(\boldsymbol{J}\left(t^{n+1}\right)\times\boldsymbol{B}^{n+1},\boldsymbol{u}\left(t^{n+1}\right)\right)\right)\nonumber \\
 & \quad-\kappa\exp\left(\frac{t^{n+1}}{T}\right)e_{q}^{n+1}\left(\left(\boldsymbol{u}^{n}\times\boldsymbol{B}^{n+1},\boldsymbol{J}^{n+1}\right)-\left(\boldsymbol{u}\left(t^{n+1}\right)\times\boldsymbol{B}^{n+1},\boldsymbol{J}\left(t^{n+1}\right)\right)\right)\nonumber \\
 & \coloneqq\sum_{i=1}^{4}{\rm I}_{i}\label{eq:errorqI}
\end{align}
We bound term ${\rm I}_{1}$ by using the Young inequality, 
\begin{equation}
{\rm I}_{1}\leq\frac{1}{8T}\left|e_{q}^{n+1}\right|^{2}+C\tau\int_{t^{n}}^{t^{n+1}}\left|q_{tt}(s)\right|^{2}ds.\label{eq:truncq}
\end{equation}

Term ${\rm I}_{2}$ can be recast as 
\begin{align}
{\rm I}_{2}= & \exp\left(\frac{t^{n+1}}{T}\right)e_{q}^{n+1}\left(\boldsymbol{u}^{n}\cdot\nabla\boldsymbol{u}^{n},e_{\boldsymbol{u}}^{n+1}\right)+\exp\left(\frac{t^{n+1}}{T}\right)e_{q}^{n+1}\left(\boldsymbol{u}^{n}\cdot\nabla\left(\boldsymbol{u}^{n}-\boldsymbol{u}\left(t^{n+1}\right)\right),\boldsymbol{u}\left(t^{n+1}\right)\right)\nonumber \\
 & +\exp\left(\frac{t^{n+1}}{T}\right)e_{q}^{n+1}\left(\left(\boldsymbol{u}^{n}-\boldsymbol{u}\left(t^{n+1}\right)\right)\cdot\nabla\boldsymbol{u}\left(t^{n+1}\right),\boldsymbol{u}\left(t^{n+1}\right)\right)\nonumber \\
\coloneqq & \exp\left(\frac{t^{n+1}}{T}\right)e_{q}^{n+1}\left(\boldsymbol{u}^{n}\cdot\nabla\boldsymbol{u}^{n},e_{\boldsymbol{u}}^{n+1}\right)+{\rm I}_{2,1}+{\rm I}_{2,2}.\label{eq:errorqII}
\end{align}
Using (\ref{eq:e_trilinear_form3}) and (\ref{eq:gradu_uniform}),
we bound term ${\rm I}_{2,1}$ by 
\begin{align}
{\rm I}_{2,1} & =\exp\left(\frac{t^{n+1}}{T}\right)e_{q}^{n+1}\left(\boldsymbol{u}^{n}\cdot\nabla\boldsymbol{u}\left(t^{n+1}\right),\boldsymbol{u}^{n}-\boldsymbol{u}\left(t^{n+1}\right)\right)\nonumber \\
 & \le\exp(1)C_{b,3}\left\Vert \nabla\boldsymbol{u}^{n}\right\Vert \left\Vert \boldsymbol{u}\left(t^{n+1}\right)\right\Vert _{1+s,2}\left\Vert \boldsymbol{u}^{n}-\boldsymbol{u}\left(t^{n+1}\right)\right\Vert _{0}\left|e_{q}^{n+1}\right|\nonumber \\
 & \le C\left\Vert \nabla\boldsymbol{u}^{n}\right\Vert \left\Vert \boldsymbol{u}\left(t^{n+1}\right)\right\Vert _{1+s,2}\left\Vert \boldsymbol{u}\left(t^{n+1}\right)-\boldsymbol{u}\left(t^{n}\right)-e_{\boldsymbol{u}}^{n}\right\Vert _{0}\left|e_{q}^{n+1}\right|\nonumber \\
 & \leq\frac{1}{4k_{2}}\left\Vert \nabla\boldsymbol{u}^{n}\right\Vert ^{2}\left|e_{q}^{n+1}\right|^{2}+C\left\Vert e_{\boldsymbol{u}}^{n}\right\Vert ^{2}\left\Vert \boldsymbol{u}\left(t^{n+1}\right)\right\Vert _{1+s,2}^{2}+C\tau\left\Vert \boldsymbol{u}\left(t^{n+1}\right)\right\Vert _{1+s,2}^{2}\int_{t^{n}}^{t^{n+1}}\left\Vert \boldsymbol{u}_{t}\right\Vert ^{2}{\rm d}t,\label{eq:errorqII1}
\end{align}
where $k_{2}$ is given by (\ref{eq:gradu_uniform}). In a same manner,
term ${\rm I}_{2,2}$ can be bounded by using (\ref{eq:e_trilinear_form4})
and Young inequality,
\begin{align}
{\rm I}_{2,2} & =\exp\left(\frac{t^{n+1}}{T}\right)e_{q}^{n+1}\left(\left(\boldsymbol{u}^{n}-\boldsymbol{u}\left(t^{n+1}\right)\right)\cdot\nabla\boldsymbol{u}\left(t^{n+1}\right),\boldsymbol{u}\left(t^{n+1}\right)\right)\nonumber \\
 & \leq\exp(1)C_{b,4}\left|e_{q}^{n+1}\right|\left\Vert \boldsymbol{u}\left(t^{n+1}\right)-\boldsymbol{u}^{n}\right\Vert \left\Vert \nabla\boldsymbol{u}\left(t^{n+1}\right)\right\Vert \left\Vert \boldsymbol{u}\left(t^{n+1}\right)\right\Vert _{1+s,2}\nonumber \\
 & \le C\left|e_{q}^{n+1}\right|\left\Vert \boldsymbol{u}\left(t^{n+1}\right)-\boldsymbol{u}\left(t^{n}\right)-e_{\boldsymbol{u}}^{n}\right\Vert \left\Vert \nabla\boldsymbol{u}\left(t^{n+1}\right)\right\Vert \left\Vert \boldsymbol{u}\left(t^{n+1}\right)\right\Vert _{1+s,2}\nonumber \\
 & \leq\frac{1}{8T}\left|e_{q}^{n+1}\right|^{2}+C\left\Vert \nabla\boldsymbol{u}\left(t^{n+1}\right)\right\Vert ^{2}\left\Vert \boldsymbol{u}\left(t^{n+1}\right)\right\Vert _{2}^{2}\left\Vert e_{\boldsymbol{u}}^{n}\right\Vert ^{2}+C\tau\left\Vert \nabla\boldsymbol{u}\left(t^{n+1}\right)\right\Vert ^{2}\left\Vert \boldsymbol{u}\left(t^{n+1}\right)\right\Vert _{1+s}^{2}\int_{t^{n}}^{t^{n+1}}\left\Vert \boldsymbol{u}_{t}\right\Vert ^{2}{\rm d}t.\label{eq:errorqII2}
\end{align}

Using Hölder inequality and Young inequality, term ${\rm I}_{3}$
can be estimated by 
\begin{align}
{\rm I}_{3} & =\kappa\exp\left(\frac{t^{n+1}}{T}\right)e_{q}^{n+1}\left(\left(\boldsymbol{J}\left(t^{n+1}\right)-\boldsymbol{J}^{n}\right)\times\boldsymbol{B}^{n+1},\boldsymbol{u}\left(t^{n+1}\right)\right)-\kappa\exp\left(\frac{t^{n+1}}{T}\right)e_{q}^{n+1}\left(\boldsymbol{J}^{n}\times\boldsymbol{B}^{n+1},e_{\boldsymbol{u}}^{n+1}\right)\nonumber \\
 & \le\kappa\exp(1)\left|e_{q}^{n+1}\right|\left\Vert \boldsymbol{J}\left(t^{n+1}\right)-\boldsymbol{J}^{n}\right\Vert \left\Vert \boldsymbol{B}^{n+1}\right\Vert _{0,3}\left\Vert \boldsymbol{u}\left(t^{n+1}\right)\right\Vert _{0,6}-\kappa\exp\left(\frac{t^{n+1}}{T}\right)e_{q}^{n+1}\left(\boldsymbol{J}^{n}\times\boldsymbol{B}^{n+1},e_{\boldsymbol{u}}^{n+1}\right)\nonumber \\
 & \le C\left|e_{q}^{n+1}\right|\left\Vert \boldsymbol{J}\left(t^{n+1}\right)-\boldsymbol{J}\left(t^{n}\right)-e_{\boldsymbol{J}}^{n}\right\Vert \left\Vert \boldsymbol{B}^{n+1}\right\Vert _{0,3}\left\Vert \nabla\boldsymbol{u}\left(t^{n+1}\right)\right\Vert -\kappa\exp\left(\frac{t^{n+1}}{T}\right)e_{q}^{n+1}\left(\boldsymbol{J}^{n}\times\boldsymbol{B}^{n+1},e_{\boldsymbol{u}}^{n+1}\right)\nonumber \\
 & \le\frac{1}{8T}\left|e_{q}^{n+1}\right|^{2}+C\left\Vert \boldsymbol{B}^{n+1}\right\Vert _{0,3}^{2}\left\Vert \nabla\boldsymbol{u}\left(t^{n+1}\right)\right\Vert ^{2}\left\Vert e_{\boldsymbol{J}}^{n}\right\Vert ^{2}+C\tau\left\Vert \boldsymbol{B}^{n+1}\right\Vert _{0,3}^{2}\left\Vert \nabla\boldsymbol{u}\left(t^{n+1}\right)\right\Vert ^{2}\int_{t^{n}}^{t^{n+1}}\left\Vert \boldsymbol{J}_{t}\right\Vert ^{2}{\rm d}t\nonumber \\
 & \quad-\kappa\exp\left(\frac{t^{n+1}}{T}\right)e_{q}^{n+1}\left(\boldsymbol{J}^{n}\times\boldsymbol{B}^{n+1},e_{\boldsymbol{u}}^{n+1}\right).\label{eq:errorqIII}
\end{align}

Using the similar procedure, term ${\rm I}_{4}$ can be bounded by
\begin{align}
{\rm I}_{4} & =-\kappa\exp\left(\frac{t^{n+1}}{T}\right)e_{q}^{n+1}\left(\boldsymbol{B}^{n+1}\times\boldsymbol{u}^{n},e_{\boldsymbol{J}}^{n+1}\right)-\kappa\exp\left(\frac{t^{n+1}}{T}\right)e_{q}^{n+1}\left(\boldsymbol{B}^{n+1}\times\left(\boldsymbol{u}^{n}-\boldsymbol{u}\left(t^{n+1}\right)\right),\boldsymbol{J}\left(t^{n+1}\right)\right)\nonumber \\
 & \le-\kappa\exp\left(\frac{t^{n+1}}{T}\right)e_{q}^{n+1}\left(\boldsymbol{B}^{n+1}\times\boldsymbol{u}^{n},e_{\boldsymbol{J}}^{n+1}\right)+\kappa\exp(1)\left|e_{q}^{n+1}\right|\left\Vert \boldsymbol{u}^{n}-\boldsymbol{u}\left(t^{n+1}\right)\right\Vert _{0,6}\left\Vert \boldsymbol{B}^{n+1}\right\Vert _{0,3}\left\Vert \boldsymbol{J}\left(t^{n+1}\right)\right\Vert \nonumber \\
 & \le-\kappa\exp\left(\frac{t^{n+1}}{T}\right)e_{q}^{n+1}\left(\boldsymbol{B}^{n+1}\times\boldsymbol{u}^{n},e_{\boldsymbol{J}}^{n+1}\right)+C\left|e_{q}^{n+1}\right|\left\Vert \nabla\left(e_{\boldsymbol{u}}^{n}+\boldsymbol{u}\left(t^{n}\right)-\boldsymbol{u}\left(t^{n+1}\right)\right)\right\Vert \left\Vert \boldsymbol{B}^{n+1}\right\Vert _{0,3}\left\Vert \boldsymbol{J}\left(t^{n+1}\right)\right\Vert \nonumber \\
 & \le-\kappa\exp\left(\frac{t^{n+1}}{T}\right)e_{q}^{n+1}\left(\boldsymbol{B}^{n+1}\times\boldsymbol{u}^{n},e_{\boldsymbol{J}}^{n+1}\right)+\frac{1}{8T}\left|e_{q}^{n+1}\right|^{2}+C\left\Vert \boldsymbol{B}^{n+1}\right\Vert _{0,3}^{2}\left\Vert \boldsymbol{J}\left(t^{n+1}\right)\right\Vert ^{2}\left\Vert \nabla e_{\boldsymbol{u}}^{n}\right\Vert ^{2}\nonumber \\
 & \quad+C\tau\left\Vert \boldsymbol{B}^{n+1}\right\Vert _{0,3}^{2}\left\Vert \boldsymbol{J}\left(t^{n+1}\right)\right\Vert ^{2}\int_{t^{n}}^{t^{n+1}}\left\Vert \nabla\boldsymbol{u}_{t}\right\Vert ^{2}{\rm d}t.\label{eq:errorqIV}
\end{align}
Combining (\ref{eq:errorqI}) with (\ref{eq:truncq})-(\ref{eq:errorqIV})
leads to the desired result.
\end{proof}
\begin{rem}
Stability estimates for the discrete solutions play a key role in the error analysis. Such type of error estimates for MHD model can been found in \cite{Li2021}. However, the
proof therein seems to be not complete and there are some minor typos in the similar estimate of \eqref{eq:errorqII1}. Here we correct it and give a rigorous proof.
\end{rem}
\medskip{}

Now we are in the position to prove Theorem \ref{thm:erroruJq} by
using Lemmas \ref{lem:erroru}-\ref{lem:errorq}.
\begin{proof}[Proof of Theorem \ref{thm:erroruJq}.]
 Summing up (\ref{eq:error_u}), (\ref{eq:error_J}) and (\ref{eq:error_q})
leads to
\begin{equation}
\begin{aligned} & \frac{\left\Vert e_{\boldsymbol{u}}^{n+1}\right\Vert ^{2}-\left\Vert e_{\boldsymbol{u}}^{n}\right\Vert ^{2}+\left\Vert e_{\boldsymbol{u}}^{n+1}-e_{\boldsymbol{u}}^{n}\right\Vert ^{2}}{2\tau}+\frac{R_{e}^{-1}}{2}\left\Vert \nabla e_{\boldsymbol{u}}^{n+1}\right\Vert ^{2}+\frac{\kappa}{2}\left\Vert e_{\boldsymbol{J}}^{n+1}\right\Vert ^{2}\\
 & +\frac{\left|e_{q}^{n+1}\right|^{2}-\left|e_{q}^{n}\right|^{2}+\left|e_{q}^{n+1}-e_{q}^{n}\right|^{2}}{2\tau}+\frac{1}{2T}\left|e_{q}^{n+1}\right|^{2}\\
 & \le\frac{1}{4k_{2}}\left\Vert \nabla\boldsymbol{u}^{n}\right\Vert ^{2}|e_{q}^{n+1}|^{2}+C\left(1+\left\Vert \nabla e_{\boldsymbol{u}}^{n}\right\Vert ^{2}\right)\left\Vert e_{\boldsymbol{u}}^{n}\right\Vert ^{2}+C\left\Vert \nabla e_{\boldsymbol{u}}^{n}\right\Vert ^{2}+C\left\Vert e_{\boldsymbol{J}}^{n}\right\Vert ^{2}\\
 & \quad+C\tau\int_{t^{n}}^{t^{n+1}}\left(\left\Vert \boldsymbol{u}_{t}\right\Vert _{1+s}^{2}+\left\Vert \boldsymbol{u}_{tt}\right\Vert _{-1}^{2}+\left|q_{tt}\right|^{2}+\left\Vert \boldsymbol{J}_{t}\right\Vert ^{2}\right){\rm d}t.
\end{aligned}
\label{eq:errorthm}
\end{equation}
We will prove the error estimate by mathematical induction argument.
For the case of $k=m=0$, by using (\ref{eq:errorthm}), we have 
\begin{align*}
 & \frac{\left\Vert e_{\boldsymbol{u}}^{1}\right\Vert ^{2}-\left\Vert e_{\boldsymbol{u}}^{0}\right\Vert ^{2}+\left\Vert e_{\boldsymbol{u}}^{1}-e_{\boldsymbol{u}}^{0}\right\Vert ^{2}}{2\tau}+\frac{\left|e_{q}^{1}\right|^{2}-\left|e_{q}^{0}\right|^{2}+\left|e_{q}^{1}-e_{q}^{0}\right|^{2}}{2\tau}\\
 & \quad+\frac{R_{e}^{-1}}{2}\left\Vert \nabla e_{\boldsymbol{u}}^{1}\right\Vert ^{2}+\tau\kappa\left\Vert e_{\boldsymbol{J}}^{1}\right\Vert ^{2}+\frac{1}{2T}\left|e_{q}^{1}\right|^{2}\\
 & \le\frac{1}{4k_{2}}\left\Vert \nabla\boldsymbol{u}^{0}\right\Vert ^{2}|e_{q}^{1}|^{2}+C\left\Vert e_{\boldsymbol{u}}^{0}\right\Vert ^{2}+C\left\Vert \nabla e_{\boldsymbol{u}}^{0}\right\Vert ^{2}+C\left\Vert e_{\boldsymbol{J}}^{0}\right\Vert ^{2}+C\tau.
\end{align*}
Invoking with $e_{\boldsymbol{u}}^{0}=e_{\boldsymbol{J}}^{0}=\boldsymbol{0}$
and (\ref{eq:gradu_uniform}), we can easily get
\begin{align*}
 & \left\Vert e_{\boldsymbol{u}}^{1}\right\Vert ^{2}+\left|e_{q}^{1}\right|^{2}+\tau R_{e}^{-1}\left\Vert \nabla e_{\boldsymbol{u}}^{1}\right\Vert ^{2}+\tau\kappa\left\Vert e_{\boldsymbol{J}}^{1}\right\Vert ^{2}\\
 & \qquad+\left\Vert e_{\boldsymbol{u}}^{1}-e_{\boldsymbol{u}}^{0}\right\Vert ^{2}+\left|e_{q}^{1}-e_{q}^{0}\right|^{2}\leq C\tau^{2}.
\end{align*}
This means that the error estimate (\ref{eq:estuJq}) holds for $m=0$.
For $0<k\le m-1$, we assume that the error estimate (\ref{eq:estuJq})
is valid. We will show it is also valid for $k=m$. We first deduce
a bound for $\left|e_{q}^{m^{*}+1}\right|$, where $m^{*}$ is the
time step such that 
\begin{equation}
\left|e_{q}^{m^{*}+1}\right|=\max_{0\leq n\leq m}\left|e_{q}^{n+1}\right|.\label{eq:eqmax}
\end{equation}
Multiplying (\ref{eq:errorthm}) by $2\tau$, summing up over $n$
from 0 to $m^{*}$, using (\ref{eq:eqmax}) and the recursive hypothesis,
we get
\begin{align}
 & \left\Vert e_{\boldsymbol{u}}^{m^{*}+1}\right\Vert ^{2}+\tau\sum_{n=0}^{m^{*}}\left(R_{e}^{-1}\left\Vert \nabla e_{\boldsymbol{u}}^{n+1}\right\Vert ^{2}+\kappa\left\Vert e_{\boldsymbol{J}}^{n+1}\right\Vert ^{2}\right)+|e_{q}^{m^{*}+1}|^{2}+\frac{\tau}{T}\sum_{n=0}^{m^{*}}|e_{q}^{n+1}|^{2}\nonumber \\
 & \quad+\stackrel[n=0]{m^{*}}{\sum}\left\Vert e_{\boldsymbol{u}}^{n+1}-e_{\boldsymbol{u}}^{n}\right\Vert ^{2}+\stackrel[n=0]{m^{*}}{\sum}\left|e_{q}^{n+1}-e_{q}^{n}\right|^{2}\nonumber \\
 & \le\frac{\tau}{2k_{2}}\sum_{n=0}^{m^{*}}\left\Vert \nabla\boldsymbol{u}^{n}\right\Vert ^{2}|e_{q}^{m^{*}+1}|^{2}+C\tau\sum_{n=0}^{m^{*}}\left(1+\left\Vert \nabla e_{\boldsymbol{u}}^{n}\right\Vert ^{2}\right)\left\Vert e_{\boldsymbol{u}}^{n}\right\Vert ^{2}+C\tau\sum_{n=0}^{m^{*}}\left\Vert \nabla e_{\boldsymbol{u}}^{n}\right\Vert ^{2}\nonumber \\
 & \quad+C\tau\stackrel[n=0]{m^{*}}{\sum}\left\Vert e_{\boldsymbol{J}}^{n}\right\Vert ^{2}+C\tau\int_{0}^{t^{m^{*}+1}}\left(\left\Vert \boldsymbol{u}_{t}\right\Vert _{2}^{2}+\left\Vert \boldsymbol{u}_{tt}\right\Vert _{-1}^{2}+\left\Vert q_{tt}\right\Vert ^{2}\right){\rm d}t,\nonumber \\
 & \le\frac{\tau}{2k_{2}}\sum_{n=0}^{m^{*}}\left\Vert \nabla\boldsymbol{u}^{n}\right\Vert ^{2}|e_{q}^{m^{*}+1}|^{2}+C\tau\sum_{n=0}^{m^{*}}\left(1+\left\Vert \nabla e_{\boldsymbol{u}}^{n}\right\Vert ^{2}\right)\left\Vert e_{\boldsymbol{u}}^{n}\right\Vert ^{2}+C\tau^{2}\nonumber \\
 & \quad+C\tau\int_{0}^{t^{m^{*}+1}}\left(\left\Vert \boldsymbol{u}_{t}\right\Vert _{2}^{2}+\left\Vert \boldsymbol{u}_{tt}\right\Vert _{-1}^{2}+\left\Vert q_{tt}\right\Vert ^{2}\right){\rm d}t.\label{eq:errorthm1}
\end{align}
It follows from (\ref{eq:gradu_uniform}) that
\[
\frac{\tau}{2k_{2}}\sum_{n=0}^{m^{*}}\left\Vert \nabla\boldsymbol{u}^{n}\right\Vert ^{2}|e_{q}^{m^{*}+1}|^{2}\le\frac{1}{2}\left|e_{q}^{m^{*}+1}\right|^{2},\quad\tau\sum_{n=0}^{m^{*}}\left(1+\left\Vert \nabla e_{\boldsymbol{u}}^{n}\right\Vert ^{2}\right)\le C.
\]
Invoking with the discrete Gronwall inequality in Lemma \ref{lem: gronwall2},
we obtain
\begin{align}
 & \left\Vert e_{\boldsymbol{u}}^{m^{*}+1}\right\Vert ^{2}+\tau\sum_{n=0}^{m^{*}}\left(R_{e}^{-1}\left\Vert \nabla e_{\boldsymbol{u}}^{n+1}\right\Vert ^{2}+\kappa\left\Vert e_{\boldsymbol{J}}^{n+1}\right\Vert ^{2}\right)+|e_{q}^{m^{*}+1}|^{2}+\frac{\tau}{T}\sum_{n=0}^{m^{*}}|e_{q}^{n+1}|^{2}\nonumber \\
 & \le C\tau^{2}\int_{0}^{t^{m^{*}+1}}\left(\left\Vert \boldsymbol{u}_{t}\right\Vert _{2}^{2}+\left\Vert \boldsymbol{u}_{tt}\right\Vert _{-1}^{2}+\left\Vert q_{tt}\right\Vert ^{2}\right){\rm d}t.\label{eq:errorthm2}
\end{align}
Now we turn to (\ref{eq:errorthm}), multiply it by $2\tau$ and sum
up over $n$ from 0 to $m$, and using (\ref{eq:eqmax}),
\begin{align}
 & \left\Vert e_{\boldsymbol{u}}^{m+1}\right\Vert ^{2}+\left|e_{q}^{m+1}\right|^{2}+\tau R_{e}^{-1}\stackrel[n=0]{m}{\sum}\left\Vert \nabla e_{\boldsymbol{u}}^{n+1}\right\Vert ^{2}+\tau\kappa\stackrel[n=0]{m}{\sum}\left\Vert e_{\boldsymbol{J}}^{n+1}\right\Vert ^{2}\nonumber \\
 & \quad+\stackrel[n=0]{m}{\sum}\left\Vert e_{\boldsymbol{u}}^{n+1}-e_{\boldsymbol{u}}^{n}\right\Vert ^{2}+\stackrel[n=0]{m}{\sum}\left|e_{q}^{n+1}-e_{q}^{n}\right|^{2}\nonumber \\
 & \le\frac{\tau}{2k_{2}}\sum_{n=0}^{m}\left\Vert \nabla\boldsymbol{u}^{n}\right\Vert ^{2}|e_{q}^{m+1}|^{2}+C\tau\sum_{n=0}^{m}\left(1+\left\Vert \nabla e_{\boldsymbol{u}}^{n}\right\Vert ^{2}\right)\left\Vert e_{\boldsymbol{u}}^{n}\right\Vert ^{2}+C\tau\sum_{n=0}^{m}\left\Vert \nabla e_{\boldsymbol{u}}^{n}\right\Vert ^{2}\nonumber \\
 & \quad+C\tau\stackrel[n=0]{m^{*}}{\sum}\left\Vert e_{\boldsymbol{J}}^{n}\right\Vert ^{2}+C\tau\int_{0}^{t^{m+1}}\left(\left\Vert \boldsymbol{u}_{t}\right\Vert _{2}^{2}+\left\Vert \boldsymbol{u}_{tt}\right\Vert _{-1}^{2}+\left\Vert q_{tt}\right\Vert ^{2}\right){\rm d}t.\nonumber \\
 & \le\frac{\tau}{2k_{2}}\sum_{n=0}^{m}\left\Vert \nabla\boldsymbol{u}^{n}\right\Vert ^{2}|e_{q}^{m+1}|^{2}+C\tau\sum_{n=0}^{m}\left(1+\left\Vert \nabla e_{\boldsymbol{u}}^{n}\right\Vert ^{2}\right)\left\Vert e_{\boldsymbol{u}}^{n}\right\Vert ^{2}+C\tau^{2}\nonumber \\
 & \quad+C\tau\int_{0}^{t^{m+1}}\left(\left\Vert \boldsymbol{u}_{t}\right\Vert _{2}^{2}+\left\Vert \boldsymbol{u}_{tt}\right\Vert _{-1}^{2}+\left\Vert q_{tt}\right\Vert ^{2}\right){\rm d}t.\label{eq:errorthm3}
\end{align}
From (\ref{eq:gradu_uniform}) again, we have that
\[
\tau\sum_{n=0}^{m}\left(1+\left\Vert \nabla e_{\boldsymbol{u}}^{n}\right\Vert ^{2}\right)\le C.
\]
Using (\ref{eq:errorthm2}) and the discrete Gronwall inequality in
Lemma \ref{lem: gronwall2} to (\ref{eq:errorthm3}), we complete
the proof.
\end{proof}

\subsection{Error estimates for the pressure and electric potential}

This section is devoted to presenting error estimates for the pressure
and electric potential. We first deduce the error estimates for the
electric potential.
\begin{thm}
Under the assumption of Theorem \ref{thm:erroruJq}, the following
error estimate holds for $\ 0\leq m\leq N-1$,
\begin{equation}
\tau\sum_{n=0}^{m}\left\Vert e_{\phi}^{n+1}\right\Vert ^{2}\leq C\tau^{2}.\label{eq:error_phi}
\end{equation}
\end{thm}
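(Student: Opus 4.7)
The plan is to deduce the $L^2$-estimate of $e_\phi^{n+1}$ from the error equation \eqref{eq:errorJ} via a standard inf-sup duality argument, and then to sum the resulting pointwise-in-time bound and invoke Theorem \ref{thm:erroruJq}. Since $e_\phi^{n+1}\in L_0^2(\Omega)$, there exists $\boldsymbol{w}^{n+1}\in\boldsymbol{H}_0^1(\Omega)$ such that $\nabla\cdot\boldsymbol{w}^{n+1}=-e_\phi^{n+1}$ and $\|\boldsymbol{w}^{n+1}\|_1\le C\|e_\phi^{n+1}\|$. Integration by parts then gives
\[
\|e_\phi^{n+1}\|^2=-\bigl(e_\phi^{n+1},\nabla\cdot\boldsymbol{w}^{n+1}\bigr)=\bigl(\nabla e_\phi^{n+1},\boldsymbol{w}^{n+1}\bigr).
\]

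Next, I would take the $\boldsymbol{L}^2$-inner product of the error equation \eqref{eq:errorJ} with $\boldsymbol{w}^{n+1}$ to obtain
\[
\bigl(\nabla e_\phi^{n+1},\boldsymbol{w}^{n+1}\bigr)=-\bigl(e_{\boldsymbol{J}}^{n+1},\boldsymbol{w}^{n+1}\bigr)+\exp\!\left(\tfrac{t^{n+1}}{T}\right)\!\bigl(q^{n+1}\boldsymbol{u}^{n}\times\boldsymbol{B}^{n+1}-q(t^{n+1})\boldsymbol{u}(t^{n+1})\times\boldsymbol{B}^{n+1},\boldsymbol{w}^{n+1}\bigr).
\]
I would then split the coupling difference as
\[
q^{n+1}\boldsymbol{u}^{n}-q(t^{n+1})\boldsymbol{u}(t^{n+1})=q^{n+1}e_{\boldsymbol{u}}^{n}+q^{n+1}\bigl(\boldsymbol{u}(t^{n})-\boldsymbol{u}(t^{n+1})\bigr)+e_q^{n+1}\boldsymbol{u}(t^{n+1}),
\]
and bound the resulting terms using H\"older's inequality with $\boldsymbol{B}^{n+1}\in\boldsymbol{L}^3$, the Sobolev embedding $H^1\hookrightarrow L^6$ for $\boldsymbol{w}^{n+1}$, the uniform stability $|q^{n+1}|\le C$ from \eqref{eq:u_uniform}, and $\boldsymbol{u}(t^n)-\boldsymbol{u}(t^{n+1})=-\int_{t^n}^{t^{n+1}}\boldsymbol{u}_t\,dt$. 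Combining with $\|\boldsymbol{w}^{n+1}\|_1\le C\|e_\phi^{n+1}\|$ and dividing by $\|e_\phi^{n+1}\|$ yields the pointwise bound
\[
\|e_\phi^{n+1}\|^2\le C\left(\|e_{\boldsymbol{J}}^{n+1}\|^2+\|\nabla e_{\boldsymbol{u}}^{n}\|^2+|e_q^{n+1}|^2+\tau\!\int_{t^n}^{t^{n+1}}\!\|\nabla\boldsymbol{u}_t\|^2\,dt\right).
\]

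Finally, I would multiply by $\tau$ and sum from $n=0$ to $m$. By Theorem \ref{thm:erroruJq}, the partial sums $\tau\sum\|e_{\boldsymbol{J}}^{n+1}\|^2$, $\tau\sum\|\nabla e_{\boldsymbol{u}}^{n}\|^2$ and $\tau\sum|e_q^{n+1}|^2$ are each of order $\tau^2$, while the last term contributes $C\tau^2\|\boldsymbol{u}_t\|_{L^2(0,T;\boldsymbol{H}^1)}^2$, giving the claimed estimate \eqref{eq:error_phi}. The main technical point to be careful about is the choice of norms in the coupling term: because $\boldsymbol{B}$ is only assumed in $\boldsymbol{L}^\infty(0,T;\boldsymbol{L}^3(\Omega))$, one must put the $L^6$-norm on the test function $\boldsymbol{w}^{n+1}$ (hence use $H^1$-control and the inf-sup bound) and the $L^2$- or $L^6$-norm on the velocity factors, so that all the factors arising on the right-hand side are controlled by quantities already estimated in Theorem \ref{thm:erroruJq} without introducing any higher-order regularity assumption.
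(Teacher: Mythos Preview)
Your argument is correct and follows the same overall strategy as the paper: test the error equation \eqref{eq:errorJ} against a suitable vector field, invoke an inf--sup/Bogovski\u{\i} type bound to recover $\|e_\phi^{n+1}\|$, and then sum and apply Theorem~\ref{thm:erroruJq}. The paper phrases this via the inf--sup condition on $\boldsymbol{D}=\boldsymbol{H}_0(\mathrm{div},\Omega)$ rather than constructing $\boldsymbol{w}^{n+1}\in\boldsymbol{H}_0^1(\Omega)$, but these are equivalent devices.

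There is one small but genuine difference in the decomposition of the coupling term. The paper writes
\[
q^{n+1}\boldsymbol{u}^{n}-q(t^{n+1})\boldsymbol{u}(t^{n+1})
= e_q^{n+1}\boldsymbol{u}^{n}+q(t^{n+1})\bigl(\boldsymbol{u}^{n}-\boldsymbol{u}(t^{n+1})\bigr),
\]
which produces a term $|e_q^{n+1}|\,\|\nabla\boldsymbol{u}^{n}\|$ involving the \emph{discrete} velocity; the paper then appeals to Lemma~\ref{lem:est_du} (hence implicitly to its stronger regularity hypotheses) to control $\|\nabla\boldsymbol{u}^{n}\|$ uniformly. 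Your splitting instead pairs $e_q^{n+1}$ with the exact solution $\boldsymbol{u}(t^{n+1})$, so the corresponding factor is bounded directly by the assumed regularity $\boldsymbol{u}\in L^\infty(0,T;\boldsymbol{H}^{1+s})$ and no recourse to Lemma~\ref{lem:est_du} is needed. In that sense your route is slightly cleaner and fully consistent with the hypotheses stated in the theorem.
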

\begin{proof}
\noindent By taking the inner product of (\ref{eq:errorJ}) with $\boldsymbol{K}\in\boldsymbol{D}$
and using the similar arguments in (\ref{eq:errorJI}), we obtain
\begin{align*}
\left(e_{\phi}^{n+1},{\rm div}\boldsymbol{K}\right) & =\left(e_{\boldsymbol{J}}^{n+1},\boldsymbol{K}\right)-\exp\left(\frac{t^{n+1}}{T}\right)\left(q^{n+1}\boldsymbol{u}^{n}\times\boldsymbol{B}^{n+1}-q\left(t^{n+1}\right)\boldsymbol{u}\left(t^{n+1}\right)\times\boldsymbol{B}^{n+1},\boldsymbol{K}\right)\\
 & \le\left\Vert e_{\boldsymbol{J}}^{n+1}\right\Vert \left\Vert \boldsymbol{K}\right\Vert +\exp(1)\left|e_{q}^{n+1}\right|\left\Vert \boldsymbol{u}^{n}\right\Vert _{0,6}\left\Vert \boldsymbol{B}^{n+1}\right\Vert _{0,3}\left\Vert \boldsymbol{K}\right\Vert +\left\Vert \boldsymbol{u}\left(t^{n+1}\right)-\boldsymbol{u}^{n}\right\Vert _{0,6}\left\Vert \boldsymbol{B}^{n+1}\right\Vert _{0,3}\left\Vert \boldsymbol{K}\right\Vert \\
 & \le\left\Vert e_{\boldsymbol{J}}^{n+1}\right\Vert \left\Vert \boldsymbol{K}\right\Vert _{{\rm div}}+C\left|e_{q}^{n+1}\right|\left\Vert \nabla\boldsymbol{u}^{n}\right\Vert \left\Vert \boldsymbol{B}^{n+1}\right\Vert _{0,3}\left\Vert \boldsymbol{K}\right\Vert _{{\rm div}}+C\left\Vert \nabla\left(\boldsymbol{u}\left(t^{n+1}\right)-\boldsymbol{u}^{n}\right)\right\Vert \left\Vert \boldsymbol{B}^{n+1}\right\Vert _{0,3}\left\Vert \boldsymbol{K}\right\Vert _{{\rm div}}\\
 & \le\left\Vert e_{\boldsymbol{J}}^{n+1}\right\Vert \left\Vert \boldsymbol{K}\right\Vert _{{\rm div}}+C\left|e_{q}^{n+1}\right|\left\Vert \nabla\boldsymbol{u}^{n}\right\Vert \left\Vert \boldsymbol{B}^{n+1}\right\Vert _{0,3}\left\Vert \boldsymbol{K}\right\Vert _{{\rm div}}+C\left\Vert \nabla e_{\boldsymbol{u}}^{n}\right\Vert \left\Vert \boldsymbol{B}^{n+1}\right\Vert _{0,3}\left\Vert \boldsymbol{K}\right\Vert _{{\rm div}}\\
 & \quad+C\tau^{\frac{1}{2}}\left(\int_{t^{n}}^{t^{n+1}}\left\Vert \nabla\boldsymbol{u}_{t}\right\Vert ^{2}{\rm d}t\right)^{\frac{1}{2}}\left\Vert \boldsymbol{B}^{n+1}\right\Vert _{0,3}\left\Vert \boldsymbol{K}\right\Vert _{{\rm div}}.
\end{align*}
Using Theorem \ref{thm:erroruJq}, Lemma \ref{lem:est_du} and the
inf-sup condition, 
\[
\beta_{m}\left\Vert e_{\phi}^{n+1}\right\Vert \leq\sup_{\boldsymbol{K}\in\boldsymbol{D}}\frac{\left(e_{\phi}^{n+1},{\rm div}\boldsymbol{K}\right)}{\|\boldsymbol{K}\|_{{\rm div}}},
\]

\noindent we have
\begin{align*}
\tau\sum_{n=0}^{m}\left\Vert e_{\phi}^{n+1}\right\Vert ^{2} & \le C\left(\tau\sum_{n=0}^{m}\left\Vert e_{\boldsymbol{J}}^{n+1}\right\Vert ^{2}+\tau\sum_{n=0}^{m}\left|e_{q}^{n+1}\right|^{2}+\tau\sum_{n=0}^{m}\left\Vert \nabla e_{\boldsymbol{u}}^{n}\right\Vert ^{2}+\tau\int_{0}^{t^{m}}\left\Vert \nabla\boldsymbol{u}_{t}\right\Vert ^{2}{\rm d}t\right)\\
 & \le C\tau^{2}.
\end{align*}
This completes the proof.
\end{proof}
\medskip{}

To derive error estimate for the pressure, we need to establish the
estimate for $\delta_{t}e_{\boldsymbol{u}}^{n+1}$.
\begin{lem}
\label{lem:est_du}Assuming $\boldsymbol{u}\in H^{2}(0,T;\boldsymbol{H}^{2}(\Omega))\bigcap H^{1}(0,T;\boldsymbol{H}^{2}(\Omega))\bigcap L^{\infty}(0,T;\boldsymbol{H}^{2}(\Omega))$,
$\boldsymbol{J}\in L^{\infty}(0,T;\boldsymbol{L}^{2}(\Omega))\cap H^{1}(0,T;\boldsymbol{L}^{2}(\Omega))$
and $\boldsymbol{B}\in L^{\infty}(0,T;\boldsymbol{L}^{\infty}(\Omega))$,
then we have the following error estimate for $\ 0\leq m\leq N-1$,
\begin{equation}
\left\Vert \nabla e_{\boldsymbol{u}}^{m+1}\right\Vert ^{2}+\tau\sum\limits _{n=0}^{m}\left\Vert \delta_{t}e_{\boldsymbol{u}}^{n+1}\right\Vert ^{2}+\nu\tau\sum\limits _{n=0}^{m}\left\Vert Ae_{\boldsymbol{u}}^{n+1}\right\Vert ^{2}\le C\tau^{2}.\label{eq:est_du}
\end{equation}
\end{lem}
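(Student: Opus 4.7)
The plan is to test the momentum error equation (\ref{eq:erroru}) in $\boldsymbol{L}^{2}$ against $\delta_{t}e_{\boldsymbol{u}}^{n+1}$. Since (\ref{eq:errorp}) gives $\nabla\cdot\delta_{t}e_{\boldsymbol{u}}^{n+1}=0$, the pressure term drops out, and applying the identity $(\nabla a,\nabla(a-b))=\tfrac{1}{2}(\|\nabla a\|^{2}-\|\nabla b\|^{2}+\|\nabla(a-b)\|^{2})$ to the Laplacian part produces
\[
\|\delta_{t}e_{\boldsymbol{u}}^{n+1}\|^{2}+\tfrac{R_{e}^{-1}}{2\tau}\bigl(\|\nabla e_{\boldsymbol{u}}^{n+1}\|^{2}-\|\nabla e_{\boldsymbol{u}}^{n}\|^{2}+\|\nabla(e_{\boldsymbol{u}}^{n+1}-e_{\boldsymbol{u}}^{n})\|^{2}\bigr)=\mathrm{RHS}.
\]
The right-hand side carries four pieces: the truncation term $(R_{\boldsymbol{u}}^{n+1},\delta_{t}e_{\boldsymbol{u}}^{n+1})$, handled by Cauchy\textendash Schwarz; the convective difference split as in (\ref{eq:erroruII}); the coupling-term difference split as in (\ref{eq:erroruIII}); and the SAV-error contribution proportional to $e_{q}^{n+1}$.

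For each piece I would apply a Young-type splitting that absorbs $\tfrac{1}{4}\|\delta_{t}e_{\boldsymbol{u}}^{n+1}\|^{2}$ on the left. The convective terms like $(\boldsymbol{u}^{n}\cdot\nabla(\boldsymbol{u}(t^{n+1})-\boldsymbol{u}^{n}),\delta_{t}e_{\boldsymbol{u}}^{n+1})$ are controlled via the 2D estimates (\ref{eq:e_trilinear_form2d1})\textendash (\ref{eq:e_trilinear_form2d2}) together with the assumed $\boldsymbol{H}^{2}$-regularity on $\boldsymbol{u}$; similarly the piece involving $e_{\boldsymbol{u}}^{n}\cdot\nabla e_{\boldsymbol{u}}^{n}$ uses the $\boldsymbol{L}^{4}$-$\boldsymbol{L}^{4}$ Ladyzhenskaya bound and the already-established $\tau$-smallness of $\|\nabla e_{\boldsymbol{u}}^{n}\|$ from Theorem \ref{thm:erroruJq}. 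The coupling differences are treated with H\"older and the strengthened hypothesis $\boldsymbol{B}\in L^{\infty}(0,T;\boldsymbol{L}^{\infty}(\Omega))$, yielding factors of $\|\nabla e_{\boldsymbol{u}}^{n}\|^{2}$, $\|e_{\boldsymbol{J}}^{n}\|^{2}$, and $|e_{q}^{n+1}|^{2}$ that are all already $O(\tau^{2})$-integrable in $n$.

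After summing from $n=0$ to $m$ and using $e_{\boldsymbol{u}}^{0}=\boldsymbol{0}$, what remains is an inequality of the form
\[
\|\nabla e_{\boldsymbol{u}}^{m+1}\|^{2}+\tau\sum_{n=0}^{m}\|\delta_{t}e_{\boldsymbol{u}}^{n+1}\|^{2}\le C\tau^{2}+C\tau\sum_{n=0}^{m}\alpha_{n}\|\nabla e_{\boldsymbol{u}}^{n}\|^{2},
\]
with $\tau\sum\alpha_{n}\le C$ uniformly thanks to (\ref{eq:gradu_uniform}). The discrete Gronwall Lemma \ref{lem: gronwall2} closes the estimate for the first two terms. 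For the $\|A e_{\boldsymbol{u}}^{n+1}\|^{2}$ bound, I would view (\ref{eq:erroru})\textendash (\ref{eq:errorp}) as a stationary Stokes problem for $(e_{\boldsymbol{u}}^{n+1},e_{p}^{n+1})$ with forcing $F^{n+1}:=-\delta_{t}e_{\boldsymbol{u}}^{n+1}+R_{\boldsymbol{u}}^{n+1}+(\text{nonlinear and coupling differences})$. Standard Stokes regularity then gives $\|Ae_{\boldsymbol{u}}^{n+1}\|\le C\|F^{n+1}\|$, and bounding $\|F^{n+1}\|^{2}$ in $\ell^{1}_\tau$ using the $\boldsymbol{H}^{2}$-regularity of $\boldsymbol{u}$ and the previously proved $O(\tau^{2})$ estimates produces $\nu\tau\sum\|Ae_{\boldsymbol{u}}^{n+1}\|^{2}\le C\tau^{2}$.

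The main obstacle is the convective-difference term paired with $\delta_{t}e_{\boldsymbol{u}}^{n+1}$: since $\delta_{t}e_{\boldsymbol{u}}^{n+1}$ lives only in $\boldsymbol{L}^{2}$ we cannot invoke skew-symmetry to shift a derivative, so the $\boldsymbol{H}^{2}$-regularity of $\boldsymbol{u}(t^{n+1})$ and the 2D inequalities (\ref{eq:e_trilinear_form2d1})\textendash (\ref{eq:e_trilinear_form2d2}) must be used carefully. The challenge is to steer the resulting cubic gradient products so that every factor beyond $\|\nabla e_{\boldsymbol{u}}^{n+1}\|^{2}$ and $\|\delta_{t}e_{\boldsymbol{u}}^{n+1}\|^{2}$ is either uniformly bounded or $\tau$-summable, so that Gronwall closes without picking up the very quantity we are trying to bound.
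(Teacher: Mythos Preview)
Your two–step strategy (test with $\delta_t e_{\boldsymbol u}^{n+1}$, then recover $\|Ae_{\boldsymbol u}^{n+1}\|$ by Stokes regularity) differs from the paper's approach, and the way you describe step~1 has a genuine gap. After splitting the convective difference as in (\ref{eq:erroruII}), you must bound $(\boldsymbol u^{n}\!\cdot\!\nabla e_{\boldsymbol u}^{n},\,\delta_t e_{\boldsymbol u}^{n+1})$ (it appears both in the $e_q^{n+1}$-weighted piece and in $\boldsymbol u^{n}\!\cdot\!\nabla(\boldsymbol u(t^{n+1})-\boldsymbol u^{n})$). Since $\delta_t e_{\boldsymbol u}^{n+1}$ is only in $\boldsymbol L^{2}$, every route you cite forces second derivatives of $e_{\boldsymbol u}^{n}$: inequality (\ref{eq:e_trilinear_form2d1}) puts $\|Ae_{\boldsymbol u}^{n}\|^{1/2}$ on the right, (\ref{eq:e_trilinear_form2d2}) puts $\|A\boldsymbol u^{n}\|^{1/2}$ (uncontrolled), and the $L^{4}$--$L^{4}$ Ladyzhenskaya estimate on $e_{\boldsymbol u}^{n}\!\cdot\!\nabla e_{\boldsymbol u}^{n}$ needs $\|\nabla e_{\boldsymbol u}^{n}\|_{L^{4}}\le C\|\nabla e_{\boldsymbol u}^{n}\|^{1/2}\|Ae_{\boldsymbol u}^{n}\|^{1/2}$. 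So the clean post-summation inequality you write, with right-hand side $C\tau^{2}+C\tau\sum\alpha_n\|\nabla e_{\boldsymbol u}^{n}\|^{2}$, is not what you actually get; an $\|Ae_{\boldsymbol u}^{n}\|$ contamination survives and Gronwall alone does not close step~1. Step~2 then inherits the same term inside $\|F^{n+1}\|^{2}$, so as written the two steps are circular.

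The paper avoids this by testing (\ref{eq:erroru}) against $Ae_{\boldsymbol u}^{n+1}+\delta_t e_{\boldsymbol u}^{n+1}$ in a single pass. That places $R_e^{-1}\|Ae_{\boldsymbol u}^{n+1}\|^{2}$ on the left together with $\|\delta_t e_{\boldsymbol u}^{n+1}\|^{2}$ and the telescoping $\|\nabla e_{\boldsymbol u}^{n+1}\|^{2}-\|\nabla e_{\boldsymbol u}^{n}\|^{2}$; the terms $\tfrac{\nu}{8}\|Ae_{\boldsymbol u}^{n}\|^{2}$ produced on the right by (\ref{eq:e_trilinear_form2d1}) are then absorbed after summing in $n$, and one Gronwall step finishes. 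The missing idea in your plan is precisely this simultaneous testing, which creates an absorbable $\|Ae_{\boldsymbol u}^{n+1}\|^{2}$ on the left rather than deferring the $A$-bound to an independent Stokes-regularity argument.
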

\begin{proof}
First of all, in virtue of (\ref{eq:estuJq}), we have
\[
\left\Vert \nabla e_{\boldsymbol{u}}^{n+1}\right\Vert ^{2}\le\tau^{-1}\left(\tau\sum_{k=0}^{m}\left\Vert \nabla e_{\boldsymbol{u}}^{k+1}\right\Vert ^{2}\right)\leq C\tau,\quad\left\Vert e_{\boldsymbol{J}}^{n+1}\right\Vert ^{2}\le\tau^{-1}\left(\tau\sum_{k=0}^{m}\left\Vert e_{\boldsymbol{J}}^{k+1}\right\Vert ^{2}\right)\leq C\tau.
\]
Hence, there holds that
\begin{align}
\left\Vert \nabla\boldsymbol{u}^{n+1}\right\Vert  & \leq\left\Vert \nabla e_{\boldsymbol{u}}^{n+1}\right\Vert +\left\Vert \nabla\boldsymbol{u}\left(t^{n+1}\right)\right\Vert \leq C\left(\tau^{1/2}+\left\Vert \nabla\boldsymbol{u}\left(t^{n+1}\right)\right\Vert \right),\label{eq:gradu}\\
\left\Vert \boldsymbol{J}^{n+1}\right\Vert  & \leq\left\Vert e_{\boldsymbol{J}}^{n+1}\right\Vert +\left\Vert \boldsymbol{J}\left(t^{n+1}\right)\right\Vert \leq C\left(\tau^{1/2}+\left\Vert \boldsymbol{J}\left(t^{n+1}\right)\right\Vert \right).\label{eq:J}
\end{align}
Taking the inner product of (\ref{eq:erroru}) with $Ae_{\boldsymbol{u}}^{n+1}+\delta_{t}e_{\boldsymbol{u}}^{n+1}$,
we obtain 
\begin{align}
 & (1+R_{e}^{-1})\frac{\left\Vert \nabla e_{\boldsymbol{u}}^{n+1}\right\Vert ^{2}-\left\Vert \nabla e_{\boldsymbol{u}}^{n}\right\Vert ^{2}+\left\Vert \nabla e_{\boldsymbol{u}}^{n+1}-\nabla e_{\boldsymbol{u}}^{n}\right\Vert ^{2}}{2\tau}+\left\Vert \delta_{t}e_{\boldsymbol{u}}^{n+1}\right\Vert ^{2}+R_{e}^{-1}\left\Vert Ae_{\boldsymbol{u}}^{n+1}\right\Vert ^{2}\nonumber \\
 & =\left(R_{\boldsymbol{u}}^{n+1},Ae_{\boldsymbol{u}}^{n+1}+\delta_{t}e_{\boldsymbol{u}}^{n+1}\right)+\kappa\exp\left(\frac{t^{n+1}}{T}\right)\left(q^{n+1}\boldsymbol{J}^{n}\times\boldsymbol{B}^{n+1}-q\left(t^{n+1}\right)\boldsymbol{J}\left(t^{n+1}\right)\times\boldsymbol{B}^{n+1},Ae_{\boldsymbol{u}}^{n+1}+\delta_{t}e_{\boldsymbol{u}}^{n+1}\right)\nonumber \\
 & \quad+\exp\left(\frac{t^{n+1}}{T}\right)\left(q\left(t^{n+1}\right)\boldsymbol{u}\left(t^{n+1}\right)\cdot\nabla\boldsymbol{u}\left(t^{n+1}\right)-q^{n+1}\boldsymbol{u}^{n}\cdot\nabla\boldsymbol{u}^{n},Ae_{\boldsymbol{u}}^{n+1}+\delta_{t}e_{\boldsymbol{u}}^{n+1}\right)\nonumber \\
 & \coloneqq\sum_{i=1}^{3}{\rm I}_{i}.\label{eq:e_error_Au}
\end{align}

For term ${\rm I}_{1}$, we use Cauchy-Schwarz and Young inequality
to estimate it as
\begin{equation}
\left(R_{\boldsymbol{u}}^{n+1},Ae_{\boldsymbol{u}}^{n+1}+\delta_{t}e_{\boldsymbol{u}}^{n+1}\right)\leq\frac{1}{12}\left\Vert \delta_{t}e_{\boldsymbol{u}}^{n+1}\right\Vert ^{2}+\frac{\nu}{24}\left\Vert Ae_{\boldsymbol{u}}^{n+1}\right\Vert ^{2}+C\tau\int_{t^{n}}^{t^{n+1}}\left\Vert \boldsymbol{u}_{tt}\right\Vert ^{2}dt.\label{eq:e_error_Au_Ru}
\end{equation}

For term ${\rm I}_{2}$, using Hölder inequality and Young inequality,
we obtain
\begin{align}
{\rm I}_{2} & =\exp\left(\frac{t^{n+1}}{T}\right)e_{q}^{n+1}\left(\boldsymbol{J}^{n}\times\boldsymbol{B}^{n+1},Ae_{\boldsymbol{u}}^{n+1}+\delta_{t}e_{\boldsymbol{u}}^{n+1}\right)+\left(\left(\boldsymbol{J}^{n}-\boldsymbol{J}\left(t^{n+1}\right)\right)\times\boldsymbol{B}^{n+1},Ae_{\boldsymbol{u}}^{n+1}+\delta_{t}e_{\boldsymbol{u}}^{n+1}\right)\nonumber \\
 & =\exp\left(\frac{t^{n+1}}{T}\right)e_{q}^{n+1}\left(\boldsymbol{J}^{n}\times\boldsymbol{B}^{n+1},Ae_{\boldsymbol{u}}^{n+1}+\delta_{t}e_{\boldsymbol{u}}^{n+1}\right)+\left(e_{\boldsymbol{J}}^{n}\times\boldsymbol{B}^{n+1},Ae_{\boldsymbol{u}}^{n+1}+\delta_{t}e_{\boldsymbol{u}}^{n+1}\right)\nonumber \\
 & \quad+\left(\left(\boldsymbol{J}\left(t^{n}\right)-\boldsymbol{J}\left(t^{n+1}\right)\right)\times\boldsymbol{B}^{n+1},Ae_{\boldsymbol{u}}^{n+1}+\delta_{t}e_{\boldsymbol{u}}^{n+1}\right)\nonumber \\
 & \le\exp(1)\left|e_{q}^{n+1}\right|\left\Vert \boldsymbol{J}^{n}\right\Vert \left\Vert \boldsymbol{B}^{n+1}\right\Vert _{0,\infty}\left\Vert Ae_{\boldsymbol{u}}^{n+1}+\delta_{t}e_{\boldsymbol{u}}^{n+1}\right\Vert +\left\Vert e_{\boldsymbol{J}}^{n}\right\Vert \left\Vert \boldsymbol{B}^{n+1}\right\Vert _{0,\infty}\left\Vert Ae_{\boldsymbol{u}}^{n+1}+\delta_{t}e_{\boldsymbol{u}}^{n+1}\right\Vert \nonumber \\
 & \quad+\left\Vert \int_{t^{n}}^{t^{n+1}}\boldsymbol{J}_{t}{\rm d}t\right\Vert \left\Vert \boldsymbol{B}^{n+1}\right\Vert _{0,\infty}\left\Vert Ae_{\boldsymbol{u}}^{n+1}+\delta_{t}e_{\boldsymbol{u}}^{n+1}\right\Vert \nonumber \\
 & \le\frac{1}{6}\left\Vert \delta_{t}e_{\boldsymbol{u}}^{n+1}\right\Vert ^{2}+\frac{\nu}{12}\left\Vert Ae_{\boldsymbol{u}}^{n+1}\right\Vert ^{2}+C\left(\tau+\left\Vert \boldsymbol{J}\left(t^{n+1}\right)\right\Vert ^{2}\right)\left\Vert \boldsymbol{B}^{n+1}\right\Vert _{0,\infty}^{2}\left|e_{q}^{n+1}\right|\nonumber \\
 & \quad+C\left\Vert \boldsymbol{B}^{n+1}\right\Vert _{0,\infty}^{2}\left\Vert e_{\boldsymbol{J}}^{n}\right\Vert ^{2}+C\tau\left\Vert \boldsymbol{B}^{n+1}\right\Vert _{0,\infty}^{2}\int_{t^{n}}^{t^{n+1}}\left\Vert \boldsymbol{J}_{t}\right\Vert ^{2}{\rm d}t.\label{eq:e_error_Au_J}
\end{align}

For term ${\rm I}_{3}$, we rearrange it as follows
\begin{equation}
\begin{aligned}{\rm I}_{3} & =-\exp\left(\frac{t^{n+1}}{T}\right)e_{q}^{n+1}\left(\boldsymbol{u}^{n}\cdot\nabla\boldsymbol{u}^{n},Ae_{\boldsymbol{u}}^{n+1}+\delta_{t}e_{\boldsymbol{u}}^{n+1}\right)+\left(\left(\boldsymbol{u}\left(t^{n+1}\right)-\boldsymbol{u}^{n}\right)\cdot\nabla\boldsymbol{u}\left(t^{n+1}\right),Ae_{\boldsymbol{u}}^{n+1}+\delta_{t}e_{\boldsymbol{u}}^{n+1}\right)\\
 & \quad+\left(\boldsymbol{u}^{n}\cdot\nabla\left(\boldsymbol{u}\left(t^{n+1}\right)-\boldsymbol{u}^{n}\right),Ae_{\boldsymbol{u}}^{n+1}+\delta_{t}e_{\boldsymbol{u}}^{n+1}\right)\\
 & \coloneqq\sum_{i=1}^{3}{\rm I}_{3,i}.
\end{aligned}
\label{eq:e_error_Au_nonlinear1}
\end{equation}
Term ${\rm I}_{3,1}$ can be bounded by using (\ref{eq:e_trilinear_form2d1}),
(\ref{eq:e_trilinear_form4}) and (\ref{eq:gradu}), the first term
on the right hand side of (\ref{eq:e_error_Au_nonlinear1}) can be
bounded by 
\begin{align}
{\rm I}_{3,1} & =-\exp\left(\frac{t^{n+1}}{T}\right)e_{q}^{n+1}\left(\boldsymbol{u}^{n}\cdot\nabla e_{\boldsymbol{u}}^{n},Ae_{\boldsymbol{u}}^{n+1}+\delta_{t}e_{\boldsymbol{u}}^{n+1}\right)-\exp\left(\frac{t^{n+1}}{T}\right)e_{q}^{n+1}\left((\boldsymbol{u}^{n}\cdot\nabla\boldsymbol{u}(t^{n}),Ae_{\boldsymbol{u}}^{n+1}+\delta_{t}e_{\boldsymbol{u}}^{n+1}\right)\nonumber \\
 & \leq\exp(1)C_{b,7}|e_{q}^{n+1}|\left\Vert \boldsymbol{u}^{n}\right\Vert ^{1/2}\left\Vert \nabla\boldsymbol{u}^{n}\right\Vert ^{1/2}\left\Vert \nabla e_{\boldsymbol{u}}^{n}\right\Vert ^{1/2}\left\Vert Ae_{\boldsymbol{u}}^{n}\right\Vert ^{1/2}\left\Vert Ae_{\boldsymbol{u}}^{n+1}+\delta_{t}e_{\boldsymbol{u}}^{n+1}\right\Vert \nonumber \\
 & \quad+\exp(1)C_{b,5}|e_{q}^{n+1}|\left\Vert \nabla\boldsymbol{u}^{n}\right\Vert \left\Vert \boldsymbol{u}(t^{n})\right\Vert _{2}\left\Vert Ae_{\boldsymbol{u}}^{n+1}+\delta_{t}e_{\boldsymbol{u}}^{n+1}\right\Vert \nonumber \\
 & \leq\frac{1}{12}\left\Vert \delta_{t}e_{\boldsymbol{u}}^{n+1}\right\Vert ^{2}+\frac{\nu}{24}\left\Vert Ae_{\boldsymbol{u}}^{n+1}\right\Vert ^{2}+\frac{\nu}{8}\left\Vert Ae_{\boldsymbol{u}}^{n}\right\Vert ^{2}\nonumber \\
 & \quad+C\left(\tau+\left\Vert \nabla\boldsymbol{u}(t^{n})\right\Vert ^{2}\right)\left\Vert \nabla e_{\boldsymbol{u}}^{n}\right\Vert ^{2}+C\left(\tau+\left\Vert \nabla\boldsymbol{u}(t^{n})\right\Vert ^{2}\right)\left\Vert \boldsymbol{u}(t^{n})\right\Vert _{2}^{2}|e_{q}^{n+1}|^{2}.\label{eq:e_error_Au_nonlinear2}
\end{align}
Similarly, term ${\rm I}_{3,2}$ can be estimated by 
\begin{align}
{\rm I}_{3,2} & \leq C_{b,5}\left\Vert \nabla\boldsymbol{u}(t^{n+1})-\nabla\boldsymbol{u}^{n}\right\Vert \left\Vert \boldsymbol{u}(t^{n+1})\right\Vert _{2}\left\Vert Ae_{\boldsymbol{u}}^{n+1}+\delta_{t}e_{\boldsymbol{u}}^{n+1}\right\Vert \nonumber \\
 & \leq\frac{1}{12}\left\Vert \delta_{t}e_{\boldsymbol{u}}^{n+1}\right\Vert ^{2}+\frac{\nu}{24}\left\Vert Ae_{\boldsymbol{u}}^{n+1}\right\Vert ^{2}+C\left\Vert \boldsymbol{u}(t^{n+1})\right\Vert _{2}^{2}\left\Vert \nabla e_{\boldsymbol{u}}^{n}\right\Vert ^{2}+C\left\Vert \boldsymbol{u}(t^{n+1})\right\Vert _{2}^{2}\tau\int_{t^{n}}^{t^{n+1}}\left\Vert \nabla\boldsymbol{u}_{t}(s)\right\Vert ^{2}ds,\label{eq:e_error_Au_nonlinear3}
\end{align}
For term ${\rm I}_{3,3}$, we deduce that
\begin{align}
{\rm I}_{3,3} & =\left(\boldsymbol{u}^{n}\cdot\nabla(\boldsymbol{u}(t^{n+1})-\boldsymbol{u}(t^{n})),Ae_{\boldsymbol{u}}^{n+1}+\delta_{t}e_{\boldsymbol{u}}^{n+1}\right)-\left(\boldsymbol{u}^{n}\cdot\nabla e_{\boldsymbol{u}}^{n},Ae_{\boldsymbol{u}}^{n+1}+\delta_{t}e_{\boldsymbol{u}}^{n+1}\right)\nonumber \\
 & \leq C_{b,5}\left\Vert \nabla\boldsymbol{u}^{n}\right\Vert \left\Vert \boldsymbol{u}(t^{n+1})-\boldsymbol{u}(t^{n})\right\Vert _{2}\left\Vert Ae_{\boldsymbol{u}}^{n+1}+\delta_{t}e_{\boldsymbol{u}}^{n+1}\right\Vert \nonumber \\
 & \quad+C_{b,6}\left\Vert \boldsymbol{u}^{n}\right\Vert ^{1/2}\left\Vert \nabla\boldsymbol{u}^{n}\right\Vert ^{1/2}\left\Vert \nabla e_{\boldsymbol{u}}^{n}\right\Vert ^{1/2}\left\Vert Ae_{\boldsymbol{u}}^{n}\right\Vert ^{1/2}\left\Vert Ae_{\boldsymbol{u}}^{n+1}+\delta_{t}e_{\boldsymbol{u}}^{n+1}\right\Vert \nonumber \\
 & \leq\frac{1}{12}\left\Vert \delta_{t}e_{\boldsymbol{u}}^{n+1}\right\Vert ^{2}+\frac{\nu}{24}\left\Vert Ae_{\boldsymbol{u}}^{n+1}\right\Vert ^{2}+C\left(\tau+\left\Vert \nabla\boldsymbol{u}(t^{n})\right\Vert ^{2}\right)\left\Vert \nabla e_{\boldsymbol{u}}^{n}\right\Vert ^{2}\nonumber \\
 & \quad+\frac{\nu}{8}\left\Vert Ae_{\boldsymbol{u}}^{n}\right\Vert ^{2}+C\tau\left(\tau+\left\Vert \nabla\boldsymbol{u}(t^{n})\right\Vert ^{2}\right)\int_{t^{n}}^{t^{n+1}}\left\Vert \boldsymbol{u}_{t}(s)\right\Vert _{2}^{2}ds.\label{eq:e_error_Au_nonlinear4}
\end{align}
Combining (\ref{eq:e_error_Au}) with (\ref{eq:e_error_Au_Ru})-(\ref{eq:e_error_Au_nonlinear1}),
we have 
\begin{align}
 & (1+R_{e}^{-1})\frac{\left\Vert \nabla e_{\boldsymbol{u}}^{n+1}\right\Vert ^{2}-\left\Vert \nabla e_{\boldsymbol{u}}^{n}\right\Vert ^{2}+\left\Vert \nabla e_{\boldsymbol{u}}^{n+1}-\nabla e_{\boldsymbol{u}}^{n}\right\Vert ^{2}}{2\tau}+\left\Vert \delta_{t}e_{\boldsymbol{u}}^{n+1}\right\Vert ^{2}+R_{e}^{-1}\left\Vert Ae_{\boldsymbol{u}}^{n+1}\right\Vert ^{2}\nonumber \\
 & =\frac{R_{e}^{-1}}{4}\left\Vert Ae_{\boldsymbol{u}}^{n}\right\Vert ^{2}+C\left(1+\tau+\left\Vert \nabla\boldsymbol{u}(t^{n})\right\Vert ^{2}\right)\left(\left\Vert \nabla e_{\boldsymbol{u}}^{n}\right\Vert ^{2}+|e_{q}^{n+1}|^{2}\right)\nonumber \\
 & \quad+C\tau\int_{t^{n}}^{t^{n+1}}\left(\left\Vert \boldsymbol{u}_{t}(s)\right\Vert _{2}^{2}ds+\left\Vert \nabla\boldsymbol{u}_{t}(s)\right\Vert ^{2}+\left\Vert \boldsymbol{u}_{tt}(s)\right\Vert ^{2}\right)ds.\label{eq:e_error_Au_final1}
\end{align}
Multiplying (\ref{eq:e_error_Au_final1}) by $2\tau$ and summing
over $n$ from 0 to $m$, and applying the discrete Gronwall inequality
in Lemma \ref{lem: gronwall2}, we obtain 
\begin{align}
 & \left\Vert \nabla e_{\boldsymbol{u}}^{m+1}\right\Vert ^{2}+\tau\sum\limits _{n=0}^{m}\left\Vert \delta_{t}e_{\boldsymbol{u}}^{n+1}\right\Vert ^{2}+\tau R_{e}^{-1}\sum\limits _{n=0}^{m}\left\Vert Ae_{\boldsymbol{u}}^{n+1}\right\Vert ^{2}\nonumber \\
 & \le C\left(1+\tau+\left\Vert \nabla\boldsymbol{u}(t^{n})\right\Vert ^{2}\right)\tau\sum\limits _{n=0}^{m}\left(\left\Vert \nabla e_{\boldsymbol{u}}^{n}\right\Vert ^{2}+|e_{q}^{n+1}|^{2}\right)+C\tau^{2}.\label{eq:e_error_Au_final2}
\end{align}
Combining the above estimate with Theorem \ref{eq:estuJq}, we obtain
the desired result.
\end{proof}
\medskip{}

We are now in position to prove the pressure estimate. 
\begin{thm}
\label{thm:error_estimate_p}Assuming $\boldsymbol{u}\in H^{2}(0,T;\boldsymbol{H}^{2}(\Omega))\bigcap H^{1}(0,T;\boldsymbol{H}^{2}(\Omega))\bigcap L^{\infty}(0,T;\boldsymbol{H}^{2}(\Omega))$,
$\boldsymbol{J}\in L^{\infty}(0,T;\boldsymbol{L}^{2}(\Omega))\cap H^{1}(0,T;\boldsymbol{L}^{2}(\Omega))$
and $\boldsymbol{B}\in L^{\infty}(0,T;\boldsymbol{L}^{\infty}(\Omega))$,
then we have the following error estimate for $\ 0\leq m\leq N-1$,
\begin{equation}
\tau\sum_{n=0}^{m}\left\Vert e_{p}^{n+1}\right\Vert ^{2}\le C\tau^{2}.\label{eq:est_p}
\end{equation}
\end{thm}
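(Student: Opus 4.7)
The plan is to recover the pressure error from the momentum error equation (\ref{eq:erroru}) via the inf-sup condition for the Stokes pair $(\boldsymbol{X},Y)$, namely
\[
\beta_s\,\|e_p^{n+1}\| \;\le\; \sup_{\boldsymbol{v}\in\boldsymbol{X}\setminus\{\boldsymbol{0}\}}\frac{(e_p^{n+1},\nabla\cdot\boldsymbol{v})}{\|\nabla\boldsymbol{v}\|}.
\]
Testing (\ref{eq:erroru}) against an arbitrary $\boldsymbol{v}\in\boldsymbol{X}$ and integrating by parts gives
\[
(e_p^{n+1},\nabla\cdot\boldsymbol{v})=(\delta_t e_{\boldsymbol{u}}^{n+1},\boldsymbol{v})+R_e^{-1}(\nabla e_{\boldsymbol{u}}^{n+1},\nabla\boldsymbol{v})-(R_{\boldsymbol{u}}^{n+1},\boldsymbol{v})+\mathcal{N}^{n+1}(\boldsymbol{v})+\mathcal{L}^{n+1}(\boldsymbol{v}),
\]
where $\mathcal{N}^{n+1}$ and $\mathcal{L}^{n+1}$ collect the convective and Lorentz-coupling error contributions. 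The three explicit linear terms are immediate: each is bounded by $C\,\|\nabla\boldsymbol{v}\|$ times, respectively, $\|\delta_t e_{\boldsymbol{u}}^{n+1}\|$, $\|\nabla e_{\boldsymbol{u}}^{n+1}\|$, and $\|R_{\boldsymbol{u}}^{n+1}\|_{-1}$ (using Poincaré to convert $\|\boldsymbol{v}\|\le C\|\nabla\boldsymbol{v}\|$), and the first two are $O(\tau)$-summable in $\tau\sum(\cdot)^2$ thanks to Lemma~\ref{lem:est_du}, while the truncation piece contributes at most $\tau^2\!\int_{t^n}^{t^{n+1}}\!\|\boldsymbol{u}_{tt}\|_{-1}^2\,dt$.

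The main work is to treat $\mathcal{N}^{n+1}$ and $\mathcal{L}^{n+1}$ with the same decomposition as in Lemma~\ref{lem:erroru} but now paired against a generic $\boldsymbol{v}$ instead of $e_{\boldsymbol{u}}^{n+1}$. For the convective term I would write
\[
q(t^{n+1})\boldsymbol{u}(t^{n+1})\!\cdot\!\nabla\boldsymbol{u}(t^{n+1})-q^{n+1}\boldsymbol{u}^n\!\cdot\!\nabla\boldsymbol{u}^n
\]
as a sum of an $e_q^{n+1}$-piece, a $(\boldsymbol{u}(t^{n+1})-\boldsymbol{u}^n)\!\cdot\!\nabla\boldsymbol{u}(t^{n+1})$-piece, and a $\boldsymbol{u}^n\!\cdot\!\nabla(\boldsymbol{u}(t^{n+1})-\boldsymbol{u}^n)$-piece, then apply the trilinear estimates (\ref{eq:e_trilinear_form0})--(\ref{eq:e_trilinear_form4}) together with the $\boldsymbol{H}^2$-regularity of the exact $\boldsymbol{u}$. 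In each case one can place $\|\nabla\boldsymbol{v}\|$ (or $\|\boldsymbol{v}\|$ followed by Poincaré) on the test function and retain a factor that is controlled in $\tau\sum(\cdot)^2$ by Theorem~\ref{thm:erroruJq} and Lemma~\ref{lem:est_du}, namely $|e_q^{n+1}|$, $\|e_{\boldsymbol{u}}^n\|$, $\|\nabla e_{\boldsymbol{u}}^n\|$, or time-integral residuals involving $\|\boldsymbol{u}_t\|_{1+s}$ and $\|\nabla\boldsymbol{u}_t\|$. For $\mathcal{L}^{n+1}(\boldsymbol{v})$ one uses the Hölder chain $|(\boldsymbol{X}\!\times\!\boldsymbol{B}^{n+1},\boldsymbol{v})|\le\|\boldsymbol{X}\|\,\|\boldsymbol{B}^{n+1}\|_{0,3}\,\|\boldsymbol{v}\|_{0,6}\le C\|\boldsymbol{X}\|\,\|\boldsymbol{B}^{n+1}\|_{0,3}\,\|\nabla\boldsymbol{v}\|$, reducing the Lorentz error to $\|e_{\boldsymbol{J}}^n\|$, $|e_q^{n+1}|$, and $\tau\!\int_{t^n}^{t^{n+1}}\!\|\boldsymbol{J}_t\|^2\,dt$.

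Dividing by $\|\nabla\boldsymbol{v}\|$, taking the supremum, squaring, and summing over $n=0,\ldots,m$, every contribution on the right reduces to one of the quantities $\tau\sum\|\delta_t e_{\boldsymbol{u}}^{n+1}\|^2$, $\tau\sum\|\nabla e_{\boldsymbol{u}}^{n+1}\|^2$, $\tau\sum\|e_{\boldsymbol{J}}^{n+1}\|^2$, $\tau\sum|e_q^{n+1}|^2$, plus a $\tau^2$-weighted integral of the Sobolev norms of $\boldsymbol{u}_{tt},\boldsymbol{u}_t,\boldsymbol{J}_t$. All of these are $O(\tau^2)$ by Theorem~\ref{thm:erroruJq} and Lemma~\ref{lem:est_du} together with the assumed regularity of the exact solution, yielding $\tau\sum_{n=0}^m\|e_p^{n+1}\|^2\le C\tau^2$. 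The principal obstacle is the bookkeeping in $\mathcal{N}^{n+1}(\boldsymbol{v})$: one must route each subterm so that the test-function norm is $\|\nabla\boldsymbol{v}\|$ and the remaining factor lands in a $\tau\sum(\cdot)^2\le C\tau^2$ bound rather than, say, in $\|\nabla e_{\boldsymbol{u}}^{n+1}\|$, which is only known pointwise to be $O(\tau^{1/2})$.
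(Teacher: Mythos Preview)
Your proposal is correct and follows essentially the same route as the paper's proof: test the momentum error equation against $\boldsymbol{v}\in\boldsymbol{X}$, decompose the convective and Lorentz contributions exactly as you describe, apply the trilinear and H\"older bounds, then invoke the inf-sup condition together with the summed estimates from Theorem~\ref{thm:erroruJq} and Lemma~\ref{lem:est_du}. The only cosmetic differences are that the paper uses $\|\boldsymbol{B}^{n+1}\|_{0,\infty}$ (available under the stated $L^\infty(L^\infty)$ assumption) rather than $\|\boldsymbol{B}^{n+1}\|_{0,3}$ in the Lorentz piece, and it absorbs the factors $\|\nabla\boldsymbol{u}^n\|^2$ and $\|\boldsymbol{J}^n\|$ appearing in the $e_q^{n+1}$-pieces via the uniform bounds (\ref{eq:gradu})--(\ref{eq:J}) established in Lemma~\ref{lem:est_du}.
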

\begin{proof}
Taking the inner product of (\ref{eq:erroru}) with $\boldsymbol{v}\in\boldsymbol{X}$,
we obtain 
\begin{align}
\left(\nabla e_{p}^{n+1},\boldsymbol{v}\right)= & -\left(\delta_{t}e_{\boldsymbol{u}}^{n+1},\boldsymbol{v}\right)+R_{e}^{-1}\left(\nabla e_{\boldsymbol{u}}^{n+1},\boldsymbol{v}\right)+\left(R_{\boldsymbol{u}}^{n+1},\boldsymbol{v}\right)\nonumber \\
 & +\exp\left(\frac{t^{n+1}}{T}\right)\left(q\left(t^{n+1}\right)\left(\boldsymbol{u}\left(t^{n+1}\right)\cdot\nabla\right)\boldsymbol{u}\left(t^{n+1}\right)-q^{n+1}\left(\boldsymbol{u}^{n}\cdot\nabla\right)\boldsymbol{u}^{n},\boldsymbol{v}\right)\nonumber \\
 & +\kappa\exp\left(\frac{t^{n+1}}{T}\right)\left(q^{n+1}\boldsymbol{J}^{n}\times\boldsymbol{B}^{n+1}-q\left(t^{n+1}\right)\boldsymbol{J}\left(t^{n+1}\right)\times\boldsymbol{B}^{n+1},\boldsymbol{v}\right)\label{eq:e_error_p1}
\end{align}
It is easy to see that the first three terms can be easily bounded
by
\begin{equation}
-\left(\delta_{t}e_{\boldsymbol{u}}^{n+1},\boldsymbol{v}\right)+R_{e}^{-1}\left(\nabla e_{\boldsymbol{u}}^{n+1},\boldsymbol{v}\right)+\left(R_{\boldsymbol{u}}^{n+1},\boldsymbol{v}\right)\le C\left(\left\Vert \delta_{t}e_{\boldsymbol{u}}^{n+1}\right\Vert +\left\Vert \nabla e_{\boldsymbol{u}}^{n+1}\right\Vert +\left\Vert R_{\boldsymbol{u}}^{n+1}\right\Vert _{-1}\right)\left\Vert \nabla\boldsymbol{v}\right\Vert .\label{eq:e_error_p0}
\end{equation}
For the forth term, by using (\ref{eq:e_trilinear_form})-(\ref{eq:e_trilinear_form2d})
and (\ref{eq:gradu}), we have that for all $\boldsymbol{v}\in\boldsymbol{X}$,
\begin{align}
 & \begin{aligned}\exp\end{aligned}
(\frac{t^{n+1}}{T})\left(q(t^{n+1})\boldsymbol{u}(t^{n+1})\cdot\nabla\boldsymbol{u}(t^{n+1})-q^{n+1}\boldsymbol{u}^{n}\cdot\nabla\boldsymbol{u}^{n},\boldsymbol{v}\right)\nonumber \\
 & =\left((\boldsymbol{u}(t^{n+1})-\boldsymbol{u}^{n})\cdot\nabla\boldsymbol{u}(t^{n+1}),\boldsymbol{v}\right)-e_{q}^{n+1}\begin{aligned}\exp\end{aligned}
(\frac{t^{n+1}}{T})\left(\boldsymbol{u}^{n}\cdot\nabla\boldsymbol{u}^{n},\boldsymbol{v}\right)+\left(\boldsymbol{u}^{n}\cdot\nabla(\boldsymbol{u}(t^{n+1})-\boldsymbol{u}^{n}),\boldsymbol{v}\right)\nonumber \\
 & \le C_{b,1}\left\Vert \nabla\boldsymbol{u}(t^{n+1})-\nabla\boldsymbol{u}^{n}\right\Vert \left\Vert \boldsymbol{u}(t^{n+1})\right\Vert _{2}\left\Vert \nabla\boldsymbol{v}\right\Vert +C\left|e_{q}^{n+1}\right|\left\Vert \nabla\boldsymbol{u}^{n}\right\Vert \left\Vert \nabla\boldsymbol{u}^{n}\right\Vert \left\Vert \nabla\boldsymbol{v}\right\Vert \nonumber \\
 & \quad+C\left\Vert \nabla\boldsymbol{u}^{n}\right\Vert \left\Vert \nabla\boldsymbol{u}(t^{n+1})-\nabla\boldsymbol{u}^{n}\right\Vert \left\Vert \nabla\boldsymbol{v}\right\Vert \nonumber \\
 & \leq C\left(\left\Vert \nabla e_{\boldsymbol{u}}^{n}\right\Vert +\left\Vert \int_{t^{n}}^{t^{n+1}}\nabla\boldsymbol{u}_{t}\mathrm{d}t\right\Vert +|e_{q}^{n+1}|\right)\left\Vert \nabla\boldsymbol{v}\right\Vert .\label{eq:e_error_p2}
\end{align}
For the last term, we invoke to estimate it as we have that for all
$\boldsymbol{v}\in\boldsymbol{X}$, 
\begin{align}
 & \kappa\exp\left(\frac{t^{n+1}}{T}\right)\left(q^{n+1}\boldsymbol{J}^{n}\times\boldsymbol{B}^{n+1}-q\left(t^{n+1}\right)\boldsymbol{J}\left(t^{n+1}\right)\times\boldsymbol{B}^{n+1},\boldsymbol{v}\right)\nonumber \\
 & =\exp\left(\frac{t^{n+1}}{T}\right)e_{q}^{n+1}\left(\boldsymbol{J}^{n}\times\boldsymbol{B}^{n+1},\boldsymbol{v}\right)+\left(\left(\boldsymbol{J}^{n}-\boldsymbol{J}\left(t^{n+1}\right)\right)\times\boldsymbol{B}^{n+1},\boldsymbol{v}\right)\nonumber \\
 & =\exp\left(\frac{t^{n+1}}{T}\right)e_{q}^{n+1}\left(\boldsymbol{J}^{n}\times\boldsymbol{B}^{n+1},\boldsymbol{v}\right)+\left(e_{\boldsymbol{J}}^{n}\times\boldsymbol{B}^{n+1},\boldsymbol{v}\right)+\left(\left(\boldsymbol{J}\left(t^{n}\right)-\boldsymbol{J}\left(t^{n+1}\right)\right)\times\boldsymbol{B}^{n+1},\boldsymbol{v}\right)\nonumber \\
 & \le\exp(1)\left|e_{q}^{n+1}\right|\left\Vert \boldsymbol{J}^{n}\right\Vert \left\Vert \boldsymbol{B}^{n+1}\right\Vert _{0,\infty}\left\Vert \boldsymbol{v}\right\Vert +\left\Vert e_{\boldsymbol{J}}^{n}\right\Vert \left\Vert \boldsymbol{B}^{n+1}\right\Vert _{0,\infty}\left\Vert \boldsymbol{v}\right\Vert +\left\Vert \int_{t^{n}}^{t^{n+1}}\boldsymbol{J}_{t}{\rm d}t\right\Vert \left\Vert \boldsymbol{B}^{n+1}\right\Vert _{0,\infty}\left\Vert \boldsymbol{v}\right\Vert \nonumber \\
 & \le C\left(\left|e_{q}^{n+1}\right|+\left\Vert e_{\boldsymbol{J}}^{n}\right\Vert +\left\Vert \int_{t^{n}}^{t^{n+1}}\boldsymbol{J}_{t}{\rm d}t\right\Vert \right)\left\Vert \nabla\boldsymbol{v}\right\Vert .\label{eq:error_p3}
\end{align}
Using Theorem \ref{thm:erroruJq}, Lemma \ref{lem:est_du} and the
inf-sup condition, 
\begin{equation}
\beta_{s}\left\Vert e_{p}^{n+1}\right\Vert \leq\sup_{\boldsymbol{v}\in\boldsymbol{X}}\frac{\left(\nabla e_{p}^{n+1},\boldsymbol{v}\right)}{\|\nabla\boldsymbol{v}\|},\label{eq:e_error_p4}
\end{equation}
we get
\begin{align*}
\tau\sum_{n=0}^{m}\left\Vert e_{p}^{n+1}\right\Vert ^{2} & \le C\tau\sum_{n=0}^{m}\left(\left\Vert \delta_{t}e_{\boldsymbol{u}}^{n+1}\right\Vert ^{2}+R_{e}^{-1}\left\Vert \nabla e_{\boldsymbol{u}}^{n+1}\right\Vert ^{2}+\left\Vert \nabla e_{\boldsymbol{u}}^{n}\right\Vert ^{2}+|e_{q}^{n+1}|^{2}+\left\Vert e_{\boldsymbol{J}}^{n}\right\Vert ^{2}\right)\\
 & \quad+C\tau^{2}\int_{0}^{t^{m+1}}\left(\left\Vert \nabla\boldsymbol{u}_{t}\right\Vert ^{2}+\left\Vert \boldsymbol{u}_{tt}\right\Vert _{-1}^{2}+\left\Vert \boldsymbol{J}_{t}\right\Vert ^{2}\right)dt\\
 & \le C\tau^{2}.
\end{align*}
The proof is complete. 
\end{proof}
\begin{rem}
\label{rem:pressu}In this paper, we only focus on designing unconditionally
energy-stable and linear SAV schemes for the inductionless MHD equations.
The velocity and pressure can be further decoupled by using the classical
pressure correction scheme \citep{Li2020b,Yang2021m,Guermond2006},
and we leave it to the interested readers.
\end{rem}

\section{Numerical experiments\label{sec:Num}}

In this section, we present a series of numerical experiments to verify
the theoretical results of the proposed schemes. The numerical experiments
are implemented on the finite element software FreeFEM \citep{Hecht2012}. 

\subsection{The fully-discrete schemes}

Although we only discussed semi-discretization in time in the previous
sections, the SAV schemes can be coupled with any compatible spatial
discretization. In this work, the spatial discretization is based
on mixed finite element method. 

Let $\mathcal{T}_{h}$ be a quasi-uniform and shape-regular tetrahedral
mesh of $\Omega$. As usual, we introduce the local mesh size $h_{K}=\mathrm{diam}\left(K\right)$
and the global mesh size $h:=\underset{K\in\mathcal{T}_{h}}{\max}h_{K}$.
For any integer $k\geq0,$ let $P_{k}(K)$ be the space of polynomials
of degree $k$ on element $K$ and define $\boldsymbol{P}_{k}(K)=P_{k}(K)^{3}$.
Following \citep{Zhang2021,Li19,Zhang2020}, we employ the Mini-element
to approximate the velocity and pressure
\[
\boldsymbol{X}_{h}=\boldsymbol{P}_{1,h}^{b}\cap\boldsymbol{X},\quad Y_{h}=\left\{ r_{h}\in H^{1}(\Omega):\left.r_{h}\right|_{K}\in P_{1}(K),\,\forall K\in\mathcal{T}_{h}\right\} \cap Y,
\]
where $P_{1,h}^{b}=\left\{ v_{h}\in C^{0}(\Omega):\left.v_{h}\right|_{K}\in P_{1}(K)\oplus{\rm span}\{\hat{b}\},\,\forall K\in\mathcal{T}_{h}\right\} $,
$\hat{b}$ is a bubble function on $K$. We choose the lowest-order Raviart-Thomas
element space given by
\[
\boldsymbol{D}_{h}=\left\{ \boldsymbol{K}_{h}\in\boldsymbol{D}:\left.\boldsymbol{K}_{h}\right|_{K}\in\boldsymbol{P}_{0}(K)+\boldsymbol{x}P_{0}(K),\,\forall K\in\mathcal{T}_{h}\right\} ,
\]
combined with the discontinuous and piece-wise constant finite element
space
\[
S_{h}=\left\{ \psi_{h}\in L^{2}(\Omega):\left.\psi_{h}\right|_{K}\in P_{0}(K),\,\forall K\in\mathcal{T}_{h}\right\} \cap S.
\]
From \citep{Gir86,Brezzi2012,John16}, the two finite element pairs,
$(\boldsymbol{X}_{h},Y_{h})$ and $(\boldsymbol{D}_{h},S_{h})$ ,
satisfy the following uniform inf-sup conditions, 
\begin{equation}
\inf_{0\ne q_{h}\in Q_{h}}\sup_{\boldsymbol{0}\ne\boldsymbol{v}_{h}\in\boldsymbol{V}_{h}}\frac{b_{s}(q_{h},\boldsymbol{v}_{h})}{\left\Vert \nabla\boldsymbol{v}_{h}\right\Vert _{0}\left\Vert q_{h}\right\Vert }\geq\beta_{s},\quad\inf_{0\ne\psi_{h}\in S_{h}}\sup_{\boldsymbol{0}\ne\boldsymbol{K}_{h}\in\boldsymbol{D}_{h}}\frac{b_{m}(\psi_{h},\boldsymbol{K}_{h})}{\left\Vert \boldsymbol{K}_{h}\right\Vert _{{\rm _{div}}}\left\Vert \psi_{h}\right\Vert }\geq\beta_{m},\label{bsbm:inf-suph}
\end{equation}
where $\beta_{s}$ and $\beta_{m}$ are constants independent of the
mesh size. 

Based on (\ref{eq:SAVEuler}), a fully discrete first-order SAV scheme
is as follows. For all $n\ge0$, we find $\left(\boldsymbol{u}_{h}^{n+1},p_{h}^{n+1},\boldsymbol{J}_{h}^{n+1},\phi_{h}^{n+1}\right)\in\boldsymbol{X}_{h}\times Y_{h}\times\boldsymbol{D}_{h}\times S_{h}$
and $q_{h}\in\mathbb{R}$ such that for all $\left(\boldsymbol{v}_{h},r_{h},\boldsymbol{K}_{h},\psi_{h}\right)\in\boldsymbol{X}_{h}\times Y_{h}\times\boldsymbol{D}_{h}\times S_{h}$,
\begin{subequations}
\begin{align}
(\delta_{t}\boldsymbol{u}_{h}^{n+1},\boldsymbol{v}_{h})+R_{e}^{-1}(\nabla\boldsymbol{u}_{h}^{n+1},\nabla\boldsymbol{v}_{h})-(p^{n+1},\nabla\cdot\boldsymbol{v}_{h})\nonumber \\
+q_{h}^{n+1}\exp\left(\frac{t^{n+1}}{T}\right)\left(\left(\boldsymbol{u}_{h}^{n}\cdot\nabla\boldsymbol{u}_{h}^{n},\boldsymbol{v}_{h}\right)-\kappa\left(\boldsymbol{J}_{h}^{n}\times\boldsymbol{B}^{n+1},\boldsymbol{v}_{h}\right)\right) & =0,\label{eq:weakhu}\\
\left(\nabla\cdot\boldsymbol{u}_{h}^{n+1},r_{h}\right) & =0,\label{eq:weakhdivu}\\
\left(\boldsymbol{J}_{h}^{n+1},\boldsymbol{K}_{h}\right)-\left(\phi_{h}^{n+1},\nabla\cdot\boldsymbol{K}_{h}\right)-q_{h}^{n+1}\exp\left(\frac{t^{n+1}}{T}\right)\left(\boldsymbol{u}_{h}^{n}\times\boldsymbol{B}^{n+1},\boldsymbol{K}_{h}\right) & =0,\label{eq:weakhJ}\\
\left(\nabla\cdot\boldsymbol{J}_{h}^{n+1},\psi_{h}\right) & =0,\label{eq:weakhdivJ}\\
\delta_{t}q_{h}^{n+1}+\frac{q_{h}^{n+1}}{T}-\exp\left(\frac{t^{n+1}}{T}\right)\nonumber \\
\left(\left(\boldsymbol{u}_{h}^{n}\cdot\nabla\boldsymbol{u}_{h}^{n},\boldsymbol{u}_{h}^{n+1}\right)-\kappa\left(\boldsymbol{u}_{h}^{n}\times\boldsymbol{B}^{n+1},\boldsymbol{J}_{h}^{n+1}\right)-\kappa\left(\boldsymbol{J}_{h}^{n}\times\boldsymbol{B}^{n+1},\boldsymbol{u}_{h}^{n+1}\right)\right) & =0.\label{eq:weakhq}
\end{align}
\label{eq:SAVEulerh}
\end{subequations}
For convenience, the init data $q_{h}^{0}=q^{0}=1$,
$\boldsymbol{u}_{h}^{0}$ is taken as the standard interpolation
of $\boldsymbol{u}^{0}$ onto $\boldsymbol{X}_{h}$, $\boldsymbol{J}_{h}^{0}$
is obtained by solving discrete problem, for all $\left(\boldsymbol{K}_{h},\psi_{h}\right)\in\boldsymbol{D}_{h}\times S_{h}$
, 

\begin{align*}
\left(\boldsymbol{J}_{h}^{0},\boldsymbol{K}_{h}\right)-\left(\phi_{h}^{0},\nabla\cdot\boldsymbol{K}_{h}\right)-\left(\boldsymbol{u}^{0}\times\boldsymbol{B}^{0},\boldsymbol{K}_{h}\right) & =0,\\
\left(\nabla\cdot\boldsymbol{J}_{h}^{0},\psi_{h}\right) & =0.
\end{align*}

Similarly, a fully discrete version of the second-order scheme (\ref{eq:SAVAB})
is as follows. For all $n\ge0$, we find $\left(\boldsymbol{u}_{h}^{n+1},p_{h}^{n+1},\boldsymbol{J}_{h}^{n+1},\phi_{h}^{n+1}\right)\in\boldsymbol{X}_{h}\times Y_{h}\times\boldsymbol{D}_{h}\times S_{h}$
and $q_{h}\in\mathbb{R}$ such that for all $\left(\boldsymbol{v}_{h},r_{h},\boldsymbol{K}_{h},\psi_{h}\right)\in\boldsymbol{X}_{h}\times Y_{h}\times\boldsymbol{D}_{h}\times S_{h}$,
\begin{subequations}
\begin{align}
(\delta_{t}^{2}\boldsymbol{u}_{h}^{n+1},\boldsymbol{v}_{h})+R_{e}^{-1}(\nabla\boldsymbol{u}_{h}^{n+1},\nabla\boldsymbol{v}_{h})-(p^{n+1},\nabla\cdot\boldsymbol{v}_{h})\nonumber \\
+q_{h}^{n+1}\exp\left(\frac{t^{n+1}}{T}\right)\left(\left(\hat{\boldsymbol{u}}_{h}^{n+1}\cdot\nabla\hat{\boldsymbol{u}}_{h}^{n+1},\boldsymbol{v}_{h}\right)-\kappa\left(\hat{\boldsymbol{J}}_{h}^{n+1}\times\boldsymbol{B}^{n+1},\boldsymbol{v}_{h}\right)\right) & =0,\label{eq:weakhuAB}\\
\left(\nabla\cdot\boldsymbol{u}_{h}^{n+1},r_{h}\right) & =0,\label{eq:weakhdivuAB}\\
\left(\boldsymbol{J}_{h}^{n+1},\boldsymbol{K}_{h}\right)-\left(\phi_{h}^{n+1},\nabla\cdot\boldsymbol{K}_{h}\right)-q_{h}^{n+1}\exp\left(\frac{t^{n+1}}{T}\right)\left(\hat{\boldsymbol{u}}_{h}^{n+1}\times\boldsymbol{B}^{n+1},\boldsymbol{K}_{h}\right) & =0,\label{eq:weakhJAB}\\
\left(\nabla\cdot\boldsymbol{J}_{h}^{n+1},\psi_{h}\right) & =0,\label{eq:weakhdivJAB}\\
\delta_{t}^{2}q_{h}^{n+1}+\frac{q_{h}^{n+1}}{T}-\exp\left(\frac{t^{n+1}}{T}\right)\nonumber \\
\left(\left(\hat{\boldsymbol{u}}_{h}^{n+1}\cdot\nabla\hat{\boldsymbol{u}}_{h}^{n+1},\boldsymbol{u}_{h}^{n+1}\right)-\kappa\left(\hat{\boldsymbol{u}}_{h}^{n+1}\times\boldsymbol{B}^{n+1},\boldsymbol{J}_{h}^{n+1}\right)-\kappa\left(\hat{\boldsymbol{J}}_{h}^{n+1}\times\boldsymbol{B}^{n+1},\boldsymbol{u}_{h}^{n+1}\right)\right) & =0.\label{eq:weakhqAB}
\end{align}
 \label{eq:SAVABh}
\end{subequations}
The init data is set by the similar way as the first-order scheme
(\ref{eq:SAVEulerh}).

Following the similar procedure as in the proof of Theorems \ref{thm:SeEnergyLaws}
and \ref{thm:SeEnergyLawsAB}, we can obtain the following stability
result.
\begin{thm}
\label{thm:SeEnergyLawsh} The schemes (\ref{eq:SAVEulerh}) and (\ref{eq:SAVABh})
are unconditionally energy stable in the sense that the following energy estimate holds for $n\ge0$,
\begin{align}
\delta_{t}\mathrm{E}_{\text{EL},h}^{n+1} & \le-R_{e}^{-1}\left\Vert \nabla\boldsymbol{u}_{h}^{n+1}\right\Vert ^{2}-\kappa\left\Vert \boldsymbol{J}_{h}^{n+1}\right\Vert ^{2}-\frac{1}{T}\left|q_{h}^{n+1}\right|^{2},\label{eq:SemiEnh}\\
\delta_{t}\mathrm{E}_{\text{BDF},h}^{n+1} & \le-R_{e}^{-1}\left\Vert \nabla\boldsymbol{u}_{h}^{n+1}\right\Vert ^{2}-\kappa\left\Vert \boldsymbol{J}_{h}^{n+1}\right\Vert ^{2}-\frac{1}{T}\left|q_{h}^{n+1}\right|^{2},\label{eq:SemiABh}
\end{align}
where 
\begin{align*}
 & \mathrm{E}_{\text{EL},h}^{n+1}:=\frac{1}{2}\left\Vert \boldsymbol{u}_{h}^{n+1}\right\Vert ^{2}+\frac{1}{2}\left|q_{h}^{n+1}\right|^{2},\\
 & \mathrm{E}_{\text{BDF},h}^{n+1}:=\frac{1}{4}\left(\left\Vert \boldsymbol{u}_{h}^{n+1}\right\Vert ^{2}+\left\Vert 2\boldsymbol{u}_{h}^{n+1}-\boldsymbol{u}_{h}^{n}\right\Vert ^{2}\right)+\frac{1}{4}\left(\left\Vert q_{h}^{n+1}\right\Vert ^{2}+\left\Vert 2q_{h}^{n+1}-q_{h}^{n}\right\Vert ^{2}\right).
\end{align*}
\end{thm}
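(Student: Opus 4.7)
The plan is to mimic the semi-discrete proofs of Theorems \ref{thm:SeEnergyLaws} and \ref{thm:SeEnergyLawsAB} directly in the Galerkin setting. The whole argument rests on the fact that the SAV design causes the convective and the two coupling contributions produced by the momentum and current-density equations to cancel exactly with those produced by the scalar $q$-equation, so no use of the continuous skew-symmetry relation is needed; this cancellation survives intact on any conforming subspaces as long as the test functions are chosen appropriately. The only place where the Galerkin structure is actually exploited is in disposing of the pressure and electric-potential terms.

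For the first-order scheme (\ref{eq:SAVEulerh}), the concrete steps I would carry out are: take $\boldsymbol{v}_h=\boldsymbol{u}_h^{n+1}$ in (\ref{eq:weakhu}), $r_h=p_h^{n+1}$ in (\ref{eq:weakhdivu}), $\boldsymbol{K}_h=\kappa\boldsymbol{J}_h^{n+1}$ in (\ref{eq:weakhJ}), $\psi_h=\kappa\phi_h^{n+1}$ in (\ref{eq:weakhdivJ}), and multiply (\ref{eq:weakhq}) by $q_h^{n+1}$. Since $p_h^{n+1}\in Y_h$ and $\phi_h^{n+1}\in S_h$, the pressure term $-(p_h^{n+1},\nabla\cdot\boldsymbol{u}_h^{n+1})$ and the potential term $-\kappa(\phi_h^{n+1},\nabla\cdot\boldsymbol{J}_h^{n+1})$ both vanish identically by (\ref{eq:weakhdivu}) and (\ref{eq:weakhdivJ}). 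Applying the identity $2(a-b,a)=a^2-b^2+(a-b)^2$ to the $\delta_t$ terms converts them into the telescoping form needed for $\mathrm{E}_{\text{EL},h}^{n+1}$, and summing the five resulting equalities the three trilinear/coupling terms $q_h^{n+1}\exp(t^{n+1}/T)(\boldsymbol{u}_h^n\cdot\nabla\boldsymbol{u}_h^n,\boldsymbol{u}_h^{n+1})$, $-q_h^{n+1}\exp(t^{n+1}/T)\kappa(\boldsymbol{J}_h^n\times\boldsymbol{B}^{n+1},\boldsymbol{u}_h^{n+1})$, $-q_h^{n+1}\exp(t^{n+1}/T)\kappa(\boldsymbol{u}_h^n\times\boldsymbol{B}^{n+1},\boldsymbol{J}_h^{n+1})$ cancel term-by-term against their negatives coming from the $q$-equation. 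Dropping the nonnegative remainders $\|\boldsymbol{u}_h^{n+1}-\boldsymbol{u}_h^n\|^2/(2\tau)$ and $|q_h^{n+1}-q_h^n|^2/(2\tau)$ yields (\ref{eq:SemiEnh}).

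For the second-order scheme (\ref{eq:SAVABh}) the outline is identical, with only two modifications: the BDF2 identity $2(3a-4b+c,a)=|a|^2-|b|^2+|2a-b|^2-|2b-c|^2+|a-2b+c|^2$ replaces the one above to handle $\delta_t^2$, and the nonlinear factors in both the vector equations and the $q$-equation are the same explicit extrapolates $\hat{\boldsymbol{u}}_h^{n+1}$, $\hat{\boldsymbol{J}}_h^{n+1}$, so the three-way cancellation is again exact. Discarding the nonnegative quadratic surpluses $\|\boldsymbol{u}_h^{n+1}-2\boldsymbol{u}_h^n+\boldsymbol{u}_h^{n-1}\|^2/(4\tau)$ and $|q_h^{n+1}-2q_h^n+q_h^{n-1}|^2/(4\tau)$ delivers (\ref{eq:SemiABh}).

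I do not anticipate any genuine obstacle; the only place where the discrete setting could in principle cause trouble is the elimination of pressure and electric potential, and this is handled cleanly because the discrete divergence constraints (\ref{eq:weakhdivu})--(\ref{eq:weakhdivJ}) are tested against the whole spaces $Y_h$ and $S_h$ containing $p_h^{n+1}$ and $\phi_h^{n+1}$ respectively. Consequently the argument is essentially a bookkeeping exercise, which is presumably why the authors refer to the earlier proofs rather than reproducing it.
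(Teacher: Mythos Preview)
Your proposal is correct and follows essentially the same route as the paper: the authors also set $(\boldsymbol{v}_h,r_h,\boldsymbol{K}_h,\psi_h)=(\boldsymbol{u}_h^{n+1},p_h^{n+1},\kappa\boldsymbol{J}_h^{n+1},\phi_h^{n+1})$, multiply the $q$-equation by $q_h^{n+1}$, and add, referring back to Theorems~\ref{thm:SeEnergyLaws} and~\ref{thm:SeEnergyLawsAB} for the identical manipulations. Your observation that the cancellation of the convective and coupling terms is purely algebraic and requires no skew-symmetry at the discrete level is exactly the mechanism, and your handling of the pressure and potential via the discrete divergence constraints is the only place the Galerkin structure enters, just as you note.
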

\begin{proof}
Setting $\left(\boldsymbol{v}_{h},r_{h},\boldsymbol{K}_{h},\psi_{h}\right)=\left(\boldsymbol{u}_{h}^{n+1},p_{h}^{n+1},\kappa\boldsymbol{J}_{h}^{n+1},\phi_{h}^{n+1}\right)$
in (\ref{eq:SAVEulerh}), multiplying (\ref{weakh:q}) by $q^{n+1}$,
and adding the resulting equations yields (\ref{eq:SemiEnh}). The
estimate (\ref{eq:SemiABh}) can be proved in a similar way and we
omit it.
\end{proof}
\medskip{}

By using the non-local and scalar property of the auxiliary variable
$q_{h}^{n+1}$, we can carry out the fully-discrete schemes (\ref{eq:SAVEulerh})
and (\ref{eq:SAVABh}) efficiently as the semi-discrete schemes (\ref{eq:SAVEuler})
and (\ref{eq:SAVAB}) in Section \ref{sec:Schemes}. We leave the
detailed procedures to the interested readers.

\subsection{Accuracy test}

We first verify the first- and second-order accuracy of the proposed
numerical schemes. The computational domain is set as $\Omega=\left(0,1\right)^{d},\:d=2,3$,
and the external magnetic field is $\boldsymbol{B}=\left(0,0,1\right)^{{\rm T}}$.
The physical parameters are given by $R_{e}=\kappa=1$ and the terminal
time $T=1$. The right-hand sides, the initial condition and the Dirichlet
boundary conditions are chosen so that so that the exact solution
is given by
\[
\boldsymbol{u}=\left(y\exp\left(-t\right),x\cos\left(t\right)\right),\quad p=\sin\left(t\right),\quad\boldsymbol{J}=\left(\sin\left(t\right),\cos\left(t\right)\right),\quad\phi=\cos(t),
\]

\noindent for $d=2$ and
\[
\boldsymbol{u}=\left(z\sin\left(t\right),x,y\exp\left(-t\right)\right),\quad p=0,\quad\boldsymbol{J}=\left(\cos\left(t\right),t^{2},0\right),\quad\phi=0.
\]
for $d=3$. 

Note that the exact solutions are linear or
constant in space, the only error comes from the discretization of
the time variable. We fix a mesh size with $h=1/6$ and test the convergence
rate with respect to the time step. The errors and convergence orders
are displayed in Tables \ref{tab:EL2D}-\ref{tab:AB2D} for $d=2$
and Tables \ref{tab:EL3D}-\ref{tab:AB3D} for $d=3$, respectively.
From these tables, we observe that the errors of all variable become
smaller and smaller as the time step is refined. Besides, the corresponding
convergence rates are of the order of $O(\tau)$ for the first-order
scheme and $O(\tau^{2})$ for the second-order scheme asymptotically.
This accord well with our theoretical analysis. 

\begin{table}[h]
\caption{Errors and convergence rates for the first-order scheme (\ref{eq:SAVEuler})
in 2D.\label{tab:EL2D}}

\begin{centering}
\begin{tabular}{|c|c|c|c|c|c|c|}
\hline 
$\tau$ & $\left\Vert e_{\boldsymbol{u}}^{N}\right\Vert $ & $\left\Vert \nabla e_{\boldsymbol{u}}^{N}\right\Vert $ & $\|e_{p}^{N}\|$ & $\|e_{\boldsymbol{J}}^{N}\|_{\text{div }}$ & $\|e_{\phi}^{N}\|$ & $\left|e_{q}^{N}\right|$\\
\hline 
0.2 & 2.13e-04(\textemdash ) & 1.74e-03(\textemdash ) & 3.53e-02(\textemdash ) & 7.88e-06(\textemdash ) & 2.19e-02(\textemdash ) & 1.27e-02(\textemdash )\\
\hline 
0.1 & 1.03e-04(1.06) & 8.35e-04(1.06) & 1.87e-02(0.92) & 3.51e-06(1.17) & 1.19e-02(0.88) & 4.88e-03(1.38)\\
\hline 
0.05 & 5.02e-05(1.03) & 4.09e-04(1.03) & 9.65e-03(0.96) & 1.64e-06(1.10) & 6.16e-03(0.95) & 2.01e-03(1.28)\\
\hline 
0.025 & 2.49e-05(1.02) & 2.02e-04(1.02) & 4.90e-03(0.98) & 7.88e-07(1.06) & 3.14e-03(0.97) & 8.93e-04(1.17)\\
\hline 
0.0125 & 1.24e-05(1.01) & 1.01e-04(1.01) & 2.47e-03(0.99) & 3.86e-07(1.03) & 1.58e-03(0.99) & 4.17e-04(1.10)\\
\hline 
\end{tabular}
\par\end{centering}
\caption{Errors and convergence rates for the second-order scheme (\ref{eq:SAVAB})
in 2D.\label{tab:AB2D}}

\begin{centering}
\begin{tabular}{|c|c|c|c|c|c|c|}
\hline 
$\tau$ & $\left\Vert e_{\boldsymbol{u}}^{N}\right\Vert $ & $\left\Vert \nabla e_{\boldsymbol{u}}^{N}\right\Vert $ & $\|e_{p}^{N}\|$ & $\|e_{\boldsymbol{J}}^{N}\|_{\text{div }}$ & $\|e_{\phi}^{N}\|$ & $\left|e_{q}^{N}\right|$\\
\hline 
0.2 & 3.33e-05(\textemdash ) & 2.71e-04(\textemdash ) & 7.23e-03(\textemdash ) & 9.06e-07(\textemdash ) & 4.54e-03(\textemdash ) & 3.51e-03(\textemdash )\\
\hline 
0.1 & 8.42e-06(1.98) & 6.86e-05(1.98) & 1.68e-03(2.10) & 2.55e-07(1.83) & 1.02e-03(2.16) & 1.01e-03(1.80)\\
\hline 
0.05 & 2.12e-06(1.99) & 1.73e-05(1.99) & 4.31e-04(1.97) & 6.42e-08(1.99) & 2.40e-04(2.08) & 2.35e-04(2.11)\\
\hline 
0.025 & 5.32e-07(2.00) & 4.33e-06(2.00) & 1.11e-04(1.96) & 1.61e-08(2.00) & 5.82e-05(2.04) & 5.42e-05(2.11)\\
\hline 
0.0125 & 1.33e-07(2.00) & 1.08e-06(2.00) & 2.81e-05(1.98) & 4.03e-09(2.00) & 1.43e-05(2.02) & 1.29e-05(2.07)\\
\hline 
\end{tabular}
\par\end{centering}
\caption{Errors and convergence rates for the first-order scheme (\ref{eq:SAVEuler})
in 3D\label{tab:EL3D}}

\begin{centering}
\begin{tabular}{|c|c|c|c|c|c|c|}
\hline 
$\tau$ & $\left\Vert e_{\boldsymbol{u}}^{N}\right\Vert $ & $\left\Vert \nabla e_{\boldsymbol{u}}^{N}\right\Vert $ & $\|e_{p}^{N}\|$ & $\|e_{\boldsymbol{J}}^{N}\|_{\text{div }}$ & $\|e_{\phi}^{N}\|$ & $\left|e_{q}^{N}\right|$\\
\hline 
0.2 & 4.21e-04(\textemdash ) & 3.90e-03(\textemdash ) & 1.56e-01(\textemdash ) & 2.43e-02(\textemdash ) & 5.66e-02(\textemdash ) & 1.32e-01(\textemdash )\\
\hline 
0.1 & 1.99e-04(1.08) & 1.85e-03(1.08) & 7.67e-02(1.02) & 1.62e-02(0.58) & 3.08e-02(0.88) & 6.90e-02(0.93)\\
\hline 
0.05 & 9.93e-05(1.01) & 9.26e-04(1.00) & 3.78e-02(1.02) & 9.06e-03(0.84) & 1.60e-02(0.95) & 3.51e-02(0.97)\\
\hline 
0.025 & 4.99e-05(0.99) & 4.67e-04(0.99) & 1.88e-02(1.01) & 4.76e-03(0.93) & 8.13e-03(0.97) & 1.77e-02(0.99)\\
\hline 
0.0125 & 2.51e-05(0.99) & 2.35e-04(0.99 ) & 9.34e-03(1.01 ) & 2.44e-03(0.97) & 4.10e-03(0.99) & 8.89e-03(0.99)\\
\hline 
\end{tabular}
\par\end{centering}
\caption{Errors and convergence rates for the second-order scheme (\ref{eq:SAVAB})
in 3D.\label{tab:AB3D}}

\centering{}%
\begin{tabular}{|c|c|c|c|c|c|c|}
\hline 
$\tau$ & $\left\Vert e_{\boldsymbol{u}}^{N}\right\Vert $ & $\left\Vert \nabla e_{\boldsymbol{u}}^{N}\right\Vert $ & $\|e_{p}^{N}\|$ & $\|e_{\boldsymbol{J}}^{N}\|_{\text{div }}$ & $\|e_{\phi}^{N}\|$ & $\left|e_{q}^{N}\right|$\\
\hline 
0.2 & 2.38e-04(\textemdash ) & 2.24e-03(\textemdash ) & 6.03e-02(\textemdash ) & 3.04e-02(\textemdash ) & 3.43e-02(\textemdash ) & 5.84e-02(\textemdash )\\
\hline 
0.1 & 6.26e-05(1.93) & 5.89e-04(1.93) & 1.54e-02(1.97) & 7.86e-03(1.95) & 8.93e-03(1.94) & 1.53e-02(1.93)\\
\hline 
0.05 & 1.60e-05(1.97) & 1.50e-04(1.97) & 3.90e-03(1.98) & 2.00e-03(1.97) & 2.28e-03(1.97) & 3.90e-03(1.97)\\
\hline 
0.025 & 4.03e-06(1.98) & 3.80e-05(1.98) & 9.81e-04(1.99) & 5.05e-04(1.99) & 5.74e-04(1.99) & 9.85e-04(1.99)\\
\hline 
0.0125 & 1.01e-06(1.99) & 9.54e-06(1.99) & 2.46e-04(2.00) & 1.27e-04(1.99) & 1.44e-04(1.99) & 2.47e-04(1.99)\\
\hline 
\end{tabular}
\end{table}

\subsection{Stability test}

This example is devoted to test the stability of the of the proposed
SAV schemes. We set the computed domain to be $\Omega=\left(0,1\right)^{d},\:d=2,3$
and the external magnetic field to be $\boldsymbol{B}=\left(0,0,1\right)^{{\rm T}}$.
The physical parameters are given by $R_{e}=\kappa=20,100$ and the
terminal time $T=3$. The initial condition is chosen as 
\[
\begin{array}{l}
\boldsymbol{u}^{0}=\left(\sin\left(\pi x\right)\cos\left(\pi y\right),-\cos\left(\pi x\right)\sin\left(\pi y\right)\right).\end{array}
\]
for $d=2$ and
\begin{align*}
\boldsymbol{u}^{0} & =-\pi/2\sin\left(\pi x\right)\sin\left(\pi y\right)\sin\left(\pi z\right)\boldsymbol{\varPsi},\\
\boldsymbol{\varPsi} & =\left(\sin\left(\pi x\right)\cos\left(\pi y\right)\cos\left(\pi z\right),-2\cos\left(\pi x\right)\sin\left(\pi y\right)\cos\left(\pi z\right),\cos\left(\pi x\right)\cos\left(\pi y\right)\sin\left(\pi z\right)\right)^{{\rm T}}.
\end{align*}
for $d=3$. 

With the prescribed data, we test the energy stability of the SAV
schemes on the fixed mesh size with $h=1/150$ for 2D and $h=1/16$
for 3D. Figures. 1-2 present the time evolution of the energy for
different time steps. We observe that all energy curves decay monotonically
for all time step sizes in both 2D and 3D. This confirms that the
SAV schemes are unconditionally energy stable.

\begin{figure}
\hfill{}\subfloat[First-order SAV scheme with $R_{e}=\kappa=20$.]{\begin{centering}
\includegraphics[scale=0.6]{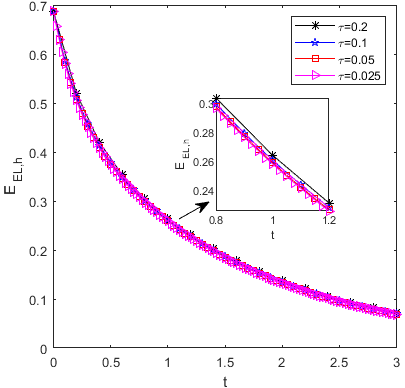}
\par\end{centering}
}\hfill{}\subfloat[First-order SAV scheme with $R_{e}=\kappa=100$.]{\begin{centering}
\includegraphics[scale=0.6]{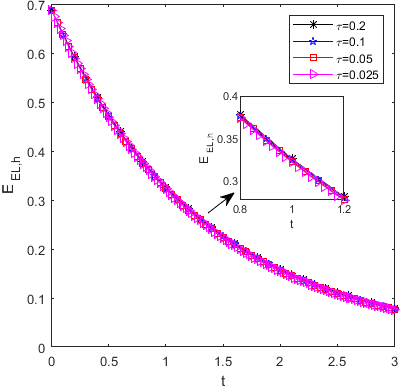}
\par\end{centering}
}\hfill{}

\hfill{}\subfloat[Second-order SAV scheme with $R_{e}=\kappa=20$.]{\begin{centering}
\includegraphics[scale=0.6]{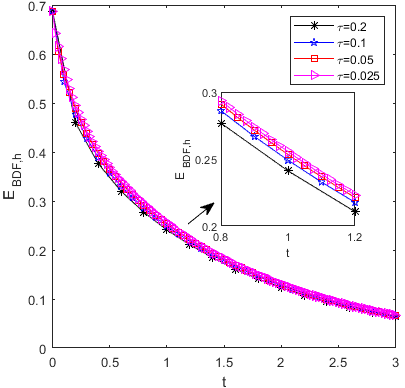}
\par\end{centering}
}\hfill{}\subfloat[Second-order SAV scheme with $R_{e}=\kappa=100$.]{\begin{centering}
\includegraphics[scale=0.6]{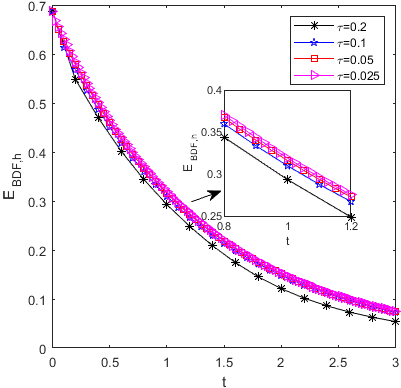}
\par\end{centering}
}\hfill{}

\caption{Time evolution of the energy for different time step sizes in 2D.\label{fig:Energy2D}}
\end{figure}

\begin{figure}
\hfill{}\subfloat[First-order SAV scheme with $R_{e}=\kappa=20$.]{\begin{centering}
\includegraphics[scale=0.6]{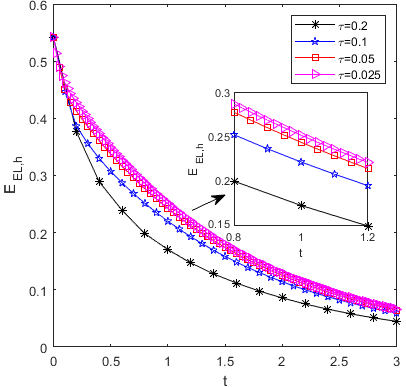}
\par\end{centering}
}\hfill{}\subfloat[First-order SAV scheme with $R_{e}=\kappa=100$.]{\begin{centering}
\includegraphics[scale=0.6]{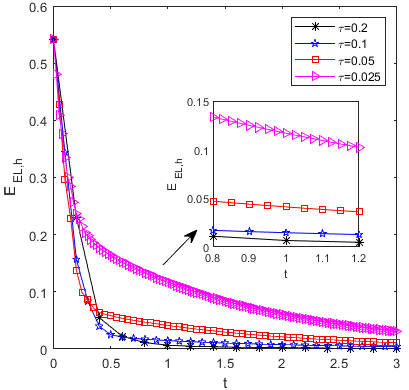}
\par\end{centering}
}\hfill{}

\hfill{}\subfloat[Second-order SAV scheme with $R_{e}=\kappa=20$.]{\begin{centering}
\includegraphics[scale=0.6]{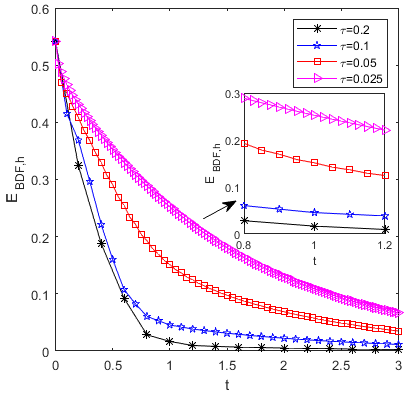}
\par\end{centering}
}\hfill{}\subfloat[Second-order SAV scheme with $R_{e}=\kappa=100$.]{\begin{centering}
\includegraphics[scale=0.6]{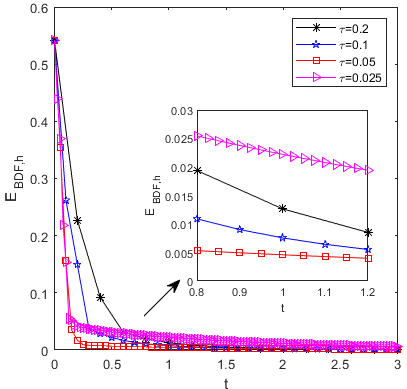}
\par\end{centering}
}\hfill{}

\caption{Time evolution of the energy for different time step sizes in 3D.\label{fig:Energy3D}}
\end{figure}

\subsection{Lid driven cavity}

In this example, we consider a well-known benchmark problem in fluid
dynamics, known as lid-cavity flow. For this end, we assume that the
cavity is a unit cubic in 3D. The physical parameters are set by $R_{e}=200,\kappa=10$,
The applied magnetic field is $\boldsymbol{B}=\left(1,0,0\right)^{{\rm T}}$and
the initial values are given by $\boldsymbol{u}_{0}=\left(g_{1},0,0\right)^{{\rm T}}$,
where $g_{1}=g_{1}(z)$ is a continuous function and satisfies
\[
g_{1}(x,y,1)=1,\quad\text{ and }\quad g_{1}(x,y,z)=0\quad\forall z\in[0,1-h],
\]
where $h$ is the mesh size. The boundary conditions are set by
\[
\boldsymbol{u}=\boldsymbol{u}_{0},\quad\phi=0\quad\text{on}\quad\Gamma.
\]
Note that our schemes apply equally to the above boundary conditions.

For this problem, we want to see how the fluid flows under the influence of the magnetic field. We perform the numerical tests by using
the second-order SAV scheme with the mesh size $h=1/32$ and
the time step $\tau=0.01$. For convenience, the terminal time $T$
is set by $T=10$. Figure \ref{fig:Lid3D} displays the streamlines
of $\boldsymbol{u}_{h}$ and the the distributions of \textbf{$\left|\boldsymbol{J}\right|_{h}$}
on the cross-section $y=0.5$ at the terminal time. It can be seen
that the structure of vortex is similar to those reported in \citep{Zhang2021,Li191}
where the steady inductionless MHD equations are considered. To investigate the formation of the final vortex, we show some snapshots of
the streamlines of $\boldsymbol{u}_{h}$ on the cross-section $y=0.5$
in Figure \ref{fig:Lid3Dt}. Our numerical results indicate that the
fluid yields more large vertices and tends to be stratified as
time evolves until the physical fields reach steady states. 

\begin{figure}
\begin{centering}
\begin{tabular}{cc}
\includegraphics[scale=0.45]{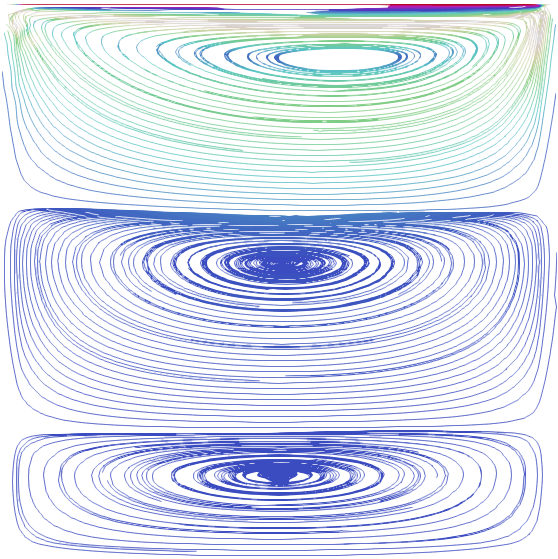} & \includegraphics[scale=0.45]{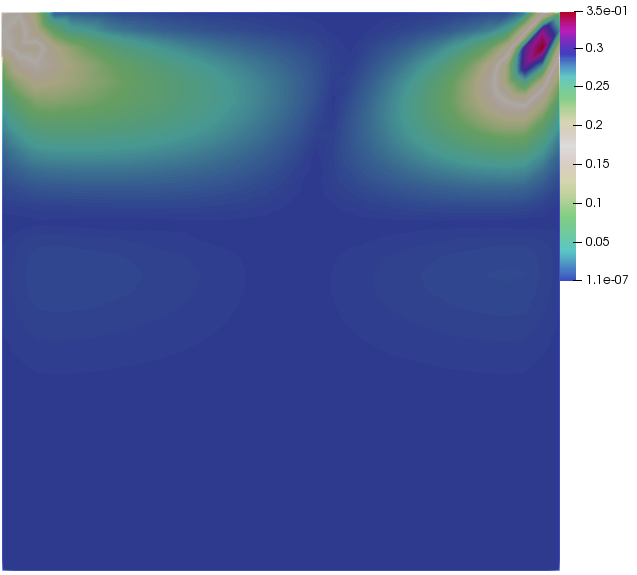}\\
\end{tabular}
\par\end{centering}
\caption{Streamlines of \textbf{$\boldsymbol{u}_{h}$} and distributions of\textbf{$\left|\boldsymbol{J}\right|_{h}$}
on the cross-section $y=0.5$ (Right).\label{fig:Lid3D}}
\end{figure}

\begin{figure}
\begin{centering}
\begin{tabular}{ccc}
\includegraphics[scale=0.36]{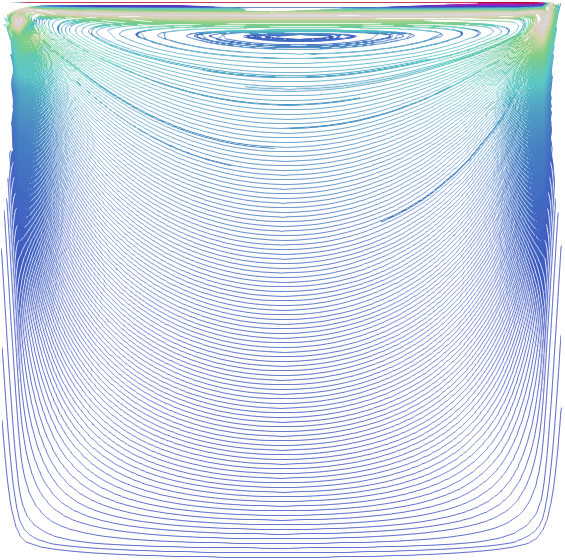} & \includegraphics[scale=0.35]{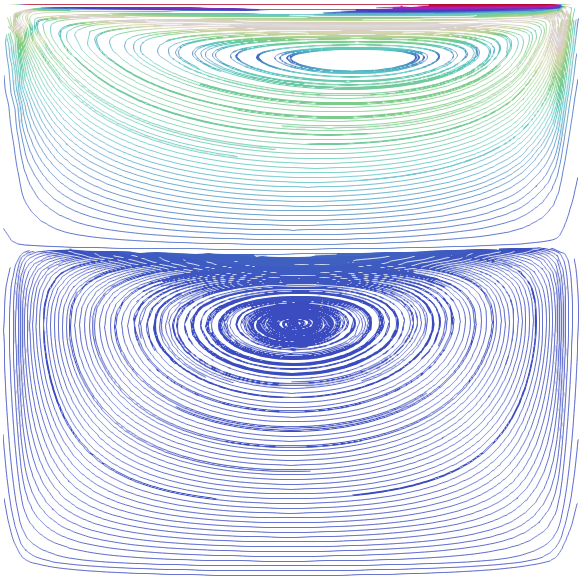} & \includegraphics[scale=0.42]{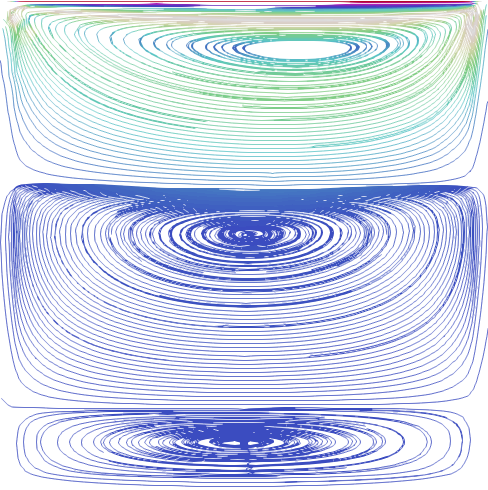}\\
\end{tabular}
\par\end{centering}
\caption{Time-evolution of streamlines of \textbf{$\boldsymbol{u}_{h}$} on
the cross-section $y=0.5$. Left: $t=0.1$, Middle: $t=1$, Right:
$t=2$.\label{fig:Lid3Dt}}
\end{figure}

\section{Concluding remarks\label{sec:Conclud}}

In this paper, we propose and analyze some SAV schemes for inductionless
MHD equations. The attractive points of these schemes are they are
decoupled, linear, unconditionally energy stable and easy to implement.
We further derive rigorous error estimates for the first-order scheme
in the two-dimensional case without any condition on the time step.
A series of numerical experiments are given to confirm the theoretical
findings and show the performances of the schemes. 

Remarkably, we only present the error analysis for the first-order
scheme in the two-dimensional case. We believe that the error estimates
can also be established for the second-order scheme in the two-dimensional
case, although the process will surely be much more tedious. However,
it appears that the error estimates can not be easily extended to
the three-dimensional case, as our proof uses essentially some inequalities
which are only valid in the two-dimensional case. In the further,
the error estimates in three dimensions will be considered. Moreover,
we have only considered time discretization in this work. Error analysis
for full discretization will be left as a subject of future endeavors.

\section*{References}

\bibliographystyle{unsrt}
\bibliography{ref}

\end{document}